\documentclass[11pt]{article}
\usepackage{fancyhdr}

\usepackage{amsmath}
\usepackage{mathtools}
\usepackage{amsfonts}
\usepackage{amsthm}
\usepackage{amssymb}
\usepackage{color}
\usepackage{dsfont}
\usepackage{geometry} 
 \newgeometry{tmargin=2.5cm, bmargin=3.6cm, lmargin=2.2cm, rmargin=2.2cm} 

\usepackage{color}

\title{Gradient estimates for problems with Orlicz growth}

\usepackage{authblk}
 
\author[$\diamond$]{Iwona Chlebicka\thanks{e-mail address: iskrzypczak@mimuw.edu.pl\\The research is supported by NCN grant no. 2016/23/D/ST1/01072.}} 
\affil[$\diamond$]{\small
Institute of Mathematics, Polish Academy of Sciences, \newline
ul. \'{S}niadeckich 8, 00-656 Warsaw, Poland\newline

Faculty of Mathematics, Informatics and Mechanics, University of Warsaw, \newline
ul. Banacha 2, 02-097 Warsaw, Poland
} 

\date{}
\begin{document}
\maketitle \sloppy

\thispagestyle{empty}

\renewcommand{\it}{\sl}
\renewcommand{\em}{\sl}

\belowdisplayskip=18pt plus 6pt minus 12pt \abovedisplayskip=18pt
plus 6pt minus 12pt
\parskip 4pt plus 1pt
\parindent 0pt

\newcommand{\barint}{
         \rule[.036in]{.12in}{.009in}\kern-.16in
          \displaystyle\int  } 
\def\R{{\mathbb{R}}}
\def\r{{\mathbb{R}}}
\def\n{{\mathbb{N}}}
\def\bu{{\bar{u}}}
\def\bg{{\bar{g}}}
\def\bG{{\bar{G}}}
\def\ba{{\bar{a}}}
\def\bv{{\bar{v}}}
\def\bmu{{\bar{\mu}}}
\def\rn{{\mathbb{R}^{n}}}
\def\rN{{\mathbb{R}^{N}}}
\def\avenorm#1{\mathchoice%
          {\mathop{\kern 0.2em\vrule width 0.6em height 0.69678ex depth -0.58065ex
                  \kern -0.545em \|{#1}\|}}%
          {\mathop{\kern 0.1em\vrule width 0.5em height 0.69678ex depth -0.60387ex
                  \kern -0.495em \|{#1}\|}}%
          {\mathop{\kern 0.1em\vrule width 0.5em height 0.69678ex depth -0.60387ex
                  \kern -0.495em \|{#1}\|}}%
          {\mathop{\kern 0.1em\vrule width 0.5em height 0.69678ex depth -0.60387ex
                  \kern -0.495em \|{#1}\|}}}

\newtheorem{theo}{\bf Theorem} 
\newtheorem{coro}{\bf Corollary}[section]
\newtheorem{lem}{\bf Lemma}[section]
\newtheorem{rem}{\bf Remark}[section]
\newtheorem{defi}{\bf Definition}[section]
\newtheorem{ex}{\bf Example}[section]
\newtheorem{fact}{\bf Fact}[section]
\newtheorem{prop}{\bf Proposition}[section]

\newcommand{\dv}{{\rm div}}
\newcommand{\wt}{\widetilde}
\newcommand{\ve}{\varepsilon}
\newcommand{\vp}{\varphi}
\newcommand{\vt}{\vartheta}
\newcommand{\gb}{{g_\bullet}}
\newcommand{\gbn}{{(\gb)_n}}
\newcommand{\vr}{\varrho}
\newcommand{\pa}{\partial}
\newcommand{\MDu}{{M^*_{0;2B_0}(|Du|)}}
\newcommand{\Mmu}{{M^*_{1;2B_0}(\mu)}}
\newcommand{\Mb}{{M^*_{0;2B}}}
\newcommand{\Mbm}{{M^*_{1;2B}}}


\parindent 1em

\begin{abstract}

We study a general nonlinear elliptic equation in the Orlicz setting with data not belonging to the dual of the energy space. We provide several Lorentz-type and Morrey-type estimates for the gradients of solutions under various conditions on the data.

\end{abstract}

\bigskip
\bigskip

  {\small {\bf Key words and phrases:} Measure data problems, Orlicz spaces, Lorentz spaces, Marcinkiewicz spaces}

\bigskip
\bigskip

{\small{\bf Mathematics Subject Classification (2010)}:  35J60, (35B65, 46E30). }

\bigskip

\section{Introduction}

Studying the elliptic equations with data on the right-hand side less regular than naturally belonging to the dual space of the leading part of the operator we fall out of the scope of the classical regularity theory and the derivation of estimates on solutions and their gradients becomes challenging. 

\smallskip

We concetrate on the problems with the leading part of the operator in the Orlicz class and provide estimates on their solutions in the scales of rearrangement invariant Lorentz-type, as well as not rearrangement invariant Morrey-type spaces, depending on the type of data. The setting is as follows.
 We investigate regularity of the solutions obtained as a limit of approximation, SOLA for short, to a~natural generalisation of $p$-Laplace problems, that is to the nonlinear elliptic equation \[-\dv\,a(x,Du)=\mu\]
 described precisely in Assumption (A) in Section~\ref{sec:results}. Let us stress here that the dependence on $x$ of $a(x,\cdot)$ is just measurable. The model example that we admitt, which comes from~\cite{Lieb91,Lieb93} and is studied e.g. in~\cite{Baroni-Riesz,DieEtt}, reads as follows
\begin{equation}
\label{intro:eq:main} -{ \dv}\left(\frac{\omega(x)g(|D u|)}{|D u|}  D u \right)=\mu\quad\text{in}\quad \Omega,
\end{equation}
 where $\Omega$ is a bounded domain in $\rn$, $n\geq 2$, $\omega:\Omega\to [c,\infty)$ is a bounded measurable and separated from zero function, $\mu$ is  a~Borel measure with finite total mass, $|\mu|(\Omega)<\infty$, while $g\in C^1(0,\infty)$ is a nonnegative, increasing, and convex function, such that
\begin{equation}\label{ig-sg}  1\leq i_g=\inf_{t>0}\frac{tg'(t)}{g(t)}\leq \sup_{t>0}\frac{tg'(t)}{g(t)}=s_g<\infty
,
\end{equation}
which in particular implies $g\in\Delta_2$.

The gradient estimates for SOLA to the problem~\eqref{intro:eq:main} relates to the regularity of local minimisers of~variational problem
\[{\cal G}(Dw)=\int_\Omega G(|D w|)\,dx\]
with $G$ being a primitive of $g$ from~\eqref{intro:eq:main}, for which $
2\leq 1+i_g= i_G\leq
s_G=1+s_g
.$

This study includes the classical case of $p$-Laplace equation\[-\Delta_p u =\mu,\] where $G(t)=G_p(t)=|t|^p$ and $i_G=s_G=p$, retrieving certain already classical results mentioned below. Note that we admit the corresponding equation with measurable coefficients. Other examples of admissible modular functions are e.g. $G(t)=G_{p,\alpha}(t)=|t|^p\log^\alpha(e+|t|)$, their multiplications and compositions.

\subsubsection*{State of the art} Regularity of solutions to elliptic differential equations of the form 
\begin{equation}
\label{intro:eq:plap}-\Delta_p u= f \in L^q
\end{equation}
is a well understood topic. The cases of quickly and slowly growing operators are highly different. Indeed, only if $p>n$ or \[\text{if}\qquad p\leq n \ \text{ and }\ q>\frac{np}{np-n+p}=(p^*)',\qquad \text{ then }\qquad f\in L^q\subset W^{-1,p'}=(W^{1,p})^*\] and~\eqref{intro:eq:plap}  can be uniquely solved in the natural energy space, which is covered by the classical regularity theory, e.g.~\cite{DiBMan,Iw}. Therefore, the effort concentrates on the more demanding case of slowly growing operators related to $p\leq n$ and small $q$. Note that then the notion of weak solution is  too restrictive to consider $L^q$-data, whereas the distributional solutions can be not unique, cf. the classical linear example~\cite{Serrin-pat}. Therefore, a special notion of~solutions keeping uniqueness has to be introduced. We consider SOLA defined in Section~\ref{ssec:SOLA}, where the other relevant notions, i.e. entropy and renormalized solutions, are also commented. Let us mention that all the results involving SOLA naturally concerns only $p>2-1/n$, since it is necessary to ensure that $u\in W^{1,1}_{loc}(\Omega)$ for arbitrary data.

Let us point out that the most subtle case, that is the conformal case of $p=n$, will not be an objective for our study and we only refer to~\cite{BDSZ,DHM2, DHM}. Further we concentrate on $p< n$.

There are known  estimates on gradients of solutions to~\eqref{intro:eq:plap} in the scales of the Lebesgue, Lorentz-type and Morrey-type spaces, depending on the type of data.  Our already classical inspiration was the result obtained by Boccardo and Galou\"et~\cite{bgSOLA-cpde} for solutions to~\eqref{intro:eq:plap} that yields
\begin{equation}
\label{BG-class}
f\in L^q\implies|Du|^{p-1}\in L^{\frac{nq}{n-q}}\quad\text{ when }\quad 2-\frac{1}n<p<n \quad\text{ and }\quad 1<q<\frac{np}{np-n+p},
\end{equation}
where the range is sharp in the scale of the Lebesgue spaces. We shall  only mention the remarkable results for the systems~\cite{DHM-p} and go back to equation~\eqref{intro:eq:plap}. The above mentioned result by Boccardo and Galou\"et was upgraded by Mingione in~\cite{min-grad-est} to cover very general rearrangement invariant function spaces. For instance, in~\cite{min-grad-est} the provided regularity is formulated in a weak-type setting of Lorentz and Marcinkiewicz spaces, i.e.
\begin{equation}
\label{Mingione-Lor}
f\in L(q,s)\implies|Du|^{p-1}\in L\left({\frac{nq}{n-q}},s\right)\ \text{ when }\ s\in(0,\infty],\ 2\leq p<n,  \text{ and }  1<q\leq\frac{np}{np-n+p}.
\end{equation}
See also~\cite{ku-min-univ, KuMi} for related estimates in the case of $2-1/n<p<2$.

On the other hand, in the study of solutions to the generalization of~\eqref{intro:eq:plap} with the Morrey data, i.e. satisfying the following density condition\[f\in L^{q,\theta}\qquad\text{if}\qquad\int_{ B_R }|f|^q\,dx\leq c R^{n-\theta}\qquad\text{with}\qquad  \theta\leq n\]
(cf. Definition~\ref{def:Mor:sp}), Mingione in~\cite{min-grad-est} states that
\begin{equation}
\label{SAM-class}
f\in L^{q,\theta}\implies|Du|^{p-1}\in L^{\frac{\theta q}{\theta-q},\theta}\quad\text{ for }\quad 2 \leq p<\theta \quad\text{ and }\quad 1<q\leq\frac{\theta p}{\theta p-\theta+p}.
\end{equation}
Note that in comparison to~\eqref{Mingione-Lor} describing the range of parameters we change $n$ to $\theta$. See the classical paper by Stampacchia~\cite{stamp} for the first proof of the linear case ($p=2$) of~\eqref{SAM-class}, \cite{adams-note} by Adams for the sharp linear version, and~\cite{min-grad-est} by Mingione for the nonlinear one ($p\geq 2$), its sharpness, and the corresponding result in the Lorentz-Morrey setting
\[f\in L^{\theta}(q,s) \implies  |Du|^{p-1}\in L^\theta\left(\frac{n q}{n-q},\frac{ns}{n-q}\right)\]
within the whole above range of parameters and including all bordeline cases.

Among vastness of gradient integrability results derived for solutions to problems with power-type growth corresponding to~\eqref{intro:eq:plap} we would like to mention e.g. those obtained in the Lorentz spaces for entropy solutions~\cite{KiLi}, in the Marcinkiewicz spaces for SOLA~\cite{BocMarcSola}, and for SOLA in multiple scales of the Lorentz and the Morrey-type in the comprehensive paper~\cite{min-grad-est}. Corresponding results for systems are provided in~\cite{KuMi-vec-npt}. Let us refer also to some other attempts~\cite{AdPh,AlCiSb1,CiMa14,BGM,DMM,DHM2,DHM,MPhuc,min07,Phuc}.

We stress that the issue of gradient estimates for $L^1$ or measure data is deeply investigated in the Sobolev setting, but little is known in the Orlicz spaces, where we want to contribute. To our best knowledge in the Orlicz setting the Marcinkiewicz estimates are restricted to~\cite{ACCZG,CGZG,CiMa}, while the Lorentz or the Morrey estimates for problems posed in the Orlicz spaces are not known yet at all.

\subsubsection*{Outline of the results  } Our aim is to provide precise analysis of the case related to $2\leq p<n$ in~\eqref{intro:eq:plap}. As expected, the role of~$p$ in the range bounds~\eqref{Mingione-Lor} and~\eqref{SAM-class} is played in the corresponding results by $i_G$ or $s_G$, whereas the role of $t\mapsto |t|^{p-1}$ is taken by $t\mapsto g(t)$. 

 The precise formulations are presented and commented  in Section~\ref{sec:results}. Let us mention here only the key accomplishments. We prove the estimates for gradients of solutions to~\eqref{intro:eq:main} having natural form taking into account various functional settings of data. All necessary definitions are given in Section~\ref{ssec:fn-sp} in Appendix.

Theorem~\ref{theo:Lorentz-est} provides the following result in the scale of the Lorentz spaces
\[\mu\in L(q,s)\ \ \implies\ \ g(|Du|)\in L\left(\frac{nq}{n-q},s\right)\quad\text{for}\quad 1<q\leq \frac{n i_G}{ns_G-n+i_G} \ \text{ and }\  s\in(0,\infty)\]
covering also the Marcinkiewicz case
\[\mu\in {\cal M}^q\quad\implies\quad g(|Du|)\in  {\cal M}^{\frac{nq}{n-q}}\qquad\text{for}\qquad 1<q\leq \frac{n i_G}{ns_G-n+i_G}.\]
Since the $p$-Laplace case concerns $i_G=s_G=p$ and $g(t)=|t|^{p-1}$,  the above results for Lebesgue's data ($q=s$) imply~\eqref{BG-class}. Moreover, for $g(t)=|t|^{p-1}\log^\beta(1+|t|)$, we recover \cite[Example~3.4]{CiMa} within the prescribed scope of parameters.

 Due to Corollary~\ref{coro:LLogL-est}, for solutions to $L\log L$-data problem we get 
\[\mu\in L\log L\quad\implies\quad g^\frac{n}{n-1}(|Du|)\in L^1,\]
relating to the Marcinkiewicz-Orlicz regularity obtained by Cianchi and Mazy'a~\cite{CiMa} for a sligthly different notion of solutions, see Remark~\ref{rem:CiMa}.

Meanwhile, Theorem~\ref{theo:Morrey-est} yields the Morrey gradient regularity (Definition~\ref{def:Mor:sp}) for the problems, where the data are measures  described by the density. Namely, 
\[\mu\in L^{ q,\theta }\ \ \implies\ \ g(|Du|)\in L^{\frac{\theta q}{\theta-q},\theta}\qquad\text{for}\qquad i_G\leq \theta\leq n \quad \text{ and }\quad 1<q\leq \frac{\theta i_G}{\theta s_G-\theta+i_G}
\]
which again in the $p$-Laplace case simplifies to~\eqref{SAM-class}. See that $\theta=n$ is included. Due to Corollary~\ref{coro:Bord-Morrey-est}, for solutions to $L\log L^\theta$-data problem where $2\leq\theta\leq n$, we get 
\[\mu\in L\log L^\theta\quad\implies\quad g^\frac{\theta}{\theta-1}(|Du|)\in L^1.\]
Moreover, within the same range of $\theta,q$ as in the Morrey case above and for $s\in(0,\infty]$ (including the Marcinkiewicz case $s=\infty$), Theorem~\ref{theo:Lor-Mor} provides the most general result in the scale of~the Lorentz-Morrey spaces (Definition~\ref{def:LorMor:sp}), namely
\[\mu\in L^{\theta}(q,s)\quad\implies\quad g(|Du|)\in   L^\theta\left(\frac{\theta q}{\theta-q},\frac{\theta s}{\theta-q}\right).\]
Since it captures the upper bound $q\leq {\theta i_G}/({\theta s_G-\theta+i_G})$, as well as the Marcinkiewicz case  $s=\infty$, it is the Orlicz extension of the main result of \cite[Theorem~11]{min-grad-est} by Mingione.

\subsubsection*{Methods and challenges} 

 The general approach of the paper is careful development of the methods introduced by Mingione in~\cite{min-grad-est} in the generalized $p$-Laplace case. Adapting the framework to the Orlicz setting requires to understand deeply the nature of needed tools and their careful derivation. Note that this approach is rearrangement-free and   enables to study in the same reasoning Lorentz-type and Morrey-type data. Our precise assumptions on the equation we study  are collected in the beginning of  Section~\ref{sec:results}. 
 
We provide estimates for the solution to our main problem~\eqref{intro:eq:main} expressed in the terms of the maximal operator of the data.  Let us indicate that the seminal idea of application of maximal operator to the nonlinear degenerate problems goes  back to~\cite{Iw} and the fundamental results for the nonlinear potential theory are provided by~\cite{KiMa}. The complete theory for equations with $p$-growth is presented in~\cite{KuMi}. We want to stress that the key potential estimates in the Orlicz setting are proven in~\cite{Baroni-Riesz} and they can be directly used to obtain qualitatively the same results as we provide here including also the scope of energy solutions. However,~\cite{Baroni-Riesz} considers only the case of~homogeneous vector field (i.e. $a:\rn\to\rn$), while in our approach $a:\Omega\times\rn\to\rn$ and the dependence $x\mapsto a(x,\cdot)$ is measurable.

 The main tool we derive and apply is the super-level set decay estimate for the maximal operator of~gradient of~solutions, which -- for presenting the intuition -- can be shortened to
\[\begin{split}
&\left|\left\{ M_0(|Du|)>K\lambda\right|\right\}  \leq\frac{1}{G^{\chi}(K)}\left|\left\{ M_0(|Du|)> \lambda\right\}\right|+\left|\left\{  {M_1(|\mu |)}> g(\ve\lambda)\right\}\right|,\end{split}\]
cf. the definition of maximal operators in~\eqref{MDu-Mmu-def} and the full estimate in~\eqref{eq:super-level-est}. Recall again that in the $p$-Laplace case we would have here $G(t)=|t|^p$ and $g(t)=|t|^{p-1}$. Roughly speaking the above inequality settles strongly nonlinear version of the estimate on the level sets of the Hardy-Littlewood maximal function of $|Du|$ by the level sets of the Riesz potential of the data.  See~Proposition~\ref{prop:super-level-est} for this part. What remains afterwards  is proper understanding what information carries the Riesz potential for various types of data. 

Find in Section~\ref{ssec:comments} comments on capturing the upper bound of $q$ in the proofs, which is sharp in the power case.

\subsubsection*{Organization of the paper}

We start the paper giving in Section~\ref{sec:results} the complete set of assumptions and collection of the main results accompanied by comments on their meaning. Afterwards, in Section~\ref{sec:prelim}, we provide Preliminaries that introduce  notation, the Orlicz setting in which our main equation~\eqref{eq:main} is formulated, and information on the notion of solutions we investigate. Section~\ref{sec:aux} is devoted to estimates on the solutions to the homogeneous problem, comparison estimates, and their direct consequences. In Section~\ref{sec:prel-est} we derive our main tool, i.e. super-level set estimates for the maximal operator of the gradient. The final proofs of the main theorems listed above are given in Section~\ref{sec:proofs}. In the end, in Appendix, we give necessary definitions, concise information of~the involved functional spaces, and  basic and classical estimates.

\section{The results}\label{sec:results}

The problem we consider generalizes the problem~\eqref{intro:eq:plap} towards nonstandard growth described in the Orlicz setting. Let us collect here the set of assumptions and present below the main theorems. 

\subsection*{Assumption (A)}

Recall $\Omega$ is a bounded domain in $\rn$, $n\geq 2$. We investigate the problem 
\begin{equation}\label{eq:main}
-{\dv}\, a(x,Du)=\mu\quad\text{in}\quad \Omega,
\end{equation}
where $\Omega\subset\rn$, $\mu$ is  a Borel measure with finite total mass $|\mu|(\Omega)<\infty$, whereas $a:\Omega\times\rn\to\rn$ is a Carath\'{e}odory function (measurable with respect to the first variable and continuous with respect to the second one) that satisfies
\begin{equation*}
\left\{\begin{array}{l}
\langle\partial_z a(x,z)\lambda,\lambda\rangle\geq \nu\frac{g(|z|)}{|z|}\lambda^2,\\
|a(x,z)|+|\partial a(x,z)|\cdot|z|\leq Lg(|z|),
\end{array}\right.
\end{equation*} 
with some increasing and convex function $g\in C^1(0,\infty)$ satisfying~\eqref{ig-sg}, i.e.  \[1\leq i_g=\inf_{t>0}\frac{tg'(t)}{g(t)}\leq \sup_{t>0}\frac{tg'(t)}{g(t)}=s_g<\infty
,\] equivalent to $g,\wt{g}\in\Delta_2$ where $\wt{g}(s)=\sup_{t>0}(t\cdot s-g(t))$ is the Young conjugate of $g$. The primitive of~$g$ is $G\in C^2(0,\infty)$, i.e. $G'(t)=g(t)$. The parameters $i_G=i_g+1$ and $s_G=s_g+1$ describe the speed of growth of $G$.

\subsection*{Main results}

We prove estimates of gradient integrability in several function spaces. See Subsection~\ref{ssec:fn-sp} (in Appendix) for necessary definitions and notation e.g. of the averaged norms. The main proofs are provided in Section~\ref{ssec:main-proofs} with added comments on the range of parameters in Section~\ref{ssec:comments}. Let us present the precise formulations of our main goals.

\medskip

The first result we provide is corresponding to~\eqref{BG-class} and~\eqref{Mingione-Lor}.
\begin{theo}[Estimates in Lorentz spaces]\label{theo:Lorentz-est}
 Let $u\in W^{1,1}_{loc}(\Omega)$ be a local SOLA to the equation~\eqref{eq:main} with $G,g$ satisfying (A) and 
 \begin{equation*}
 1< q \leq \frac{n i_G}{ns_G-n+i_G} \qquad\text{and}\qquad 0<s\leq \infty.
 \end{equation*}
If $\mu\in L(q,s)$ locally in $\Omega,$ then 
 \[ g(|Du|)\in L\left(\frac{nq}{n-q},s\right)\quad\text{locally in }\ \Omega.\]
 
 Moreover, there exists $c=c(n,i_G, s_G,q,s)$, such that for every ball $B_R\subset\subset\Omega$ we have
 \begin{equation}
\label{eq:Lorentz-est}  \avenorm{ g( |Du|) }_{L\left(\frac{nq}{n-q},s\right)(B_{R/2})}\leq c\,g\left( \barint_{B_{2R} }|Du|dx\right) +c \avenorm{\mu}_{L(q ,s )(B_R)}.
\end{equation} 
\end{theo}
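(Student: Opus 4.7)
The plan is to derive the estimate from the super-level set decay of Proposition~\ref{prop:super-level-est}, which schematically reads
\[
\bigl|\{M_0^*(|Du|)>K\lambda\}\bigr|\leq\frac{1}{G^\chi(K)}\bigl|\{M_0^*(|Du|)>\lambda\}\bigr|+\bigl|\{M_1^*(|\mu|)>g(\varepsilon\lambda)\}\bigr|,
\]
by integrating it against the natural distribution-function weight of the Lorentz norm and absorbing the self-similar term via a large choice of $K$. Since $|Du|\leq M_0^*(|Du|)$ a.e.\ by Lebesgue differentiation, it suffices to control $g(M_0^*(|Du|))$ in $L(\alpha,s)$ with $\alpha=nq/(n-q)$.

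The first step is to re-express the target norm through the distribution function of $M_0^*(|Du|)$. The substitution $\tau=g(\lambda)$ together with the equivalence $\lambda g'(\lambda)\simeq g(\lambda)$ coming from~\eqref{ig-sg} gives
\[
\|g(M_0^*(|Du|))\|_{L(\alpha,s)}^s\simeq I:=\int_0^\infty g(\lambda)^s\bigl|\{M_0^*(|Du|)>\lambda\}\bigr|^{s/\alpha}\,\frac{d\lambda}{\lambda}.
\]
Scaling $\lambda\mapsto K\lambda$ on the left, invoking the $\Delta_2$-type bound $g(K\lambda)\leq K^{s_g}g(\lambda)$, inserting the super-level estimate, and using either $(a+b)^{s/\alpha}\lesssim a^{s/\alpha}+b^{s/\alpha}$ (when $s\leq\alpha$) or Minkowski's inequality (when $s\geq\alpha$) yields
\[
I\leq C\,K^{s\,s_g}\bigl[G^\chi(K)\bigr]^{-s/\alpha}I+C\,K^{s\,s_g}J,
\]
where $J$ is the analogous integral for $M_1^*(|\mu|)$. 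Undoing the substitution $\sigma=g(\varepsilon\lambda)$ identifies $J\simeq\|M_1^*(|\mu|)\|_{L(\alpha,s)}^s$, which the boundedness of the fractional maximal operator of order one from $L(q,s)$ into $L(nq/(n-q),s)$ — the Sobolev-type scaling $\alpha=nq/(n-q)$ being exactly what matches the exponents — controls by $\|\mu\|_{L(q,s)}^s$.

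For the absorption one needs $C\,K^{s\,s_g}[G^\chi(K)]^{-s/\alpha}\leq 1/2$. Since $G(K)\gtrsim K^{i_G}$ for large $K$, the prefactor behaves like $K^{s(s_g-\chi i_G/\alpha)}$, so a sufficiently large $K$ succeeds precisely when $\chi i_G/\alpha>s_g$. Unpacking $\alpha=nq/(n-q)$ this is equivalent to $q<ni_G\chi/(ns_g+i_G\chi)$; letting $\chi$ approach the admissible endpoint furnished by Proposition~\ref{prop:super-level-est} recovers the sharp range $q\leq ni_G/(ns_G-n+i_G)$ stated in the theorem (using $s_G=s_g+1$).

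The main obstacle I anticipate is the a priori finiteness of $I$ needed to legally subtract the self-similar term, since for measure data $g(|Du|)$ is not known to lie in any Lorentz space before the theorem is proved. The standard cure is to work first with the truncation $\min(M_0^*(|Du|),N)$, carry out the absorption with the truncated quantity, and then let $N\to\infty$ by monotone convergence; the base integrability of $|Du|$ for SOLA provided in Section~\ref{sec:aux} supplies the initial finiteness needed to make this limiting chain rigorous. Unwrapping the localization built into the truncated maximal operators $M^*_{0;2B_0}$ and $M^*_{1;2B_0}$ then delivers the estimate in the precise averaged-norm form~\eqref{eq:Lorentz-est}.
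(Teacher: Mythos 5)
Your route coincides with the paper's: Theorem~\ref{theo:Lorentz-est} is obtained there from Proposition~\ref{prop:super-level-est} by precisely the weighted integration and absorption you sketch (this is Proposition~\ref{prop:maximal-est}), followed by Lemma~\ref{lem:Riesz}~(i) for the datum term. However, as written your argument has genuine gaps. The super-level estimate \eqref{eq:super-level-est} is valid only for $\lambda\geq\lambda_0$, with $\lambda_0$ as in \eqref{lambda>lambda0-def} proportional to $\barint_{2B}|Du|\,dx$ and blowing up as $r_1\to r_2$, and it compares level sets on two \emph{different} concentric balls, $r_1B$ on the left and $r_2B$ for the self-similar term on the right. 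Your display
\[
I\leq C\,K^{s s_g}\bigl[G^{\chi}(K)\bigr]^{-s/\alpha} I+C\,K^{s s_g} J
\]
integrates over all $\lambda\in(0,\infty)$ with the same quantity $I$ on both sides; this cannot be correct: for $\mu\equiv0$ it would give $I\leq\frac{1}{2}I$, hence $g(|Du|)\equiv0$. The levels $\lambda<\lambda_0$ must be handled separately — they are exactly the source of the term $c\,g\bigl(\barint_{B_{2R}}|Du|\,dx\bigr)$ in \eqref{eq:Lorentz-est}, which your argument never produces — and the self-similar term, living on the larger ball $r_2B$, cannot be absorbed merely by taking $K$ large: the paper truncates the $\lambda$-integral at a finite $\lambda_1$ (which also disposes of your finiteness worry) and applies the iteration Lemma~\ref{lem:absorb1} in the radius variable before letting $\lambda_1\to\infty$. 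Your truncation $\min(M^*_0(|Du|),N)$ does not substitute for this step.

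Moreover, Proposition~\ref{prop:super-level-est} (and the comparison machinery behind it) is established for weak solutions $u\in W^{1,G}$ of the approximating problems \eqref{eq:main-k-for-SOLA} with bounded data, not for a SOLA, which a priori is only $W^{1,1}_{loc}$; Section~\ref{sec:aux} does not supply the ``base integrability'' you invoke. The theorem is reached by proving the estimate for each approximation $u_k$ and passing to the limit using the convergence $g(|Du_k|)\to g(|Du|)$ in $L^1_{loc}$ built into Definition~\ref{def:SOLA}; this limiting step should be explicit. A minor point: $\chi>1$ is a fixed higher-integrability exponent, not a parameter one can ``let approach'' an endpoint; the closed range $q\leq ni_G/(ns_G-n+i_G)$ is admissible simply because it lies strictly inside the open range $q< n i_G\chi/(ns_G-n+i_G\chi)$ for any fixed $\chi>1$.
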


In the case of $a:\Omega\times\rn\to\rn$ having the growth comparable to $p$-Laplacian's ($i_G=s_G=p$) we recover the result of \cite[Theorem~13]{min-grad-est}.
\begin{coro}[$p$-Laplace case] 
 Let $u\in W^{1,1}_{loc}(\Omega)$ be a local SOLA to the equation $-\Delta_p u=\mu$ with $p\geq 2$ and 
 \[
 1< q  \leq \frac{n p }{np-n+p }  \qquad\text{and}\qquad 0<s\leq \infty.
 \]
If $\mu\in L(q,s)$ locally in $\Omega$, then
 \[ |Du|^{p-1}\in L\left(\frac{nq}{n-q},s\right)\quad\text{locally in }\ \Omega.\]
Moreover, there exists $c=c(n,p,q,s)$, such that for every ball $B_R\subset\subset\Omega$ we have
 \begin{equation*}
  \avenorm{ | Du |^{p-1}  }_{L\left(\frac{nq}{n-q} ,s \right)(B_{R/2})}\leq c\, \barint_{B_{2R} }|Du|dx +c \avenorm{ f }^\frac{1}{p-1} _{L(q ,s)(B_R)}.
\end{equation*}
\end{coro}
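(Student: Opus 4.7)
The plan is to read the corollary as a direct specialization of Theorem~\ref{theo:Lorentz-est} to the canonical $p$-Laplace choice $a(x,z) = |z|^{p-2}z$, which is independent of $x$. I would first select $g(t) = t^{p-1}$ and $G(t) = t^p/p$, so that $a(x,z) = g(|z|)z/|z|$. The computation $tg'(t)/g(t) \equiv p-1$ is one line and gives $i_g = s_g = p-1$, hence $i_G = s_G = p$; in particular both $g$ and $\wt{g}$ satisfy $\Delta_2$. The growth bound $|a(x,z)| + |\partial_z a(x,z)||z| \leq p\,g(|z|)$ is immediate from $\partial_z a(x,z) = |z|^{p-2}I + (p-2)|z|^{p-4}z\otimes z$, and the coercivity $\langle \partial_z a(x,z)\lambda,\lambda\rangle \geq |z|^{p-2}|\lambda|^2 = (g(|z|)/|z|)|\lambda|^2$ is classical for $p\geq 2$, corresponding to the smallest eigenvalue $|z|^{p-2}$. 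Assumption (A) is therefore satisfied with $\nu = 1$ and $L = p$.

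With (A) verified, the admissible range $1 < q \leq n i_G/(n s_G - n + i_G)$ from Theorem~\ref{theo:Lorentz-est} collapses exactly to $1 < q \leq np/(np-n+p)$, and the conclusion $g(|Du|) \in L(\frac{nq}{n-q},s)$ reads $|Du|^{p-1} \in L(\frac{nq}{n-q},s)$. Substituting $g(t)=t^{p-1}$ into inequality~\eqref{eq:Lorentz-est} yields
\[\avenorm{|Du|^{p-1}}_{L(\frac{nq}{n-q},s)(B_{R/2})} \leq c\left(\barint_{B_{2R}}|Du|\,dx\right)^{p-1} + c\,\avenorm{f}_{L(q,s)(B_R)},\]
where we identify the measure $\mu$ with its density $f$. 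Extracting the $(p-1)$-th root of both sides, which is permitted because $p-1\geq 1$, and invoking the subadditivity $(a+b)^{1/(p-1)}\leq a^{1/(p-1)}+b^{1/(p-1)}$, produces the form displayed in the corollary, after absorbing numerical factors into $c = c(n,p,q,s)$.

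There is no genuine obstacle: the whole argument is an unpacking of definitions plus one algebraic rearrangement to reconcile the $(p-1)$-homogeneity of the Orlicz estimate with the mixed homogeneity of the stated $p$-Laplace form. The only point that deserves a moment of care is confirming that the lower ellipticity in (A), written in terms of $g(|z|)/|z|$, matches the smallest eigenvalue of $\partial_z a(x,z)$ under the restriction $p\geq 2$; the restriction $p\geq 2$ in the corollary is precisely what allows this identification and ensures the subadditivity step goes through.
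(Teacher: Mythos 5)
Your proposal is correct and takes essentially the same route as the paper: the corollary is given there without a separate argument, precisely as a specialization of Theorem~\ref{theo:Lorentz-est} with $g(t)=t^{p-1}$, $G(t)=t^p/p$, $i_G=s_G=p$, $\nu=1$, $L=p$, which is exactly what you verify before substituting into~\eqref{eq:Lorentz-est}. One small caveat: taking the $(p-1)$-th root gives $\avenorm{|Du|^{p-1}}^{1/(p-1)}_{L(\frac{nq}{n-q},s)(B_{R/2})}\leq c\,\barint_{B_{2R}}|Du|\,dx+c\,\avenorm{f}^{1/(p-1)}_{L(q,s)(B_R)}$, i.e.\ the left-hand side carries the exponent $1/(p-1)$ (consistent with the Boccardo--Gallou\"et form quoted right after the corollary), whereas the corollary's literal display with an unexponentiated left-hand side is dimensionally inhomogeneous under the scaling $u\mapsto\lambda u$, $\mu\mapsto\lambda^{p-1}\mu$ --- a misprint in the statement rather than a gap in your argument.
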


The above result implies the local version of estimates provided by Boccardo and Gallou\"et~\cite{bgSOLA-cpde} obtained in the case of $s=q$ for problems with $u=0$ on $\partial\Omega$ and parameters as in~\eqref{BG-class} via
\[\| Du \|_{L^{\frac{nq(p-1)}{n-q}}(B_{R/2})} \leq c\, \int_{B_R }|Du|\,dx +c \| f \|^\frac{1}{p-1}_{L^q(B_R)}.\]
Moreover, note that we provide also the estimate in the borderline case $q=np/(np-n+p)$ not included therein, nor by~\cite{BocMarcSola,KiLi} (but covered by Mingione in~\cite{min-grad-est}).

\medskip

The following result extends to the Orlicz setting the estimates provided in~\cite{bgSOLA-cpde} in order to cover the borderline integrability. 
\begin{coro}[Estimates for $L\log L$-data equation]\label{coro:LLogL-est}
 Let $u\in W^{1,1}_{loc}(\Omega)$ be a local SOLA to the equation~\eqref{eq:main} with $G,g$ satisfying (A).  Assume further that $\mu\in L\log L_{loc}(\Omega)$, then
 \[g^\frac{n}{n-1}(|Du|)\in L^1_{loc}(\Omega).\]
 
 Moreover, there exists $c=c(n,G,q,\gamma)$, such that for every ball $B_R\subset\subset\Omega$ we have
 \begin{equation}
\label{eq:LLogL-est} \avenorm{ g( |Du|)}_{L^{\frac{n}{n-1}}(B_{R/2})}\leq c\,g\left( \barint_{B_R }|Du|dx\right) +c \avenorm{\mu }_{L\log L(B_R)}.
\end{equation}
\end{coro}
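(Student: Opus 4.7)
The strategy is to push the iteration underpinning Theorem~\ref{theo:Lorentz-est} to the endpoint $q=1$, which is excluded from the Lorentz scale but becomes accessible at the price of a logarithmic refinement on the data. I would argue in two steps: first derive the Orlicz analogue of the strong $L^{n/(n-1)}$ endpoint bound in terms of a fractional maximal function of $\mu$, and then invoke the classical borderline mapping of $M_{1}$ on $L\log L$.

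For the first step I would start from the super-level set decay estimate given by Proposition~\ref{prop:super-level-est}, multiply both sides by the weight $g(\lambda)^{1/(n-1)}g'(\lambda)$, and integrate in $\lambda$ from a threshold $\lambda_{0}>0$ to $\infty$. By Cavalieri's principle and the substitution $t=g(\lambda)$, the left-hand side produces, after the rescaling $\lambda\mapsto HK\lambda$, a constant multiple of $\int_{B_{R/2}} g(\Mb(|Du|))^{n/(n-1)}\,dx$, while the right-hand side splits into three pieces: a term that is absorbed once $K$ is chosen so large that $c/G^{\chi}(K)$ is small, a tail at $\lambda_{0}$ bounded by $g(\barint_{B_{R}}|Du|\,dx)^{n/(n-1)}$, and a maximal-function contribution which, after the inverse change of variables $s=g(\ve\lambda)$, reduces to $\int_{B_{R}}(\Mbm(|\mu|))^{n/(n-1)}\,dx$. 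Combining with the pointwise bound $|Du|\le \Mb(|Du|)$ a.e.\ and a second application of Cavalieri, one obtains
\[\avenorm{g(|Du|)}_{L^{n/(n-1)}(B_{R/2})}\leq c\,g\Bigl(\barint_{B_{R}}|Du|\,dx\Bigr)+c\,\avenorm{\Mbm(|\mu|)}_{L^{n/(n-1)}(B_{R})}.\]

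The second step uses the classical borderline boundedness of the fractional maximal function of order one, $M_{1}:L\log L\to L^{n/(n-1)}$, which in the local, truncated form we need gives $\avenorm{\Mbm(|\mu|)}_{L^{n/(n-1)}(B_{R})}\le c\,\avenorm{\mu}_{L\log L(B_{R})}$. Chaining the two estimates yields~\eqref{eq:LLogL-est}.

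The main obstacle I anticipate is the accurate execution of the integration and the absorption in the first step: because the dilation $\lambda\mapsto HK\lambda$ acts non-monomially on $g$, every change of variable that would be explicit in the $p$-Laplace case has to be replaced by an equivalence relying on $g,\wt{g}\in\Delta_{2}$. In practice one has to use the bounds $c^{i_{g}}g(\lambda)\le g(c\lambda)\le c^{s_{g}}g(\lambda)$, together with the analogous bounds for $g'$, in order to track that the absorption constant depends only on $n,i_{G},s_{G}$ and that no loss prevents the threshold $\lambda_{0}$ from being chosen of the size $\barint_{B_{R}}|Du|\,dx$, which is precisely what converts the tail term into the right-hand side of~\eqref{eq:LLogL-est}.
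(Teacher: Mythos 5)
Your proposal follows essentially the same route as the paper: the paper simply invokes Proposition~\ref{prop:maximal-est} with $t=\gamma=n/(n-1)$ (whose proof is precisely the weighted integration of the super-level set estimate~\eqref{eq:super-level-est} that you describe) and then Lemma~\ref{lem:Riesz}~\textit{ii)}, the borderline bound $M^*_{1}:L\log L\to L^{n/(n-1)}$. The only point you leave implicit is that these estimates hold for weak solutions of the approximating problems~\eqref{eq:main-k-for-SOLA}, so they must be applied to $u_k$ and the limit taken via the convergences in Definition~\ref{def:SOLA}, followed by a standard covering argument.
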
\medskip

Note that Corollary~\ref{coro:LLogL-est} applied to the Zygmund-type function $t^p\log^\beta(1+t)$ leads to the same regularity as the one obtained by Cianchi and Maz'ya~\cite{CiMa} for some other kind of solutions, namely approximable solutions, to the problem corresponding to our~\eqref{eq:main}. Indeed, we have the following remark.

\begin{rem} \label{rem:CiMa}  The result of~\cite{CiMa} is the existence and the Orlicz-Marcinkiewicz regularity of unique approximable solution $u$ to $-\dv A(x,D u)=f$ where
\[A(x,\xi)\xi\geq G(|\xi|) \qquad\text{and}\qquad |A(x,\xi)|\leq c\big(g(|\xi|)+h(x)\big),\]
with $G(|\xi|)$ comparable to $|\xi|^p\log^\beta(|\xi|)$ near infinity such that $1<i_G\leq s_G<\infty$, $G'(t)=g(t)$, and $\wt{G}(h)\in L^1(\Omega)$. Namely, the method leads to
\[D u \in L^{\frac{n(p-1)}{n-1}}(\log L)^{{  \frac{\beta n}{n-1}}}\qquad\qquad 1<p<n,\quad \beta>0.\] Let us point out the obvious misprint in~\cite[Example~3.4]{CiMa} in this line.
\end{rem}

Let us now concentrate on the problems with data being density-driven measures. In order to ensure that the statements are clear and intuitive we present the  results  first in the commonly understood Morrey spaces and below its generalisation to the more sophisticated setting of the Lorentz-Morrey spaces, both defined precisely in Section~\ref{ssec:fn-sp}.

\medskip

We have the following extension of the already mentioned linear Adams theorem~\cite{adams-note}, as well as its $p$-Laplace version~\cite[Theorem~1]{min-grad-est}, retrieving the range of~parameters therein, cf.~\eqref{SAM-class}.

\begin{theo}[Estimates in the Morrey spaces]\label{theo:Morrey-est}
 Let $u\in W^{1,1}_{loc}(\Omega)$ be a local SOLA to the equation~\eqref{eq:main} with $G,g$ satisfying (A) and
\begin{equation}
\label{q-Morrey}i_G\leq \theta\leq n\qquad\text{and}\qquad 1<q\leq\frac{\theta i_G }{\theta s_G-\theta+i_G }.
\end{equation} 
If  $\mu\in L^{q,\theta} $ locally in $\Omega$, then
 \[g(|Du|)\in L^{\frac{\theta q}{\theta-q},\theta}\quad\text{locally in }\ \Omega.\]
 
 Moreover, there exists $c=c(n,G,q,\gamma)$, such that for every ball $B_R\subset\subset\Omega$ we have
 \begin{equation*}
  \avenorm{ g( |Du|) }_{L^{\frac{\theta q}{\theta-q},\theta}(B_{R/2})}\leq c\,R^{\frac{\theta-q}{q}-n} g\left( \barint_{B_{2R} }|Du|dx\right) +c \avenorm{ \mu }_{L^{q,\theta}(B_R)}.
\end{equation*} 
\end{theo}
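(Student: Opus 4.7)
The plan is to exploit the super-level set decay estimate from Proposition~\ref{prop:super-level-est} --- the Orlicz analog of the classical Hardy-Littlewood/Riesz comparison --- together with an Adams-type fractional integration estimate for the truncated maximal operator on Morrey spaces. By the Lebesgue differentiation theorem, pointwise a.e.\ one has $g(|Du|)\leq g(\MDu)$, so it suffices to establish the claimed Morrey bound with $g(\MDu)$ in place of $g(|Du|)$.

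First I would fix $B_R\subset\subset\Omega$, a generic sub-ball $B_r\subset B_R$, set $\gamma=\tfrac{\theta q}{\theta-q}$, and rewrite the basic quantity by layer-cake combined with the substitution $t=g(\lambda)$,
\[
r^{\theta-n}\int_{B_r} g(\MDu)^{\gamma}\,dx \,=\, \gamma\,r^{\theta-n}\int_0^\infty g(\lambda)^{\gamma-1}g'(\lambda)\bigl|\{\MDu>\lambda\}\cap B_r\bigr|\,d\lambda.
\]
Splitting at a threshold $\lambda_0$ of order $\barint_{B_{2R}}|Du|\,dx$, the contribution of the range $\lambda\leq\lambda_0$ is trivially bounded by $g(\lambda_0)^\gamma \,r^{\theta-n}|B_r|\lesssim R^{\theta-q n/(\theta-q)} g(\barint_{B_{2R}}|Du|\,dx)^\gamma$, which produces precisely the first term on the right-hand side of the claimed estimate.

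Next, on the tail $\lambda>\lambda_0$ I would insert the super-level decay of Proposition~\ref{prop:super-level-est}, change variables $\lambda\mapsto K\lambda$, and choose $K$ large and $\varepsilon$ small so that the self-improving term $\tfrac{1}{G^\chi(K)}|\{\MDu>\lambda\}|$ is reabsorbed into the left-hand side after multiplying by the weight $g(K\lambda)^{\gamma-1}g'(K\lambda)$. What survives is a tail integral involving $|\{\Mbm(\mu)>g(\varepsilon\lambda)\}|$ that, by reversing the layer-cake step, is a multiple of
\[
r^{\theta-n}\int_{B_r} \Mbm(\mu)^{\gamma}\,dx.
\]
To close the chain I invoke the Orlicz--Adams fractional integration estimate on Morrey spaces: $\mu\in L^{q,\theta}$ locally implies $M_1(\mu)\in L^{\gamma,\theta}$ locally with the norm estimate controlled by $\avenorm{\mu}_{L^{q,\theta}(B_R)}^\gamma$. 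Taking the supremum over sub-balls $B_r\subset B_R$ and using Definition~\ref{def:Mor:sp} of the Morrey norm gives the claim.

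The main obstacle is tracking the sharp upper bound $q\leq \tfrac{\theta i_G}{\theta s_G-\theta+i_G}$ through the absorption step. In the power case $i_G=s_G=p$ there is only one growth exponent to balance, but in the Orlicz setting one must use $i_G$ at every point where a lower bound on $G$ is invoked (in particular when quantifying the Sobolev-type gain that makes $\chi>1$ in $G^\chi(K)$) and $s_G$ wherever the weight $g(K\lambda)^{\gamma-1}g'(K\lambda)$ must be controlled from above after rescaling. The joint compatibility of these two $\Delta_2$-style estimates, so that $K$ can be chosen large enough to make $G^\chi(K)$ dominate the factor $K^{(\gamma-1)s_g+s_g}$ produced by the rescaling, is exactly what fixes the threshold $\tfrac{\theta i_G}{\theta s_G-\theta+i_G}$ on $q$.
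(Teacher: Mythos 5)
Your plan reproduces, in essence, the proof of the preliminary Lorentz estimate (Proposition~\ref{prop:maximal-est}) with $t=\gamma=\tfrac{\theta q}{\theta-q}$, but it does not by itself give the Morrey bound, because the super-level set estimate of Proposition~\ref{prop:super-level-est} is not localized in the way your layer-cake computation on an arbitrary sub-ball $B_r\subset B_R$ requires. If you apply that proposition relative to the small ball $B_r$ (which is what you must do to keep the weight $r^{\theta-n}$ on both sides of the absorption step), the admissible thresholds are $\lambda\geq\lambda_0(r)\simeq \frac{G^{\chi}(HT)}{HT}\barint_{B_{2r}}|Du|\,dx$, i.e. the average of $|Du|$ over the concentric double of the \emph{small} ball, not over $B_{2R}$. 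The low-$\lambda$ contribution is then of size $r^{\theta}\,g\big(c\,\barint_{B_{2r}}|Du|\,dx\big)^{\gamma}$, and bounding this uniformly over all sub-balls by the claimed right-hand side amounts, via Jensen, precisely to a Morrey decay estimate for $g(|Du|)$ in the scale $L^{1,\frac{\theta-q}{q}}$. This is not ``trivial'': it is Proposition~\ref{prop:grad-u-est} of the paper, proven separately through the comparison estimate \eqref{eq:cominMor} and the decay property \eqref{inq:Morrey-continuity} of solutions to the homogeneous problem, combined with the iteration Lemma~\ref{lem:absorb2}; none of these ingredients appear in your proposal. If instead you apply the super-level set estimate relative to the fixed large ball $B_R$ (so that $\lambda_0\sim\barint_{B_{2R}}|Du|$, as you wrote), you lose all localization: the level sets on the right live in dilates of $B_R$, the factor $r^{\theta-n}$ cannot be carried through the reabsorption, and level-set information alone is rearrangement invariant, so it can only reproduce the Lorentz result of Theorem~\ref{theo:Lorentz-est}, never a Morrey (non-rearrangement-invariant) bound. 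This missing intermediate step --- the $L^{1,\frac{\theta-q}{q}}$ estimate for $g(|Du|)$, which is exactly what lets the paper replace small-ball averages of $|Du|$ by $R^{\frac{\theta-q}{q}-n} g\big(\barint_{B_{2R}}|Du|\,dx\big)$ plus the datum --- is the essential gap.

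Two further remarks. The Adams-type ingredient you invoke at the end ($\mu\in L^{q,\theta}$ implies $M^*_{1}(\mu)\in L^{\frac{\theta q}{\theta-q},\theta}$ with the corresponding norm bound) is indeed available and is used by the paper in the same role (Lemma~\ref{lem:Riesz}\,\textit{iii)}), and your bookkeeping of the upper bound on $q$ through $t<\chi i_G/(s_G-1)$ is in the right spirit. However, your argument is carried out directly on the SOLA $u$, whereas the super-level set machinery is established only for weak solutions $u_k$ of the approximating problems \eqref{eq:main-k-for-SOLA}; one must prove the estimate for $u_k$ and pass to the limit using Definition~\ref{def:SOLA}, and the paper moreover runs the whole argument on a rescaled unit ball and transfers the estimate back via the scaling of Morrey norms (Remark~\ref{rem:Mor-scal}) and a covering argument, steps your outline omits.
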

When we take into account \cite[Remark~7]{min-grad-est}, it implies the following Orlicz version of~\cite[Theorem~12]{min-grad-est} (simplifying to it in the $p$-Laplace case). Compare its form for $\theta=n$ with Corollary~\ref{coro:LLogL-est} above.

\begin{coro}[Borderline Morrey case]\label{coro:Bord-Morrey-est}
 Let $u\in W^{1,1}_{loc}(\Omega)$ be a local SOLA to the equation~\eqref{eq:main} with $G,g$ satisfying (A) and parametrs as in~\eqref{q-Morrey}. If $2\leq\theta\leq n$ and $\mu\in L\log L^\theta $ locally in $\Omega$, then
 \[g(|Du|)\in L^{\frac{\theta}{\theta-1},\theta}\quad\text{locally in }\ \Omega.\]
 
 Moreover, there exists $c=c(n,G,q,\gamma)$, such that for every ball $B_R\subset\subset\Omega$ we have
 \[\avenorm{ g( |Du|) }_{L^{\frac{\theta }{\theta-1},\theta}(B_{R/2})}\leq c\,R^{ \theta-1 -n} g\left( \barint_{B_{2R} }|Du|dx\right) +c \avenorm{ \mu }_{L\log L^\theta(B_R)}.
\]
\end{coro}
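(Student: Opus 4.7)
The plan is to derive Corollary~\ref{coro:Bord-Morrey-est} as the endpoint $q\to 1^+$ of Theorem~\ref{theo:Morrey-est}, mimicking how Corollary~\ref{coro:LLogL-est} extends Theorem~\ref{theo:Lorentz-est} from the Lorentz scale to the Zygmund borderline. The driving ingredient is the super-level set decay inequality of Proposition~\ref{prop:super-level-est},
\[|\{M_0(|Du|)>K\lambda\}\cap B|\leq \frac{1}{G^{\chi}(K)}|\{M_0(|Du|)>\lambda\}\cap B|+|\{M_1(|\mu|)>g(\varepsilon\lambda)\}\cap B|,\]
which is already the engine of the proof of Theorem~\ref{theo:Morrey-est}; what needs to be rebuilt at the borderline is only how its right-hand side is summed against the Morrey functional at the endpoint exponent $p=\theta/(\theta-1)$.

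First I would expand the Morrey norm via the layer-cake formula on concentric balls $B\subset B_R$, translating level sets of $g(|Du|)$ into those of $M_0(|Du|)$ through $g(|Du|)\leq g(M_0(|Du|))$ and the $\Delta_2$-property of $g$. Iterating the super-level set inequality along a geometric sequence $\lambda_k=K^k\lambda_0$, with $K$ so large that $1/G^{\chi}(K)<1/2$, makes the first term on the right collapse into a convergent geometric series, while the second term sums, after the change of variable $g(\varepsilon\lambda)\mapsto\lambda$ controlled by the finiteness of $s_G$, into a Marcinkiewicz-Morrey norm of the truncated fractional maximal operator $M_1(|\mu|)$.

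The genuine new ingredient, and in my view the crux of the proof, is therefore the borderline Adams-type estimate
\[\avenorm{M_1(|\mu|)}_{L^{\theta/(\theta-1),\theta}(B_R)}\leq c\,\avenorm{\mu}_{L\log L^\theta(B_R)},\]
which replaces the $L^{q,\theta}\to L^{\theta q/(\theta-q),\theta}$ mapping used for $q>1$ in Theorem~\ref{theo:Morrey-est} by its endpoint counterpart at $q=1$. This is precisely the content of \cite[Remark~7]{min-grad-est} transferred to the Orlicz framework: it is a purely potential-theoretic statement about the fractional maximal operator with Zygmund-Morrey data, relies on the compatibility $\theta\geq i_G\geq 2$ with the growth range $i_G\leq\theta\leq n$ of Theorem~\ref{theo:Morrey-est}, and is what forces the $L\log L^\theta$ integrability on $\mu$ rather than plain $L^{1,\theta}$.

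Once this Adams estimate is in hand, plugging it into the iterated super-level sum and absorbing the initialization term $g\bigl(\barint_{B_{2R}}|Du|\,dx\bigr)$ as in the proof of Theorem~\ref{theo:Morrey-est} yields the announced bound, with the correct scaling factor $R^{\theta-1-n}$. The main obstacle is therefore the verification of the borderline Adams inequality in the Orlicz-Morrey-Zygmund scale together with the careful tracking of its dependence on $R$; the rest is a routine transcription of the argument for Theorem~\ref{theo:Morrey-est}, with the $L^{q,\theta}$-strength of $\mu$ replaced by its $L\log L^\theta$ surrogate throughout the iteration.
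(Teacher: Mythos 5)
Your proposal is correct and follows essentially the same route as the paper, which obtains Corollary~\ref{coro:Bord-Morrey-est} by running the machinery behind Theorem~\ref{theo:Morrey-est} at the borderline exponent and replacing the maximal-operator bound $L^{q,\theta}\to L^{\frac{\theta q}{\theta-q},\theta}$ by the endpoint Adams-type estimate for $M^*_1(\mu)$ with $L\log L^\theta$ data taken from \cite[Remark~7]{min-grad-est}. The only simplification worth noting is that your geometric iteration of the super-level set inequality is already packaged in Proposition~\ref{prop:maximal-est} (applied with $t=\gamma=\theta/(\theta-1)$, admissible because the nonemptiness of~\eqref{q-Morrey} forces $\theta/(\theta-1)\leq i_G/(s_G-1)<\chi i_G/(s_G-1)$), and the endpoint maximal estimate is a purely potential-theoretic fact about $\mu$ alone, independent of the Orlicz structure.
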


Now we present the main accomplishment of the paper extending sharp \cite[Theorem~11]{min-grad-est} (simplifying to it in the $p$-Laplace case) and thus also the classical result by Adams and Lewis~\cite{adams-lewis}. Note that we include the estimate include the upperbound of the rage of parameters $\theta$ and $q$, as well as the Marcinkiewicz case ($s=\infty$).

\begin{theo}[Estimates in the Lorentz-Morrey spaces]\label{theo:Lor-Mor}
 Let $u\in W^{1,1}_{loc}(\Omega)$ be a local SOLA to the equation~\eqref{eq:main} with $G,g$ satisfying (A), parametrs $\theta,q$ be as in~\eqref{q-Morrey} and $s\in(0,\infty]$. If  $\mu\in L^{\theta}(q,s) $ locally in $\Omega$, then
 \[g(|Du|)\in L^\theta\left(\frac{n q}{n-q},\frac{ns}{n-q}\right)\quad\text{locally in }\ \Omega.\]
 
 Moreover, there exists $c=c(n,G,q,s)$, such that for every ball $B_R\subset\subset\Omega$ we have
\[\avenorm{g( |Du |)}_{L^\theta\left(\frac{\theta q}{\theta-q},\frac{\theta s}{\theta-q}\right)(B_{R/4})} \leq cg\left(\barint_{B_R}|Du |dx\right)+c\avenorm{\mu 
 }_{L^\theta(q,s)(B_R)}.\]
\end{theo}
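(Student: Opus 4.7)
The plan is to lift the super-level set decay of Proposition~\ref{prop:super-level-est} into the Lorentz-Morrey scale, fusing the integration scheme used for Theorem~\ref{theo:Lorentz-est} with the Morrey scaling and ball-supremum used for Theorem~\ref{theo:Morrey-est}. Fix $B_R\subset\subset\Omega$ and, for every sub-ball $B\subset B_R$ of radius $r_B$, start from the inequality
\[|\{\Mb(|Du|)>K\lambda\}\cap B|\leq \frac{1}{G^\chi(K)}|\{\Mb(|Du|)>\lambda\}\cap B|+|\{\Mbm(\mu)>g(\ve\lambda)\}\cap B|,\]
valid for $K$ large and some $\chi>1$ given by the proposition.

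Setting $\gamma:=\theta q/(\theta-q)$ and $\sigma:=\theta s/(\theta-q)$, I multiply both sides by $\gamma\, g(\lambda)^{\gamma-1}g'(\lambda)$, raise to the power $\sigma/\gamma$ (replacing the $\sigma$-integration by the supremum when $s=\infty$), and integrate against $d\lambda/\lambda$. Via the layer-cake identity this turns the previous inequality into a comparison of $\|g(\Mb(|Du|))\|_{L(\gamma,\sigma)(B)}$ on the left with the same quantity multiplied by a factor $G^{-\chi\sigma/\gamma}(K)$ plus a term controlled by $\|\Mbm(\mu)\|_{L(q,s)(B)}$ after the nonlinear substitution $t=g(\ve\lambda)$. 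The exponents are chosen precisely so that the Lorentz identity $\gamma/\sigma=q/s$ and the Morrey balance $(\theta-n)/\gamma$ versus $(\theta-n)/q$ line up. Multiplying by $r_B^{(\theta-n)\sigma/\gamma}$ and taking the supremum over $B\subset B_R$ then recasts the ball-wise Lorentz quasinorms as Lorentz-Morrey quasinorms on $B_R$. Because $G^\chi(K)\to\infty$ as $K\to\infty$ (ensured by $i_G\geq 1$ and $\chi>1$), the gain factor can be made arbitrarily small, and is absorbed into the left-hand side through the standard a priori truncation at $\lambda\leq K^N$ followed by $N\to\infty$, as in~\cite{min-grad-est}. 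The initial term $g(\barint_{B_R}|Du|\,dx)$ arises from initializing the iteration at level $\lambda_0$, while the passage from $\|\Mbm(\mu)\|_{L^\theta(q,s)(B_R)}$ to $\|\mu\|_{L^\theta(q,s)(B_R)}$ comes from the boundedness of the restricted fractional maximal operator on Lorentz-Morrey spaces at the parameters under consideration. Combined with the pointwise bound $g(|Du|)\leq g(\Mb(|Du|))$ a.e., this gives the claimed estimate.

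The main obstacle is the interaction between the super-level set inequality, which is local to each sub-ball, and the supremum over sub-balls built into the Morrey part of the quasinorm: the absorption step can be executed only once one has a priori finite truncated Lorentz-Morrey norms uniformly in $B$, which forces an arrangement of the iteration on dyadic nested concentric balls and is responsible for the shrinkage from $B_R$ to $B_{R/4}$ in the final statement. A second delicate ingredient is to check that the nonlinear substitution $\lambda\mapsto g(\ve\lambda)$ preserves the Lorentz-Morrey quasinorm structure with the correct exponents; this uses both $g\in\Delta_2$ and $\wt g\in\Delta_2$, equivalently the sandwich $1\leq i_g\leq s_g<\infty$ from~\eqref{ig-sg}, and is the Orlicz counterpart of the straightforward power-rule substitution available in~\cite{min-grad-est}.
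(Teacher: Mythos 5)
Your Lorentz-side machinery is essentially the paper's: integrating the super-level set estimate of Proposition~\ref{prop:super-level-est} against $d\lambda/\lambda$ after the nonlinear substitution $\lambda\mapsto g(\lambda)$ (using $g,\wt g\in\Delta_2$, cf.\ Lemma~\ref{g<lambda}), absorbing via the smallness of $(HT)^{s_G-1}G^{-\chi/t}(HT)$ guaranteed by the upper bound on $q$, and converting $\Mbm(\mu)$ into $\|\mu\|_{L^\theta(q,s)}$ by the mapping property of the restricted fractional maximal operator (Lemma~\ref{lem:Riesz}~{\it iv)}, which lands in $L\bigl(\tfrac{\theta q}{\theta-q},\tfrac{\theta s}{\theta-q}\bigr)$ on the ball, not back in $L^\theta(q,s)$ --- a minor but real inconsistency in your exponent bookkeeping). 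This is exactly Proposition~\ref{prop:maximal-est} applied with $t=\tfrac{\theta q}{\theta-q}$, $\gamma=\tfrac{\theta s}{\theta-q}$.

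The genuine gap is in how you produce the Morrey part of the quasinorm. You propose to multiply the ball-wise Lorentz estimate by $r_B^{(\theta-n)\sigma/\gamma}$ and take the supremum over sub-balls $B\subset B_R$, claiming the initial term becomes $g\bigl(\barint_{B_R}|Du|\,dx\bigr)$ "from initializing the iteration at level $\lambda_0$". But the initialization on each sub-ball $B$ produces $g\bigl(\barint_{2B}|Du|\,dx\bigr)$, and the weighted supremum $\sup_B r_B^{(\theta-n)/t}\,g\bigl(\barint_{2B}|Du|\,dx\bigr)$ is in essence a Morrey-type norm of $g(|Du|)$ at integrability one; it is \emph{not} controlled by $g\bigl(\barint_{B_R}|Du|\,dx\bigr)$, and no amount of absorption in $\lambda$ fixes this, since the obstruction is the small-ball behaviour of the averages, not the level-set decay. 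The paper closes precisely this hole with a separate ingredient your sketch never invokes: the preliminary Morrey estimate of Proposition~\ref{prop:grad-u-est}, which bounds $[g(|Du|)]_{L^{1,\frac{\theta-q}{q}}}$ and is itself built from the comparison estimates in Morrey form (Corollary~\ref{coro:comparison}), the Morrey-type decay \eqref{inq:Morrey-continuity} of solutions to the homogeneous problem, and the iteration Lemma~\ref{lem:absorb2}; this is then fed into the ball-wise Lorentz estimate through Jensen's inequality, Lemma~\ref{lem:Emb-inq}, and the scaling of Remark~\ref{rem:Mor-scal}, exactly as in the proof of Theorem~\ref{theo:Morrey-est}. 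Without this intrinsic Morrey decay of $g(|Du|)$ (which encodes the PDE once more, beyond the super-level set estimate), your supremum-over-balls step does not yield the claimed right-hand side, so the argument as sketched would fail at that point.
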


\section{Preliminaries}\label{sec:prelim}

\subsection{Notation and basics}
In the following we shall adopt the customary convention of denoting by $c$ a constant that may vary from line to line. Sometimes to skip rewriting a constant, we use $\lesssim$. By $a\simeq b$, we mean $a\lesssim b$ and $b\lesssim a$. By $B_R$ we shall denote a ball usually skipping prescribing its center, when its is not important. Then by $cB=B_{cR}$ we mean then a ball with the same center as $B_R$, but with rescaled radius $cR$. 

 We consider the (restricted) maximal function operators related to a ball 
\begin{equation}\label{MDu-Mmu-def}
\MDu(x)=\sup\limits_{\substack{x\in B_R\\ B_R\subset 2B_0}}\barint_{B_R}|Du(y)|dy,\qquad \Mmu(x)=\sup\limits_{\substack{x\in B_R\\ B_R\subset 2B_0}} {|\mu(B_R)|}{R^{\frac{1}{n}-1}}. 
\end{equation}

\subsection{The Orlicz setting}\label{sec:Or}
We study the solutions to PDEs in the Orlicz-Sobolev spaces equipped with a modular function $B$ - an increasing and convex function satisfying~\begin{equation}
\label{iBsB}  1<i_B=\inf_{t>0}\frac{tB'(t)}{B(t)}\leq \sup_{t>0}\frac{tB'(t)}{B(t)}=  s_B<\infty.
\end{equation}

\begin{defi}[Orlicz-Sobolev space]\label{def:OrSob:sp} 

 By the Orlicz space ${L}_B(\Omega)$  we understand the space of measurable functions endowed with the Luxemburg norm 
\[||f||_{L_B}=\inf\left\{\lambda>0:\ \ \int_\Omega B\left( \frac{|f(x)|}{\lambda}\right)\,dx\leq 1\right\}.\]
 We define the Orlicz-Sobolev space  $W^{1,B}(\Omega)$  as follows
\begin{equation*} 
W^{1,B}(\Omega)=\big\{f\in L_B(\Omega):\ \ D f\in L_B(\Omega)\big\},
\end{equation*}endowed with the norm
\[
\|f\|_{W^{1,B}(\Omega)}=\inf\bigg\{\lambda>0 :\ \   \int_\Omega B\bigg( \frac{|f|}{\lambda}\bigg)dx+\int_\Omega B\bigg( \frac{|D f|}{\lambda}\bigg)dx\leq 1\bigg\} 
\]
and  by $W_0^{1,B}(\Omega)$ we denote a closure of $C_c^\infty(\Omega)$ under the above norm. 
\end{defi}
Directly from the definition of the norm we get the following information.
\begin{lem}\label{lem:norm} If ${B}$ is a Young function $B$ and $f\in L_B(\Omega)$, then  for every $\epsilon\in(0,1)$ we have
\[\epsilon \|f\|_{L_B(\Omega)}\leq \int_{\Omega}{B}(\epsilon |f|)\,dx+1.\]
\end{lem}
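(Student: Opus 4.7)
The plan is to unwind the definition of the Luxemburg norm: recall that $\|f\|_{L_B(\Omega)}\leq \lambda$ is equivalent to $\int_\Omega B(|f|/\lambda)\,dx\leq 1$. Setting $A:=1+\int_\Omega B(\epsilon|f|)\,dx$, so that $A\geq 1$, the target inequality $\epsilon\|f\|_{L_B(\Omega)}\leq A$ is equivalent to $\|f\|_{L_B(\Omega)}\leq A/\epsilon$, and the latter reduces to checking that
\[
\int_\Omega B\!\left(\frac{\epsilon|f|}{A}\right)dx\leq 1.
\]

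The main ingredient is the standard subhomogeneity consequence of convexity combined with $B(0)=0$: for any $\tau\in[0,1]$ and any $s\geq 0$ one has $B(\tau s)\leq \tau B(s)$. Applied pointwise with $\tau=1/A\in(0,1]$ and $s=\epsilon|f(x)|$, integration over $\Omega$ yields
\[
\int_\Omega B\!\left(\frac{\epsilon|f|}{A}\right)dx\leq \frac{1}{A}\int_\Omega B(\epsilon|f|)\,dx=\frac{A-1}{A}\leq 1,
\]
which is the inequality needed. Invoking the definition of the Luxemburg norm then gives $\epsilon\|f\|_{L_B(\Omega)}\leq A=1+\int_\Omega B(\epsilon|f|)\,dx$, as claimed.

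There is no real obstacle; the only small things worth noting are that the argument does not actually require $\epsilon<1$ (the restriction only fixes the regime in which the lemma will be applied), and that the case $\int_\Omega B(\epsilon|f|)\,dx=+\infty$ makes the conclusion trivial, so one may safely assume finiteness when applying the subhomogeneity step.
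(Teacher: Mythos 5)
Your proof is correct and is precisely the argument the paper has in mind: the paper states the lemma as following ``directly from the definition of the norm,'' and your unwinding via the subhomogeneity $B(\tau s)\leq\tau B(s)$ (convexity plus $B(0)=0$) applied with $\tau=1/A$, followed by the Luxemburg-norm definition, is exactly that direct verification. Your side remarks (that $\epsilon<1$ is not needed and that the infinite-integral case is trivial) are also accurate.
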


In the functional analysis of the Orlicz setting an important role is played by  $\wt{B}$ -- the defined below  complementary~function (called also the Young conjugate, or the Legendre transform) to a function  $B:\r\to\r$. The complementary~function is given by the following formula
\[\wt{B}(t)=\sup_{s>0}(s\cdot t-B(s)).\]

\begin{lem}[\cite{rao-ren}]\label{lem:conjugate} If $\wt{B}$ is a complementary function to a Young function $B$, then  $\wt{B}$ is also a Young function. Moreover, we have
\[t\leq B^{-1}(t)\wt{B}^{-1}(t)\leq 2t.\]
\end{lem}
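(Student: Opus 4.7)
The plan is to split the statement into two independent claims and dispatch them using only the defining identity $\wt B(t)=\sup_{s>0}(st-B(s))$ and Young's inequality $su\le B(s)+\wt B(u)$, which is a direct consequence of that definition. For the first claim, that $\wt B$ is itself a Young function, I would observe that $t\mapsto st-B(s)$ is affine in $t$ for each fixed $s$, so the pointwise supremum $\wt B$ is convex and lower semicontinuous; nonnegativity and $\wt B(0)=0$ follow from $B(s)\ge 0=B(0)$; monotonicity follows because each profile $t\mapsto st-B(s)$ with $s\ge 0$ is nondecreasing. The superlinear growth $\wt B(t)/t\to\infty$ comes by testing the supremum at a fixed large $s=K$: $\wt B(t)/t\ge K-B(K)/t\to K$ as $t\to\infty$, and $K$ is arbitrary. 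I would treat this first part briefly, since it is folklore.

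For the quantitative inequality, the upper bound $B^{-1}(t)\wt B^{-1}(t)\le 2t$ is the easy half. Taking $s=B^{-1}(t)$ and $u=\wt B^{-1}(t)$ in Young's inequality $su\le B(s)+\wt B(u)$ and using $B(B^{-1}(t))=t=\wt B(\wt B^{-1}(t))$ gives exactly $B^{-1}(t)\wt B^{-1}(t)\le t+t=2t$.

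The lower bound $t\le B^{-1}(t)\wt B^{-1}(t)$ is where the real content lies, and the plan is to prove the auxiliary identity
\[
\wt B\!\left(\tfrac{B(s)}{s}\right)\le B(s)\qquad\text{for every }s>0.
\]
Unpacking the supremum, this reduces to verifying $r\,B(s)/s-B(r)\le B(s)$ for every $r>0$, i.e.\ $B(s)(r-s)/s\le B(r)$. For $r<s$ the left side is negative and $B(r)\ge 0$; for $r\ge s$ one uses the standard fact that for a convex function with $B(0)=0$ the ratio $B(r)/r$ is nondecreasing, so $B(r)\ge rB(s)/s\ge (r-s)B(s)/s$. Substituting $s=B^{-1}(t)$ into the auxiliary identity yields $\wt B(t/B^{-1}(t))\le t$, and applying $\wt B^{-1}$ (which is nondecreasing) gives $t/B^{-1}(t)\le \wt B^{-1}(t)$, i.e.\ the desired lower bound.

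The one technical point to watch is that $B$ may fail to be strictly increasing, so $B^{-1}$ must be understood as a generalised (right-continuous) inverse; I would use $B(B^{-1}(t))=t$ only in the weak form which is enough for Young's inequality, and interpret the argument of $\wt B^{-1}$ accordingly. No other step presents a genuine obstacle.
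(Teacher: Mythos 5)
Your argument is correct and is essentially the standard proof of this classical fact; the paper itself gives no proof, citing it from Rao--Ren. Note that your auxiliary inequality $\wt B\left(B(s)/s\right)\leq B(s)$ is exactly the paper's Lemma~\ref{lem:G*} (with constant $1$), and the rest (Young's inequality for the upper bound, the generalised-inverse caveat) is handled correctly.
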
 

\subsubsection*{Growth}
 We would like to comment the growth condition under which we work. The typical assumption on the growth within the Orlicz classes is the following one.

\begin{defi}[$\Delta_2$-condition, doubling condition]\label{def:D2}
 We say that a function $B:\r\to\r$ satisfies $\Delta_2$-condition if there exists a constant $c_{\Delta_2}>0$ such that $B(2s)\leq c_{\Delta_2}B(s).$
\end{defi} 

It describes the speed and regularity of the growth. For instance when $B(s) = (1+|s|)\log(1+|s|)-|s|$, its complementary function is  $\widetilde{B}(s)= \exp(|s|)-|s|-1$. Then $B\in\Delta_2$ and   $\widetilde{B}\not\in\Delta_2$.

 We point out that the condition~\eqref{iBsB}  is equivalent to $B,\wt{B}\in\Delta_2$,~\cite[Section~2.3, Theorem~3]{rao-ren}. Indeed, if $s_B<\infty$ then $B\in\Delta_2$, whereas $i_B>1$ entails the $\Delta_2$-condition imposed on $\wt{B}$. It also implies a to comparison with power-type functions in the sense that when $B$ satisfies~\eqref{iBsB}, then
\begin{equation}
\label{B-power-compar}
\frac{B(t)}{t^{i_B}}\quad\text{is non-decreasing}\qquad\text{and}\qquad\frac{B(t)}{t^{s_B}}\quad\text{is non-increasing}.
\end{equation}
 On the other hand, \cite[Example~3.2]{CGZG} shows that being trapped between two power-type functions is not enough for a convex and continuous function to satisfy the $\Delta_2$-condition.  Therefore, the assumption~\eqref{iBsB} is more restrictive than assumption on $p,q$--growth, as it influence on both regularity of the growth and its speed.

\begin{rem}[\cite{adams-fournier}] Since condition~\eqref{iBsB} imposed on $B$ implies $B,\wt{B}\in\Delta_2$, the Orlicz-Sobolev space $W^{1,B}(\Omega)$ we deal with is separable and reflexive.
\end{rem}
 
\begin{lem}[\cite{rao-ren}]\label{lem:D2} If $B\in\Delta_2$, then $B(r+s)\lesssim B(r)+B(s)$.
\end{lem}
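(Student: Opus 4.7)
The plan is to reduce the inequality to a single application of the $\Delta_2$-condition. By the symmetry of the claim in $r$ and $s$, I may assume without loss of generality that $0\le r\le s$, so that $r+s\le 2s$. Since every Young function is non-decreasing on $[0,\infty)$ (being convex with $B(0)=0$) and non-negative, monotonicity gives $B(r+s)\le B(2s)$, and a single use of the doubling hypothesis from Definition~\ref{def:D2} bounds this by $c_{\Delta_2}B(s)\le c_{\Delta_2}\bigl(B(r)+B(s)\bigr)$. This establishes the lemma with implicit constant $c_{\Delta_2}$.

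There is essentially no obstacle here: the whole content of the inequality is the $\Delta_2$-condition itself, and the auxiliary ingredients (monotonicity and non-negativity) are already built into the definition of a Young function. A minor alternative route avoids the symmetry reduction by invoking convexity directly, namely $B(r+s)=B\bigl(\tfrac{2r+2s}{2}\bigr)\le\tfrac{1}{2}B(2r)+\tfrac{1}{2}B(2s)\le\tfrac{c_{\Delta_2}}{2}\bigl(B(r)+B(s)\bigr)$, which produces a slightly sharper constant; for the qualitative $\lesssim$ version used in the sequel either variant suffices.
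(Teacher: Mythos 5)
Your argument is correct: the reduction $r\le s$, monotonicity, and one application of the doubling condition (or, alternatively, the convexity splitting $B(r+s)\le\tfrac12 B(2r)+\tfrac12 B(2s)$) is exactly the standard proof, and the paper itself offers no proof here, simply citing \cite{rao-ren}. Nothing further is needed.
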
 
 
\subsubsection*{Embeddings}
For Sobolev--Orlicz spaces expected embedding theorems hold true. We distinguish two possible behaviours of $B$
\begin{equation}
\label{intB}
\int^\infty\left(\frac{t}{B(t)}\right)^\frac{1}{n-1}dt=\infty \qquad\text{and}\qquad
\int^\infty\left(\frac{t}{B(t)}\right)^\frac{1}{n-1}dt<\infty,
\end{equation}
which roughly speaking describe slow an fast growth of $B$ in infinity, respectively. The condition imposing slow growth of $B$, namely \eqref{intB}$_1 $, corresponds to  the case of $p$-growth with $p\leq n$. Then we expect $W_0^{1,B}\hookrightarrow{} L_{\hat{B}}$ with $\hat{B}$ growing faster than $B$ (it is presented below). In the case of quickly growing modular function, i.e. when~\eqref{intB}$_2$ holds (corresponding to $p>n$), it holds that $W_0^{1,B}\hookrightarrow{}L^\infty$. Below we give details.  

We apply the optimal embeddings due to~\cite{Ci96-emb}, where  the Sobolev inequality is proven under the restriction 
\begin{equation}\label{int0B}\int_0\left(\frac{t}{B(t)}\right)^\frac{1}{n-1}dt<\infty, 
\end{equation} 
concerning the growth of $B$ in the origin. Nonetheless, the properties of $L_B$ depend on the behaviour of $B(s)$ for large values of $s$ and~\eqref{int0B} can be easily by-passed in application. When we consider 
\begin{equation}\label{BN}
H_n(s)=\left(\int_0^s\left(\frac{t}{B(t)}\right)^\frac{1}{n-1}dt\right)^\frac{n-1}{n} \qquad \text{and}\qquad
B_n(t)=B(H_n^{-1}(t)), 
\end{equation}
the following result follows.

\begin{theo}[Sobolev embedding, \cite{Ci96-emb}]\label{theo:Sob-emb} Let $\Omega\subset\rn$, $n>1$, be a bounded open set.
\begin{itemize}
\item[(slow)] If $B$ is a Young function satisfying \eqref{int0B} and \eqref{intB}$_1 $, then there exists a constant $c_s=c_s(n)$, such that for every $u\in W_0^{1,B}(\Omega)$ it holds that \[\int_\Omega B_n\left(\frac{|u|}{c_s\big(\int_\Omega B(|Du|)dx\big)^\frac{1}{n}}\right)dx\leq \int_\Omega B(|Du|)dx.\]

\item[(fast)]  If $B$ is a Young function satisfying \eqref{intB}$_2 $, then then there exists a constant $c(n)$, such that for every $u\in W_0^{1,B}(\Omega)$ it holds that \[\|u\|_{L^\infty(\Omega)}\leq \|Du\|_{L_B(\Omega)}.\]
\end{itemize}
\end{theo}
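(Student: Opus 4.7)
The plan is to follow Cianchi's rearrangement approach, reducing both parts to sharp one-dimensional inequalities for the decreasing rearrangement. Given $u\in W_0^{1,B}(\Omega)$, extend by zero to $\rn$ and pass to the spherical symmetric decreasing rearrangement $u^\star$, which by the Pólya--Szegő principle for Orlicz spaces (a consequence of coarea, the Euclidean isoperimetric inequality, and the convexity of $B$) satisfies $\int_\Omega B(|Du^\star|)\,dx\leq \int_\Omega B(|Du|)\,dx$. Writing polar coordinates with $s=\omega_n r^n$ and denoting by $u^*:(0,|\Omega|)\to[0,\infty)$ the standard one-dimensional decreasing rearrangement, a direct computation yields the twin identities
\[\int_\Omega B(|Du^\star|)\,dx = \int_0^{|\Omega|} B(\phi(s))\,ds,\qquad u^*(s)=\int_s^{|\Omega|}\frac{\phi(\tau)}{n\omega_n^{1/n}\tau^{1-1/n}}\,d\tau,\]
where $\phi(s):=n\omega_n^{1/n}s^{1-1/n}(-(u^*)'(s))\geq 0$. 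It suffices to prove each embedding as a weighted one-dimensional inequality for this pair $(u^*,\phi)$.

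For the slow case, by homogeneity one normalises $\int_0^{|\Omega|} B(\phi)\,ds=1$ and must verify $\int_0^{|\Omega|} B_n(u^*(s)/c_s)\,ds\leq 1$. The definition~\eqref{BN} of $B_n=B\circ H_n^{-1}$ is dictated precisely by the substitution $\tau=H_n^{-1}(\eta)$: the identity
\[H_n'(s)=\frac{n-1}{n}H_n(s)^{-1/(n-1)}\left(\frac{s}{B(s)}\right)^{1/(n-1)}\]
is arranged so that the weight $\tau^{-(1-1/n)}$ appearing in the representation of $u^*$ is exactly absorbed by $(H_n^{-1})'$ up to a dimensional factor. After this change of variable, Jensen's inequality applied to the concave function $B^{-1}$ against the probability measure $B(\phi(\tau))\,d\tau/\int_s^{|\Omega|} B(\phi)$ produces a pointwise bound $u^*(s)\leq c_s H_n^{-1}(\alpha(s))$ with $\alpha(s)$ controlled by $\int_s^{|\Omega|} B(\phi)$. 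Since $B_n=B\circ H_n^{-1}$, evaluating $B_n(u^*(s)/c_s)$ and integrating in $s$ closes the estimate; condition~\eqref{intB}$_1$ guarantees that $H_n$ is surjective onto an interval large enough for the change of variable, while~\eqref{int0B} secures finiteness of $H_n(s)$ near the origin.

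The fast case is simpler. Since $\|u\|_{L^\infty(\Omega)}=u^*(0)=\int_0^{|\Omega|}\phi(\tau)/(n\omega_n^{1/n}\tau^{1-1/n})\,d\tau$, the Hölder inequality in Orlicz spaces (built from Lemma~\ref{lem:conjugate}) gives
\[\|u\|_{L^\infty(\Omega)}\leq 2\,\|\phi\|_{L_B(0,|\Omega|)}\,\bigl\|(n\omega_n^{1/n})^{-1}\tau^{-(1-1/n)}\bigr\|_{L_{\wt B}(0,|\Omega|)},\]
and a routine computation using $t\leq B^{-1}(t)\wt B^{-1}(t)\leq 2t$ translates finiteness of the $L_{\wt B}$-norm of the weight $\tau^{-(1-1/n)}$ near zero into the convergence condition~\eqref{intB}$_2$. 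Combined with the Pólya--Szegő estimate $\|\phi\|_{L_B(0,|\Omega|)}\leq\|Du\|_{L_B(\Omega)}$, this yields the $L^\infty$ bound with a constant depending only on $n$. The principal obstacle throughout is the slow case, namely the identification of $B_n$ as the sharp optimal target: the definition~\eqref{BN} is engineered exactly so that the weighted Hardy inequality closes up with equality-like precision, and the naive Young estimate $ab\leq B(a)+\wt B(b)$ would produce only a strictly coarser embedding, missing both the sharp constant and the correct Sobolev conjugate.
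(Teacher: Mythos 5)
First, a point of reference: the paper does not prove this theorem at all — it is quoted verbatim from Cianchi \cite{Ci96-emb} — so there is no internal proof to measure your argument against; you are in effect reproving Cianchi's embedding theorem. At the level of architecture your reconstruction is the correct one and is indeed Cianchi's: Pólya--Szegő symmetrization, the reduction to a one-dimensional inequality for the pair $(u^*,\phi)$ with $\phi(s)=n\omega_n^{1/n}s^{1-1/n}(-(u^*)'(s))$, and the recognition that the definition \eqref{BN} of $H_n$ and $B_n$ is tailored to the weight $\tau^{1/n-1}$. The fast case is essentially complete: the Orlicz--Hölder estimate plus the classical equivalence $\int^\infty \wt{B}(t)\,t^{-1-\frac{n}{n-1}}\,dt<\infty\iff\int^\infty(t/B(t))^{1/(n-1)}dt<\infty$ (which does follow from Lemma~\ref{lem:conjugate} and an integration by parts) closes it; the only imprecision is that the resulting constant, namely $\|\tau^{-(n-1)/n}\|_{L_{\wt{B}}(0,|\Omega|)}$, depends on $B$ and $|\Omega|$ rather than on $n$ alone — though the paper's own formulation of the (fast) case is equally loose on this point.

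The genuine gap is in the slow case, at precisely the step that carries all of the difficulty. Jensen's inequality for the concave function $B^{-1}$ against the probability measure $B(\phi(\tau))\,d\tau/\int_s^{|\Omega|}B(\phi)$ controls unweighted averages of $\phi=B^{-1}(B(\phi))$, but the quantity you must bound is $\int_s^{|\Omega|}\phi(\tau)\tau^{1/n-1}\,d\tau$, and the singular weight $\tau^{1/n-1}$ cannot be absorbed into that measure; applied literally, the step does not yield the claimed pointwise bound. Moreover the bound you assert, $u^*(s)\le c_sH_n^{-1}(\alpha(s))$, is oriented the wrong way: since $B_n=B\circ H_n^{-1}$, to conclude $\int_0^{|\Omega|} B_n(u^*(s)/c_s)\,ds\le\int_0^{|\Omega|} B(\phi)\,d\tau$ one needs to control $H_n^{-1}(u^*(s)/c_s)$, i.e.\ an estimate of the form $u^*(s)\le c_sH_n(\beta(s))$ — and even then a pointwise bound alone does not suffice, since the resulting function of $s$ must still be integrable. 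Cianchi's actual one-dimensional lemma normalizes $\int B(\phi)\,d\tau=1$ and splits $\int_s^\infty\phi(\tau)\tau^{1/n-1}\,d\tau$ according to whether $\phi(\tau)$ exceeds $B^{-1}(1/\tau)$: on one regime $\tau^{1/n-1}\le B(\phi(\tau))^{1-1/n}$ is exploited, on the other the explicit integral $\int_s^\infty B^{-1}(1/\tau)\tau^{1/n-1}\,d\tau$ is computed by the substitution $\sigma=B^{-1}(1/\tau)$ and compared with $H_n(B^{-1}(1/s))$. Without this two-regime argument (or an equivalent substitute) the core inequality of the slow case is asserted rather than proved.
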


\subsection{Notion of SOLA}\label{ssec:SOLA}
 Investigating the general elliptic Dirichlet problem 
\begin{equation} \label{intro:ell:f}
- \dv \,a(x,D u)= f
\end{equation} 
involving $a$~from an Orlicz class and on the right--hand side data merely integrable or in the space of~measures special notion of solutions need to be introduced. Indeed, the weak formulation of~\eqref{intro:ell:f}, i.e.
\[\int_\Omega a(x,D u)D\vp\,dx=\int_\Omega f\vp\,dx,\]
is expected to hold for every $\vp$ in the Orlicz-Sobolev space $W_0^{1,G}(\Omega)$. There are at least three different classical approaches to this kind of problems keeping uniqueness even under weak assumptions on the data. The notion of renormalized solutions  appeared first in~\cite{diperna-lions}, whereas the entropy solutions comes from~\cite{bbggpv,dall}. The SOLA are studied starting from~\cite{bgSOLA-jfa,bgSOLA-cpde,bgo}. See also~\cite{ACM,FS,Rak} for other classical results.

To consider the datum $f$ not belonging to the dual space, we adopt the notion of SOLA. However, under certain restrictions the mentioned notions coincide~\cite{KiKuTu}, which suggests that the gradient estimates we obtain for SOLA, can be shared by the other types of solutions.

\begin{defi}[Local SOLA]\label{def:SOLA}
A function $u\in W^{1,1}_{loc}
(\Omega)$ is called a local {SOLA} to~\eqref{intro:eq:main} if  problems \begin{equation}
\label{eq:main-k-for-SOLA}
- \dv\, a(x, Du_k ) = f_k=\mu_k\in L^\infty(\Omega)
\end{equation} with $\mu_k\to\mu\in{\cal M}(\Omega)$ $*$--(locally)--weakly in the sense of measures, that is $
\lim_{k\to\infty}\int_\Omega \vp\,f_k\,dx=\int_\Omega\vp\,d\mu$ for every  continuous function $\vp$ with compact support in $\Omega$, and satisfying
$\limsup_{k\to\infty} |\mu_k |(B) \leq |\mu|(B)$
for every ball $B\subset\Omega$  
have solutions  $\{u_k\}_k\subset W^{1,G}_{loc}
(\Omega)$ such that \[u_k\xrightarrow[k\to\infty]{} u\quad\text{ strongly\ \ in\ \  }W^{1,1}_{loc}
(\Omega)\qquad\text{and}\qquad g(|D u_k|)\xrightarrow[k\to\infty]{}g(|D u|)\quad\text{strongly\ in \  }L^{1}_{loc}(\Omega).\]
\end{defi}
For the existence of such solutions results see e.g.~\cite{CiMa} and considerations in~\cite[Section 7]{Baroni-Riesz}. Note that the uniqueness is kept within this notion of solutions if only the data $\mu$ is locally integrable, while for general measure data it is an open problem.

\section{Auxiliary results}\label{sec:aux}
In order to compare the properties of solutions to our main equation to the solutions to the homogeneous equation (i.e. null-data one) first we prove some integrability results for solutions to homogeneous problem itself and then we infer comparison estimates.

\subsection{Homogeneous problem}\label{ssec:homog}

This subsection is devoted to various estimates for $v$ solving the homogeneous problem
\begin{equation}\label{eq:homog} 
-{\dv}\, a(x,Dv)=0.
\end{equation} 

\begin{prop}[Estimates for the homogeneous problem]\label{prop:homo-problem} Suppose $B_{2R}\subset\subset A\subset\rn$, $A$ is a bounded set, and $v \in W^{1,G} (A)$ is a weak solution to~\eqref{eq:homog} on $A$, where $a:\rn\to\rn$ and $G,g:[0,\infty)\to[0,\infty)$ satisfy Assumption~(A). Then
\begin{itemize}
\item[(i)]  there exists a constant $c=c(n,\nu,L,s_G)$, such that\begin{equation}
\label{rev-Hold}
\barint_{B_{R }}G(|Dv|)\, d x \leq c\,G\left(\barint_{B_{2R} }   
|Dv|\, d x \right),
\end{equation}

\item[(ii)] then there exist $ {c}_1,{c}_2>0$ and $\chi>1$, such that
\begin{equation}\label{higher-int}\barint_{B_R} G^{\chi}(|D v|)\,dx \leq  {c}_1\, G^{\chi}\left(\barint_{B_{2R}}   |D v|  \,dx\right)+ {c}_2,\end{equation}

\item[(iii)]there exists $c>0$, such that  
\begin{equation}
\label{inq:cacc} 
\int_{B_{ R}}G(|Dv|)\,dx\leq c \int_{B_{2R}}G\left(\frac{|v-(v)_{B_R}|}{R}\right)
dx,
\end{equation}

\item[(iv)] for $\vr<R $,  there exist $c,\beta>0$, such that 
\begin{equation}
\label{inq:Morrey-continuity} \barint_{B_{\vr }}g(|Dv|)dx\leq c\left(\frac{\vr}{R}\right)^{-\beta} \barint_{B_{2R}}g (|Dv|)dx.
\end{equation}
\end{itemize}

\end{prop}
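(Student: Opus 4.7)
The proof naturally splits into the chain $(\text{iii}) \Rightarrow (\text{i}) \Rightarrow (\text{ii})$, with~(iv) handled separately. I would start with~(iii), testing the weak form of~\eqref{eq:homog} against $\vp = (v-(v)_{B_R})\eta^{s_G}$ for a cutoff $\eta \in C_c^\infty(B_{2R})$ with $\eta \equiv 1$ on $B_R$. Integrating $\partial_z a(x,\cdot)$ from $0$ in the $z$-variable, Assumption~(A) yields the ellipticity bound $\langle a(x,z), z\rangle \gtrsim g(|z|)|z| \simeq G(|z|)$ (the comparison coming from $i_G, s_G < \infty$ together with $G'=g$), while the growth bound gives $|a(x,z)| \lesssim g(|z|)$. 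The cross term $\int \eta^{s_G-1} g(|Dv|)\,|v-(v)_{B_R}|\,|D\eta|$ is then split by an Orlicz Young inequality in the pair $(G,\wt G)$: using $\wt G(g(t)) \simeq G(t)$, the $g(|Dv|)$ contribution is absorbed into the left after passage through $\wt G$, while the remainder collapses to $\int G(|v-(v)_{B_R}|/R)$, both steps exploiting $\Delta_2$ for $G$ and $\wt G$. This delivers~\eqref{inq:cacc}.

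For~(i), I would combine~(iii) with the Orlicz--Sobolev--Poincar\'e inequality, which bounds $\int_{B_{2R}} G(|v-(v)_{B_R}|/R)$ by a quantity involving $Dv$ in an Orlicz class of strictly weaker growth (the analogue of the Sobolev conjugate $p_* = np/(n+p)$). This already yields a Gehring-type reverse H\"older with an exponent deficit. To upgrade to the sharper form~\eqref{rev-Hold}, my preferred route is to invoke the local Lipschitz estimate $\sup_{B_{R}}|Dv| \leq c\,\barint_{B_{2R}}|Dv|\,dx$, valid for weak solutions of the $x$-independent homogeneous Orlicz problem (of Lieberman type, cf.~\cite{Lieb91,Lieb93,Baroni-Riesz}), and combine it with monotonicity of $G$ and $G \in \Delta_2$:
\[\barint_{B_R} G(|Dv|)\,dx \leq G\bigl(\sup_{B_R}|Dv|\bigr) \leq G\Bigl(c\,\barint_{B_{2R}}|Dv|\,dx\Bigr) \lesssim G\Bigl(\barint_{B_{2R}}|Dv|\,dx\Bigr).\]
Item~(ii) is then a direct consequence of~(i) via an Orlicz version of Gehring's lemma, which converts a reverse H\"older inequality with $L^1$ right-hand side into a self-improvement to $L^\chi$ for some $\chi>1$; the additive constant $c_2$ in~\eqref{higher-int} absorbs the localization loss arising when passing between concentric balls.

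For~(iv), the cleanest argument is the trivial volume comparison: since $B_\vr \subset B_{2R}$,
\[\barint_{B_\vr} g(|Dv|)\,dx \leq \left(\frac{2R}{\vr}\right)^n \barint_{B_{2R}} g(|Dv|)\,dx,\]
giving~\eqref{inq:Morrey-continuity} with $\beta = n$. A smaller $\beta$ could be extracted from H\"older continuity of $Dv$ for the homogeneous problem, but the coarse bound already suffices for later use.

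The main obstacle I anticipate lies in step~(i): upgrading from a Gehring-style reverse H\"older with exponent deficit (delivered directly by Caccioppoli plus Orlicz--Sobolev--Poincar\'e) to the sharper reverse-Jensen form of~\eqref{rev-Hold}. In the $p$-Laplace case this is classical, but in the Orlicz setting one must carefully track how Young-conjugate exponents interact with the $\Delta_2$ constants of $G$ and $\wt G$. Invoking the Lieberman-type Lipschitz estimate sidesteps the iteration entirely, but is itself a nontrivial interior regularity result, whose applicability here rests critically on the $x$-independence of $a$ assumed in this proposition.
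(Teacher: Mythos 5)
Items (ii) and (iii) of your plan coincide with the paper's: (iii) is proved exactly by testing with $\eta^q(v-(v)_{B_R})$, an Orlicz--Young splitting with $\wt G(g(t))\lesssim G(t)$ and absorption, and (ii) is obtained from (i) by a Gehring-type self-improvement (the paper cites Fusco--Sbordone). The divergence is in (i), and it hides a real problem. You discharge the hard step onto the Lieberman-type Lipschitz bound $\sup_{B_R}|Dv|\leq c\,\barint_{B_{2R}}|Dv|\,dx$. But the proposition has to serve comparison maps $v$ solving \eqref{eq:comp-map}, where the vector field is the same $a(x,\cdot)$ of Assumption (A), i.e.\ merely \emph{measurable} in $x$ (the ``$a:\rn\to\rn$'' in the statement is a slip: the equation \eqref{eq:homog} is $-\dv\,a(x,Dv)=0$, and the paper's proof explicitly says it adapts the $x$-independent argument of Baroni to this case). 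For measurable coefficients, gradient sup-bounds (and H\"older continuity of $Dv$) are false already for linear equations, so your shortcut proves a statement too weak for the later applications (Lemma~\ref{lem:Binc}, Proposition~\ref{prop:grad-u-est}). Your fallback (Caccioppoli plus Orlicz--Sobolev--Poincar\'e) only gives the deficit-form reverse H\"older $\barint_{B_R}G(|Dv|)\lesssim\big(\barint_{B_{2R}}G^{1/n'}(|Dv|)\big)^{n'}$, and you offer no argument for upgrading it to the reverse-Jensen form \eqref{rev-Hold}; that upgrade is precisely the content of the paper's proof, which uses the Cianchi--Fusco inequality with $S(t)=G^{\frac{n-1}{n}}(t)t^{1/n}$, a Young-inequality splitting with the conjugate of $C(t)=S(t^n)$ via Lemma~\ref{lem:G*}, and a covering plus the absorption Lemma~\ref{lem:absorb1} --- an argument needing no continuity in $x$.

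Item (iv) is the clearest gap. The volume comparison gives \eqref{inq:Morrey-continuity} only with $\beta=n$, i.e.\ $\int_{B_\vr}g(|Dv|)\leq c\int_{B_{2R}}g(|Dv|)$, which contains no decay at all; while this satisfies the statement as literally written, the substance of (iv) is an exponent $\beta$ strictly below $n$ (in fact small). In Proposition~\ref{prop:grad-u-est} the estimate enters as the decay factor $(\vr/R)^{n-\beta}$ fed into Lemma~\ref{lem:absorb2}, which requires a decay exponent $\delta$ strictly larger than $\gamma=n-\frac{\theta-q}{q}>0$; with $\beta=n$ the hypothesis fails and no Morrey estimate follows, so your claim that ``the coarse bound already suffices for later use'' is false, and your alternative (H\"older continuity of $Dv$) again presupposes regular coefficients. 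The paper obtains a genuine $\beta$ purely from energy estimates: (iii) combined with the Sobolev inequality for the convex function $G^{1/n'}$ yields \eqref{continuity-Morrey-G}, and then \eqref{rev-Hold}, Jensen's inequality and the concavity of $g\circ G^{-1}$ convert it into \eqref{inq:Morrey-continuity} with $\beta=\beta_1(i_G-1)/s_G$. To repair your proof you must reproduce an argument of this kind for (iv), and prove (i) without pointwise gradient bounds.
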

\begin{proof} $ $

\textit{(i)} The reverse  H\"older  inequality~\eqref{rev-Hold} is obtained as in~\cite[Lemma 4.2]{Baroni-Riesz} for the problem with the leading part of the operator independent of $x$, but the proof is essentially the same. We provide its short version for the sake of completeness. For more comments see~\cite{Baroni-Riesz}. According to~\cite[(1.11)]{Cianchi-Fusco} we have
\begin{equation}
\label{in:ci-fu}
\barint_{B_{\vr/2(y) }}G(|Dv|)\, d x \leq c\,(G\circ S^{-1})\left(\barint_{B_{\vr(y)} }S(   
|Dv|)\, d x \right)
\end{equation}
with $c=c(n,\nu,L,s_G)$, $B_\vr(y)\subset B_R$, and function $S(t):=G^{\frac{n-1}{n}}(t)t^{1/n}$.

It suffices to prove the inequality for $R=1$. Indeed, the general case can be deduced then by considering $\wt{v}(x)=v(x_0+Rx)/R$ solving $-\dv\,a(x,D\wt{v})=0$ on $B_1(0)$. Then having~\eqref{rev-Hold} for $\wt{v}$ and rescaling back, we get the final claim.

Thus, from now on $R=1$. Let us fix $r\leq 1$, $\alpha\in (0,1)$, $y\in B_{\alpha r}(x_0)$, and $\vr=(1-\alpha) r$. Note that $B_{\vr}(y)\subset B_{r}(y).$ Moreover, using H\"older's inequality we have
\[
\barint_{B_{\vr}}S(|Dv|)\, d x =  \barint_{B_{\vr(y)} } G^\frac{n-1}{n}(   
|Dv|)\cdot|Dv|^\frac{1}{n}\, d x \leq \left(\barint_{B_{\vr(y)} } G (   
|Dv|) \, d x \right)^\frac{n-1}{n}\left(\barint_{B_{\vr(y)} } |Dv|\, d x \right)^\frac{1}{n}. \]
We aim at estimating the right-hand side above using the Young inequality. Let us consider a Young function $C(t):=S(t^n)$ and its complementary function $\wt{C}$. Then $i_C\leq 2n-1$ and $s_C\leq (n-1)s_G+1$.  If necessary for ensuring convexity, we shall consider $C_1(t)=\int_0^t C(s)/s\,ds\simeq C(t)$.  We obtain
\[
\barint_{B_{\vr}}S(|Dv|)\, d x \leq \epsilon\wt{C}\left(\left[\barint_{B_{\vr(y)} } G (   
|Dv|) \, d x \right]^\frac{n-1}{n}\right)+c(\epsilon)C\left(\left[\barint_{B_{\vr(y)} } |Dv|\, d x \right]^\frac{1}{n}\right). \]

Let us concentrate on the first term on the right-hand side above. When we denote $T(t):=S^n(t)$, we have
\[\wt{C}\big(\alpha^\frac{n-1}{n}\big)=
\sup_{s>0} \left(\alpha^\frac{n-1}{n}s-S(s^n)\right)
\lesssim 
\left[ \sup_{\sigma>0} \left(\alpha^{n-1}\sigma-S^n(\sigma)\right)\right]^\frac{1}{n}=
\big[\wt{T} \left(\alpha^{n-1} \right)\big]^\frac{1}{n},\]
where we can assume that $\alpha^\frac{n-1}{n}s\geq S(s^n)$. Lemma~\ref{lem:G*} implies then that
\[\wt{T}\left(G^{n-1}(\tau)\right)=\wt{T}\left(T(\tau)/\tau\right)\leq  {T}\left( \tau \right)=S^n(\tau).\]
Applying it to $\tau =G^{-1}(\alpha)$ within our set of $s$ we have
\[\wt{T}\left(\alpha^{n-1}\right)\leq \left(S \circ G^{-1}(\alpha)\right)^n,\]
where, finally, putting $\alpha=\int_{B_r} G(|Dv|)\,dx$, we get
\[  \wt{C}\left(\left[\barint_{B_{\vr(y)} } G (   
|Dv|) \, d x \right]^\frac{n-1}{n}\right)\lesssim (S\circ G^{-1})\left( \barint_{B_{\vr(y)} } G(|Dv|)\, d x  \right). \]

Summing up the above observations (recall $\vr=(1-\alpha)r$) and choosing appropriate $\epsilon>0$ we obtain
\[
\barint_{B_{(1-\alpha)r}}G(|Dv|)\, d x \leq \frac{1}{2} \barint_{B_{(1-\alpha)r} } G (   
|Dv|) \, d x+c G\left( \barint_{B_{(1-\alpha)r}} |Dv|\, d x\right), \]
and consequently
\[
\int_{B_{(1-\alpha)r}}G(|Dv|)\, d x \leq \frac{1}{2} \int_{B_{(1-\alpha)r} } G (  
|Dv|) \, d x+\frac{c}{[(1-\alpha)r]^{ns_G}} G\left( \int_{B_{(1-\alpha)r}} |Dv|\, d x\right).\]
Since the ball $B_{\alpha r}$ can be covered by family of balls included in $B_r$ such that only a finite and independent of $\alpha$ number of balls of double radius intersect, we set $\alpha r= s<r$ and infer that
\[
\int_{B_{s}}G(|Dv|)\, d x \leq \frac{1}{2} \int_{B_{ r} } G (  
|Dv|) \, d x+\frac{c}{ (r-s)^{ns_G}} G\left( \int_{B_{r}} |Dv|\, d x\right).\]
Now Lemma~\ref{lem:absorb1} gives the claim for $R=1$ and, as noticed above, the proof is complete.

\bigskip

\textit{(ii)} Higher inegrability~\eqref{higher-int} can be obtained as a direct consequence of reverse H\"older inequality~\eqref{rev-Hold} and~\cite[Proposition~2.1]{Fusco-Sbordone}.

\bigskip

\textit{(iii)} To get Caccioppoli estimate~\eqref{inq:cacc} let us take a hat-function $\eta\in C_c^\infty(B_{2R})$, such that $\mathds{1}_{B_R}\leq\eta\leq \mathds{1}_{B_{2R}}$ and $|D\eta|\leq c/R$. We test~\eqref{eq:comp-map} with $\xi=\eta^q(v-(v)_{B_R})$, where $q>1$ is to be chosen soon, to get
\[\langle a(x,Dv),\eta^q Dv\rangle=-\langle a(x,Dv),q\eta^{q-1}(v-(v)_{B_R})D\eta \rangle.\]
Therefore, due to monotonicity of $a$ and the Cauchy-Schwartz inequality we have
\begin{equation}
\label{cac-inq-test}
\int_{B_{2R}}G(|Dv|)\eta^qdx\leq c\int_{B_{2R}}g(|Dv|)\eta^{q-1}\frac{|v-(v)_{B_R}|}{R}
dx.
\end{equation}
We choose $q$ big enough to satisfy $(1+i_G)\geq q'$ and notice that then we have $G(\eta^{q-1}t)\leq c \eta^qG(t)$. Therefore due to Lemma~\ref{lem:G*}  we get\[\wt{G}(\eta^{q-1}g(t))\leq c \eta^q \wt{G}(g(t))\leq c \eta^q G(t).\] 
Then, the Young inequality with an $N$-function $G$ and its conjugate $\wt{G}$ applied on the right-hand side of~\eqref{cac-inq-test} enables us to write
\begin{equation*}
\begin{split}\int_{B_{2R}}g(|Dv|)\eta^{q-1}\frac{|v-(v)_{B_R}|}{R}
dx&\leq \ve \int_{B_{2R}}\wt{G}(\eta^{q-1}(|Dv|)) dx+ c_\ve \int_{B_{2R}}G\left(\frac{|v-(v)_{B_R}|}{R}\right)
dx\\
&\leq \ve c\int_{B_{2R}} \eta^{q }G(|Dv| ) dx+ c_\ve \int_{B_{2R}}G\left(\frac{|v-(v)_{B_R}|}{R}\right)
dx.
\end{split}
\end{equation*}
with arbitrary $\ve<1$. Combining it with~\eqref{cac-inq-test}, choosing $\ve$ small enough to absorb the term, and noticing that $\eta\geq \mathds{1}_{B_R},$ we obtain~\eqref{inq:cacc}.
 
 \bigskip

\textit{(iv)} To get~\eqref{inq:Morrey-continuity}, we start with the proof that there exist $\beta_1>1,\ c>0$, such that for every 
 $B_{R}\subset A$ we have
\begin{equation}
\label{continuity-Morrey-G}\barint_{B_{\vr }}G(|Dv|)dx\leq c\left(\frac{\vr}{R}\right)^{- \beta_1 } \barint_{B_{R}}G (|Dv|)dx .
\end{equation} 
Therefore,~\eqref{inq:cacc} together with the Sobolev embedding given in Proposition~\ref{lem:Sob} (note that $G^{1/n'}$ is convex due to $i_G>n'$) give
\[\barint_{B_{\vr }}G(|Dv|)dx\leq c\, \barint_{B_{2\vr}}G\left(\frac{|v-(v)_{B_{\vr}}|}{\vr}\right)
dx\leq c\left(\barint_{B_{2\vr}}G^\frac{1}{n'}(|Dv|)dx\right)^{n'}. \]
We estimate further the right-hand side above extending the domain of integration and using H\"older's inequality
\[\left(\barint_{B_{2\vr}}G^\frac{1}{n'}(|Dv|)dx\right)^{n'}\leq c\left(\frac{\vr}{R}\right)^{-nn'}\left(\barint_{B_{2R}}G^\frac{1}{n'}(|Dv|)dx\right)^{n'}\leq c\left(\frac{\vr}{R}\right)^{-\beta_1} \barint_{B_{2R}}G (|Dv|)dx.\]  Summing up the above inequalities we arrive at~\eqref{continuity-Morrey-G}.

To get the final claim we apply Jensen's inequality, extend the domain of integration and apply further~\eqref{continuity-Morrey-G}  we have
\[\barint_{B_{\vr }}g(|Dv|)dx\leq c g \circ G^{-1}\left(\barint_{B_{\vr }}G(|Dv|)dx\right)\leq c g \circ G^{-1}\left(c\left(\frac{\vr}{R}\right)^{-\beta_1}   \barint_{B_{2R }} G(|Dv|) dx \right).\]
Then~\eqref{rev-Hold} enables to arrive at
\[ \barint_{B_{\vr }}g(|Dv|)dx \leq  c \left(\frac{\vr}{R}\right)^{-\frac{\beta_1(i_G-1)}{s_G}} g\left(c\ \barint_{B_{2R}} |Dv| dx\right)  \leq c \left(\frac{\vr}{R}\right)^{-\frac{\beta_1(i_G-1)}{s_G}}\barint_{B_{2R}} g(|Dv|) dx,\]
where we also used that $t\mapsto g\circ G^{-1}(t)$ is concave and grows faster than $t^{-(i_G-1)/s_G}$, then the Jensen inequality, and write $\beta= \beta_1 (i_G-1)/s_G>0$.
\end{proof}

\subsection{Comparison estimates and direct consequences}
We provide a comparison estimate between solution to~\eqref{eq:main} and $v\in u+W^{1,G}_0(B_R)$ solving
\begin{equation}\label{eq:comp-map}
\left\{\begin{array}{ll}
-{\dv}\, a(x,Dv)=0&\text{ in }B_{R},\\
v=u&\text{ on }\partial B_{R}.
\end{array}\right.
\end{equation} For existence and uniqueness for this problem we refer to~\cite[Lemma 5.2]{Lieb91}.

To get the comparison estimate we modify the proof of~\cite[Lemma~5.3]{Baroni-Riesz} to capture $x$-dependence of vector field $a$. We note that here we meet essential obstacle to go include the case of $p<2$. It is solved in the case of $p$-Laplacian in~\cite{ku-min-univ} (see also~\cite{KuMi}), though the Orlicz analogue is substantially more technical and is not yet proven.

\begin{prop} \label{prop:comp-B-xi}
Suppose $a:\Omega\times\rn\to\rn$ satisfies Assumption (A). If $u\in W^{1,G}(\Omega)$ is a local SOLA to~\eqref{eq:main} and $v\in u+W^{1,G}_0(B_R)$ is a weak solution to~\eqref{eq:comp-map} on $B_R$, then   there exist a constant $c=c(n,\nu,s_G)>0$, such that
\begin{equation}\label{eq:comp-est}
 \barint_{B_R} g  (|Du-D v|)\,dx \leq c\left(\frac{|\mu|(B_R)}{R^{n-1}}\right) .
\end{equation}
\end{prop}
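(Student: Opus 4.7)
The plan is to follow the Boccardo--Gallou\"et truncation scheme, adapted to the Orlicz framework of \cite[Lemma~5.3]{Baroni-Riesz}, taking care of the merely measurable $x$-dependence of $a$ that we admit here. The three main ingredients are: approximation by regular data, a test with truncations that combines with the monotonicity lower bound implied by $i_G\ge 2$, and a dyadic passage from a sublevel $G$-bound to the claimed global $g$-bound.

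First, I would invoke the SOLA structure (Definition~\ref{def:SOLA}) to obtain data $\mu_k\in L^\infty(\Omega)$ and solutions $u_k$ of $-\dv\,a(x,Du_k)=\mu_k$ with $u_k\to u$ in $W^{1,1}_{loc}(\Omega)$ and $g(|Du_k|)\to g(|Du|)$ in $L^1_{loc}(\Omega)$, and introduce the $a$-harmonic replacements $v_k\in u_k+W^{1,G}_0(B_R)$, whose existence and uniqueness follow from \cite[Lemma~5.2]{Lieb91}. Standard monotonicity-based stability yields $Dv_k\to Dv$ strongly enough that $g(|Du_k-Dv_k|)\to g(|Du-Dv|)$ in $L^1(B_R)$ along a subsequence. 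It then suffices to prove \eqref{eq:comp-est} uniformly in $k$ for the pair $(u_k,v_k)$ and to pass to the limit using Fatou's lemma together with $\limsup_k|\mu_k|(B_R)\le |\mu|(B_R)$.

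Writing $w_k=u_k-v_k\in W^{1,G}_0(B_R)$ and subtracting the two equations yields $-\dv\bigl(a(x,Du_k)-a(x,Dv_k)\bigr)=\mu_k$ in $B_R$. Testing with the truncation $T_h(w_k)$, $h>0$, gives
\[
\int_{\{|w_k|<h\}}\langle a(x,Du_k)-a(x,Dv_k),Dw_k\rangle\,dx\le h\,|\mu_k|(B_R).
\]
Integrating the ellipticity lower bound of Assumption~(A) along the segment from $Dv_k$ to $Du_k$, and exploiting $i_G\ge 2$ (so that $t\mapsto g(t)/t$ is non-decreasing and $G(t)\simeq t\,g(t)$), produces the classical monotonicity inequality $\langle a(x,z_1)-a(x,z_2),z_1-z_2\rangle\ge c\,G(|z_1-z_2|)$, from which one derives the key sublevel energy estimate
\begin{equation}\label{pw-eq}
\int_{\{|w_k|<h\}}G(|Dw_k|)\,dx\le c\,h\,|\mu_k|(B_R)\qquad\text{for every }h>0.
\end{equation}

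The principal obstacle, and the step where the Orlicz structure really matters, is the conversion of the sublevel $G$-bound \eqref{pw-eq} into the global $g$-bound claimed in \eqref{eq:comp-est}. My plan is to decompose
\[
B_R=\{|w_k|\le h_0\}\cup\bigcup_{j\ge 1}\{2^{j-1}h_0<|w_k|\le 2^j h_0\}
\]
for a level $h_0$ to be chosen. On the low set, Young's inequality for the conjugate pair $(G,\widetilde G)$ applied to $g(|Dw_k|)=G'(|Dw_k|)$ — together with the bound $\widetilde G(g(t))\lesssim G(t)$, which follows from $G,\widetilde G\in\Delta_2$ — combined with \eqref{pw-eq} controls $\int g(|Dw_k|)$ there. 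On each dyadic shell, the Orlicz Sobolev embedding of Theorem~\ref{theo:Sob-emb} applied to $T_{2^j h_0}(w_k)$ combined with \eqref{pw-eq} yields a Marcinkiewicz-type decay $|\{|w_k|>h\}|\le \Phi(h,|\mu_k|(B_R),R)$, with the correct $R$-scaling coming from the Sobolev constant. Summability of the dyadic contributions is guaranteed by $i_G>1$, and optimising $h_0$ against $|\mu_k|(B_R)$ produces the factor $|\mu_k|(B_R)/R^{n-1}$ on the right-hand side of \eqref{eq:comp-est}. Passing to the limit $k\to\infty$ as described above concludes the proof.
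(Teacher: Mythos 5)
Your skeleton (testing the difference equation with truncations, the sublevel energy bound $\int_{\{|w|<h\}}G(|Dw|)\,dx\le c\,h\,|\mu|(B_R)$, then a level-set/dyadic decomposition combined with the Orlicz--Sobolev embedding) is indeed the strategy of the paper's Step~2.1, and your monotonicity inequality $\langle a(x,z_1)-a(x,z_2),z_1-z_2\rangle\ge c\,G(|z_1-z_2|)$ is legitimate under (A) since $i_g\ge 1$ makes $t\mapsto g(t)/t$ non-decreasing. The genuine gap is in the conversion step, precisely where the Orlicz structure bites. The claimed estimate \eqref{eq:comp-est} is purely linear in $|\mu|(B_R)R^{1-n}$, with no additive remainder, and since $g$ is not homogeneous you cannot recover this dependence a posteriori by scaling or by tuning the single parameter $h_0$. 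Concretely, your low-set step ``Young's inequality plus $\wt G(g(t))\lesssim G(t)$'' gives
\[
\int_{\{|w|\le h_0\}} g(|Dw|)\,dx\ \lesssim\ h_0\,|\mu|(B_R)+G(1)\,|B_R|,
\]
and the term $G(1)|B_R|\simeq R^n$ is not bounded by $c\,|\mu|(B_R)\,R$ when $|\mu|(B_R)\ll R^{n-1}$; similar uncontrolled additive contributions arise from the tail of the dyadic sum. The paper resolves exactly this by first rescaling to $B_1$ with the data-adapted normalization $A=g^{-1}\bigl(|\mu|(B_R)R^{1-n}\bigr)$, so that $|\bmu|(B_1)=1$, the rescaled growth is $\bg(t)=g(At)R^{n-1}/|\mu|(B_R)$, and the target becomes an absolute constant --- only then are the additive constants produced by Young/Jensen/Sobolev and by the summation over levels harmless, and undoing the rescaling yields the factor $|\mu|(B_R)R^{1-n}$. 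Your plan to ``optimise $h_0$ against $|\mu_k|(B_R)$'' does not substitute for this: every application of Young or Jensen must be performed at the scale $A$ (equivalently, after the rescaling), not just the choice of the truncation level. Without that, the argument as written proves a weaker inequality with additive error terms.

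Two secondary points. First, your decay-of-level-sets step implicitly uses the slow-growth form of Theorem~\ref{theo:Sob-emb}; under (A) the fast-growth case $\int^\infty (t/G(t))^{1/(n-1)}dt<\infty$ is not excluded, and there the embedding gives an $L^\infty$ bound instead, so this case needs a separate (in fact much shorter) argument, as in the paper's Step~2.2, where one tests directly with $\bu-\bv$. Second, the approximation layer ($u_k$, $v_k$, and the asserted strong convergence $g(|Du_k-Dv_k|)\to g(|Du-Dv|)$) is not needed and is itself a nontrivial claim: since the test functions are bounded truncations, one can test the difference equation directly in the weak formulation with measure data, which is what the paper does.
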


\begin{proof} In order to get~\eqref{eq:comp-est}, we rescale the equation to the unit ball, provide estimate in the cases of slowly and fastly growing $G$ separately, and then rescaling back we arrive at the claim.

\medskip

\noindent\textsc{Step 1. Rescaling. }

 Let us note that if $\mu(B_R)=0,$ then monotonicity of the vector field $a$ implies that $u=v$ and~\eqref{eq:comp-est} trivially follows. Otherwise, i.e. when $\mu(B_R)\neq 0,$ we fix $A=g^{-1}(|\mu|(B_R)R^{1-n})$ and rescale 
 \begin{equation}
\label{resc-comp} 
 \begin{array}{llll}
\bu(x)=&\frac{u(x_0+Rx)}{AR},&\qquad 
\bv(x)=&\frac{v(x_0+Rx)}{AR},\\\\
\ba(x)=&\frac{a(x,Az)}{|\mu|(B_R)}R^{n-1},&\qquad 
\bv(x)=&\frac{\mu(x_0+Rx)}{|\mu|(B_R)}R^n.
 \end{array} 
 \end{equation}
Then $|\bmu|(B_1)=1$ and the growth of $\ba$ is governed by $\bar{g}(t)=g(At)R^{n-1}/|\mu|(B_R)$. Indeed, due to Assumption (A), we have
\[
\langle\partial_z \ba(x,z)\lambda,\lambda\rangle=\frac{A R^{n-1}}{|\mu|(B_R)}\langle\partial_{Az}  a(x,Az)\lambda,\lambda\rangle\geq \nu\frac{AR^{n-1}}{|\mu|(B_R)}\frac{g(A|z|)}{A|z|}|\lambda|^2.\] 

Then~\eqref{eq:main} and~\eqref{eq:comp-map} implies
\begin{equation}\label{eq:diff-comp-est}
 -\dv\big( \ba(x,D\bu)-\ba(x,D \bv)\big)=\bmu\qquad\text{in }\ B_1 
\end{equation}
admitting the weak formulation

\begin{equation}\label{eq:weak-diff-comp-est}
 \int_{B_1}\langle \ba(x,D\bu)-\ba(x,D \bv),D\vp\rangle\,dx=\int_{B_1}\vp\, d\bmu.
\end{equation}

Our aim now is to provide estimate
\begin{equation}\label{eq:resc-comp-est}
 \int_{B_1} \bar{g}  (|D\bu-D \bv|)\,dx \leq c(n,\nu,s_G).
\end{equation}

\medskip

\noindent\textsc{Step 2. Measure data estimates. } As typically in the Orlicz setting, we distinguish two types of growth of $G$ for which the Sobolev embedding (Theorem~\ref{theo:Sob-emb}) has different form. See comments in Section~\ref{sec:Or}.

\smallskip

\textsc{Step 2.1. Slowly growing $G$. } 

\noindent We consider \[\gb(t)=\int_0^t\frac{\bg(s)}{s}ds.\]
Notice that $(i_G-1)\gb(t)\leq \bg(t)\leq (s_G-1)\gb(t)$ for $t>1$. In order to use Sobolev's embedding we define 
\begin{equation}
\label{g0vt}{g}^0(t)=t\gb(1)\mathds{1}_{[0,1]}(t)+\gb(t)\mathds{1}_{(1,\infty)}(t)\quad\text{and}\quad  \vt=c_s\left(\int_{B_1} g^0(|D\bu-D \bv|)\,dx\right)^\frac{1}{n}.\end{equation}
We note that withous loss of generality $\vt\geq 1$. What is more, Young's inequality implies that
\begin{equation}
\label{vt-est}\vt\leq  {\epsilon_1}  \int_{B_1} \bg (|D\bu-D \bv|)\,dx+c( {\epsilon}_1 )\end{equation}
with arbitrary $\epsilon_1>0$ to be chosen later.

For any $k>0$ and $\sigma\in\r$ let us denote
\[T_k(\sigma)=\max\{-k,\min\{k,\sigma\}\}\quad\text{and}\quad \Phi_k(\sigma)=T_1(\sigma-T_k(\sigma)).\]
At first we use
\[\vp=T_k\left(\frac{\bu-\bv}{\vt}\right)\in W^{1,\bG}_0(B_1)\cap L^\infty(B_1)\]
as a test function in~\eqref{eq:weak-diff-comp-est}. Note that
\[D\vp=\frac{D(u-v)}{\vt}\mathds{1}_{C_k},\qquad\text{where}\qquad C_k=\left\{x\in B_1:\ \frac{|u(x)-v(x)|}{\vt}\leq k\right\}.\]
Notice that since $\bg$ satisfies~\eqref{ig-sg}, due to~\cite{DieEtt}, we have
\[\bG(|\xi_1-\xi_2|)\lesssim \frac{\bg(|\xi_1-\xi_2|)}{|\xi_1-\xi_2|}|\xi_1-\xi_2|^2\lesssim \frac{\bg(|\xi_1|+|\xi_2|)}{|\xi_1|+|\xi_2|}|\xi_1-\xi_2|^2.\]
Consequently,
\[\frac{c}{\vt}\int_{C_k}\bG(|D\bu-D\bv|)dx\leq \frac{1}{\vt}\int_{C_k}\langle \ba(x,D\bu)-\ba(x,D \bv),D\bu-D\bv\rangle\,dx=\int_{B_1}\langle \ba(x,D\bu)-\ba(x,D \bv),D\vp\rangle\,dx.\]

On the other hand, when we recall that $|\mu |(B_1)=1$, we observe
\[\left|\int_{B_1}T_k\left(\frac{u-v}{\vt}\right)d\mu\right|
\leq \int_{B_1} k\, d|\mu | =k |\mu |(B_1)=k.\]
 
Therefore,
\begin{equation}
\label{G-Ck} \int_{C_k}\bG(|D\bu-D\bv|)dx\leq \frac{\vt}{c}\int_{B_1}\langle \ba(x,D\bu)-\ba(x,D \bv),D\vp\rangle\,dx=\frac{\vt}{c}\int_{B_1}\vp\, d\bmu\leq Ck\vt.\end{equation}

Using the same arguments with $\vp=\Phi_k((u-v)/\vt)\in W^{1,G}_0(B_1)\cap L^\infty(B_1)$ as a test function in~\eqref{eq:weak-diff-comp-est} we get
\begin{equation}
\label{G-Ck+1-Ck} \int_{C_{k+1}\setminus C_k}\bG(|D\bu-D\bv|)dx\leq \frac{\vt}{c}\int_{B_1}\langle \ba(x,D\bu)-\ba(x,D \bv),D\vp\rangle\,dx=\frac{\vt}{c}\int_{B_1}\vp\, d\bmu\leq C\vt.\end{equation}

Let us note that $t\mapsto \gb\circ \bG^{-1}(t)$ is increasing and concave. Indeed, direct computations show that\[\begin{split}\frac{d}{dt}\gb(\bG^{-1}(t))=\frac{1}{\bG^{-1}(t)}>0\qquad\text{and}\qquad
\frac{d^2}{dt^2}\gb(\bG^{-1}(t)) =-\frac{1}{\bg(\bG^{-1}(t))(\bG^{-1}(t))^2}<0.\end{split}\] 
Let us note that for the function $H(s):=1/\bG(1/s)$ it holds that \[t\mapsto H^{-1}(t)=\frac{1}{\bG^{-1}(1/t)}\] is increasing, concave, and satisfies $\Delta_2$-condition. Therefore, due to Lemma~\ref{lem:D2} for every $t,s>0$
\[\begin{split}  H^{-1}(t+s)\lesssim H^{-1}(t)+H^{-1}(s).
\end{split}\]
and
for $s>1$
\begin{equation}
\label{tildH-l-H}\wt{H}^{-1}(s)\leq 2 H(s).\end{equation}

Since $g$ satisfies~\eqref{ig-sg}, we notice that
\begin{equation}
\label{gbG-tildH}\gb\circ \bG^{-1}\left(\frac{1}{t}\right)\simeq \frac{1}{t}H^{-1}(t).\end{equation}

Concavity of $\gb\circ \bG^{-1}$ enables to apply Jensen's inequality in~\eqref{G-Ck} and~\eqref{G-Ck+1-Ck} and then use the above observation leading to
\begin{equation}
\label{gbH}\begin{split} \int_{C_k}\gb(|D\bu-D\bv|)dx&\leq |C_k|\gb\circ \bG^{-1}\left(\barint_{C_k}\bG(|D\bu-D\bv|)dx\right)\\
&\lesssim \gb\circ \bG^{-1}\left(\frac{k\vt}{|C_k|}\right)\lesssim { k\vt}H^{-1}\left(\frac{|C_k|}{k\vt}\right)\end{split}\end{equation}
and
\[\begin{split}
\int_{C_{k+1}\setminus C_k}\gb(|D\bu-D\bv|)dx&\leq |C_{k+1}\setminus C_k|\cdot\gb\circ \bG^{-1}\left(\barint_{C_{k+1}\setminus C_k}\bG(|D\bu-D\bv|)dx\right)\\
&\lesssim \gb\circ \bG^{-1}\left(\frac{\vt}{|C_{k+1}\setminus C_k|}\right)\lesssim {\vt}H^{-1}\left(\frac{|C_{k+1}\setminus C_k|}{\vt}\right).\end{split}\]
Altogether we have
\begin{equation}
\label{est-I}\begin{split} \int_{B_1}\gb(|D\bu-D\bv|)dx&\leq \int_{C_1}\gb(|D\bu-D\bv|)dx+\sum_{k=1}^{\infty}\int_{C_{k+1}\setminus C_k}\gb(|D\bu-D\bv|)dx\\
&\lesssim {\vt} H^{-1}\left(\frac{|B_1|}{\vt}\right)+{\vt}\sum_{k=1}^{\infty}H^{-1}\left(\frac{|C_{k+1}\setminus C_k|}{\vt}\right) =:I_1+I_2.\end{split}\end{equation}

Let us concentrate on $I_1$. Due to~\eqref{gbG-tildH}, $\bg(t)\simeq \bG(t)/t$, and since we suppose $\vt\geq 1$, we get
\[I_1={\vt} H^{-1}\left(\frac{|B_1|}{\vt}\right)\lesssim 
\vt\leq  {\epsilon}_1  \int_{B_1} \bg (|D\bu-D \bv|)\,dx+c( {\epsilon}_1 ).\]

In order to estimate $I_2$ we shall apply Sobolev's embedding (Theorem~\ref{theo:Sob-emb}) with $B=\gb$. For this we define \[\gbn(t)=g^0\circ H_n^{-1}(t),\]
where $H_n$ is given by \eqref{BN} with $B=\gb$. Notice that when $t>1$, then
\begin{equation}
\label{H-est}\begin{split}H_n^{-1}(t)&\geq \left[\int_0^1 \left(\frac{s}{\gb(s)}\right)^\frac{1}{n-1}ds\right]^\frac{n-1}{n}=\left[\frac{1}{\gb(1)}\right]^\frac{1}{n}\gtrsim 1,\\
\gbn(k)&=g^0\circ H_n^{-1}(k)=\gb\circ H_n^{-1}(k).\end{split}\end{equation}
The convexity of $\gbn$ implies that
\[|C_{k+1}\setminus C_k|\leq \frac{1}{\gbn(k)}\int_{C_{k+1}\setminus C_k} \gbn\left(\frac{|u-v|}{\vt}\right)dx.\]
Therefore we can estimate
\[\begin{split}I_2=\sum_{k=1}^{\infty}H^{-1}\left(\frac{|C_{k+1}\setminus C_k|}{\vt}\right)&\leq \sum_{k=1}^{\infty}H^{-1}\left(\frac{1}{\gbn(k)\vt}\int_{C_{k+1}\setminus C_k} \gbn\left(\frac{|u-v|}{\vt}\right)dx\right)\\&\leq \frac{\epsilon_2}{\vt}\int_{B_1} \gbn\left(\frac{|u-v|}{\vt}\right)dx+c(s_G,\epsilon_2)\sum_{k=1}^{\infty}H^{-1}\left(\wt{H}\left(\frac{1}{\gbn(k)} \right)\right)\\
&\lesssim \frac{\epsilon_2}{\vt} \int_{B_1} \gb\left( |Du-Dv| \right)dx+c(s_G,\epsilon_2)\sum_{k=1}^{\infty} \frac{1}{\gbn(k)}\\
&\lesssim \frac{\epsilon_2}{\vt} \int_{B_1} \bg\left( |Du-Dv| \right)dx+c(n,s_G,\epsilon_2),\end{split}\]
where we used Young inequality (with arbitrary $\epsilon_2>0$ to be chosen), Sobolev inequality (Theorem~\ref{theo:Sob-emb}), \eqref{tildH-l-H}, $\bg\sim\gb$,~\eqref{H-est}, and, finally, we notice further that the last term is summable due to comparison $\bg$ with power-type functions cf.~\eqref{B-power-compar}. Consequently, we estimate in~\eqref{est-I}
\[\begin{split}&\int_{B_1}\gb(|D\bu-D\bv|)dx \leq I_1+I_2\lesssim (\epsilon_1+\epsilon_2) \int_{B_1}\gb(|D\bu-D\bv|)dx+c(\epsilon_1)+c(n,s_G,\epsilon)\vt\\
&\qquad\qquad\qquad\leq (\epsilon_1+\epsilon_2) \int_{B_1}\gb(|D\bu-D\bv|)dx+c(\epsilon_1)+c(n,s_G,\epsilon)\left({\epsilon_1}  \int_{B_1} \bg (|D\bu-D \bv|)\,dx+c( {\epsilon}_1 )\right)\\
&\qquad\qquad\qquad\leq  \widetilde{\epsilon} \int_{B_1}\gb(|D\bu-D\bv|)dx+\widetilde{c}.\end{split}\]
Choosing $\epsilon_1>0$ and then $\epsilon_2>0$ both sufficiently small, we can absorb the first term on the right-hand side. This fixes the remaining constant and implies~\eqref{eq:resc-comp-est}. 

\bigskip

\textsc{Step 2.2. Fastly growing $G$ }  

\noindent Since $u,v\in W^{1,\bG}(B_1)$, their difference is bounded, thus we can choose
\[\vp=\bu-\bv\in W_0^{1,\bG}(B_1)\cap L^\infty(B_1)\]
as a test function in~\eqref{eq:weak-diff-comp-est}. Notice that Assumption (A) and then  Sobolev's embedding imply
\[\begin{split}\int_{B_1}\bG(|D\bu-D\bv|)\,dx&\lesssim \int_{B_1}\langle \ba(x,D\bu)-\ba(x,D \bv),D\bu-D\bv\rangle\,dx=\int_{B_1}\bu-\bv\, d\bmu\\
& \leq \sup_{B_1}|\bu-\bv| |\bmu|(B_1)\leq c(n,s_G)\|D\bu-D\bv\|_{L_\bG(B_1)},\end{split}
\]
which we estimate further using Lemma~\ref{lem:norm} arriving at
\[\begin{split}\int_{B_1}\bG(|D\bu-D\bv|)\,dx \leq \epsilon \int_{B_1}\bG(|D\bu-D\bv|)\,dx+ \epsilon^{-1}\end{split}
\]
with arbitrary $\epsilon>0$. Let us choose it small enough to make the first term on the right-hand side be absorbed by the left-hand side. Repeating the arguments of~\eqref{gbH} we observe that
\[\begin{split} \int_{B_1} \bg(|D\bu-D\bv|)\,dx\lesssim (\gb\circ \bG^{-1})\left(\barint_{B_1} \bG(|D\bu-D\bv|)dx\right)\leq c
\end{split}\]
and~\eqref{eq:resc-comp-est} is proven. 

\medskip

\noindent\textsc{Step 3. Rescaling back. } We reverse the change of variables from~\eqref{resc-comp}. We have
\[\frac{1}{g(A)}\barint_{B_R}g(|Du-Dv|)dx=\barint_{B_1}\bg(|D\bu-D\bv|)dx\leq c,\]
what completes the proof.\end{proof}

\begin{coro}\label{coro:comparison}
Suppose $u\in W^{1,G}(\Omega)$ is a local SOLA to~\eqref{eq:main}, $v\in u+W^{1,G}_0(B_R)$ is a weak solution to~\eqref{eq:comp-map} on $B_R$, and parameters satisfy $q\in(0,\infty)$, $\theta\in[0,n]$, $\gamma\in(1,\infty)$.

If $\mu(dx)=f(x)dx$ with $f\in L^1(B_R)$, then there exist a constant $c>0$, such that
\begin{equation}
\label{eq:com-f-1st}
 \int_{B_R} g  (|Du-D v|)\,dx \leq c R \int_{B_R} |f|\,dx.
\end{equation}
Moreover, for $\mu\in L^{q,\theta}(B_R)$ there exist a constant $c>0$, such that \begin{equation}
\label{eq:cominMor}\int_{B_R} g  (|Du-D v|)\,dx \leq c R^{\frac{q-\theta}{q}}\|\mu\|_{L^{q,\theta}(B_R)},
\end{equation}
whereas for $\mu\in{L^{\theta}(\gamma,q)(B_R)}$ there exist a constant $c>0$, such that \begin{equation}
\label{eq:cominMorLor}\int_{B_R} g  (|Du-D v|)\,dx \leq c R^{n-\frac{\theta-\gamma}{\gamma}}\|\mu\|_{L^{\theta}(\gamma,q)(B_R)},
\end{equation}
\end{coro}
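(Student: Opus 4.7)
The plan is to derive all three inequalities directly from Proposition~\ref{prop:comp-B-xi} by controlling the total mass $|\mu|(B_R)$ in terms of the specific norm of the data on $B_R$. The base estimate from Proposition~\ref{prop:comp-B-xi} multiplied by $|B_R|\simeq R^n$ reads
\[\int_{B_R} g(|Du-Dv|)\,dx \leq c\,R\,|\mu|(B_R),\]
and this is the common starting point for all three cases.

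For~\eqref{eq:com-f-1st}, the argument is trivial: if $\mu=f\,dx$ then $|\mu|(B_R)=\int_{B_R}|f|\,dx$ and the claim is immediate. For~\eqref{eq:cominMor}, I would apply H\"older's inequality with the pair $(q,q')$ to write
\[|\mu|(B_R)=\int_{B_R}|f|\,dx \leq |B_R|^{1/q'}\|f\|_{L^q(B_R)},\]
and then invoke the defining density condition $\|f\|_{L^q(B_R)}\leq cR^{(n-\theta)/q}\|\mu\|_{L^{q,\theta}(B_R)}$ from Definition~\ref{def:Mor:sp}. Combining with the base estimate and collecting powers of $R$ yields the claimed exponent.

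For~\eqref{eq:cominMorLor}, I would run the same template but in the Lorentz scale. Since $\gamma>1$, the H\"older-type inequality for Lorentz spaces gives
\[|\mu|(B_R)\leq c\,|B_R|^{1-1/\gamma}\|\mu\|_{L(\gamma,q)(B_R)},\]
and then the Lorentz-Morrey scaling from Definition~\ref{def:LorMor:sp} produces $\|\mu\|_{L(\gamma,q)(B_R)}\leq cR^{(n-\theta)/\gamma}\|\mu\|_{L^\theta(\gamma,q)(B_R)}$. Multiplying by the initial factor of $R$ and collecting exponents produces $R^{1+n/\gamma'+(n-\theta)/\gamma}=R^{n+(\gamma-\theta)/\gamma}=R^{n-(\theta-\gamma)/\gamma}$, as required.

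The only nontrivial point is the H\"older inequality in the Lorentz scale used for~\eqref{eq:cominMorLor}; this is classical but requires $\gamma>1$, which is in the hypothesis. Beyond that, each step is a scaling chase, so the substance of the corollary sits entirely in Proposition~\ref{prop:comp-B-xi}; no further PDE arguments are needed.
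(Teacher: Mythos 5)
Your route is the same as the paper's: all three estimates are deduced from Proposition~\ref{prop:comp-B-xi} by bounding $|\mu|(B_R)$, the Morrey case via H\"older and the Lorentz--Morrey case via the nesting $L(\gamma,q)\subset{\cal M}^\gamma$ together with Lemma~\ref{lem:LqinMarc} (with exponents $1$ and $\gamma$, which is exactly your ``Lorentz H\"older'' step) and the scaling factor $R^{(\theta-n)/\gamma}$ from Definition~\ref{def:LorMor:sp}; your exponent count for \eqref{eq:cominMorLor} agrees with the paper's. One point to fix in your write-up of \eqref{eq:cominMor}: collecting the powers of $R$ there gives $1+\frac{n}{q'}+\frac{n-\theta}{q}=n+\frac{q-\theta}{q}$, not the exponent $\frac{q-\theta}{q}$ stated in the corollary, so your claim that the arithmetic ``yields the claimed exponent'' is not literally true. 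The exponent $n-\frac{\theta-q}{q}$ that the computation actually produces is the one used later in the proof of Proposition~\ref{prop:grad-u-est} (where the comparison term is multiplied by $R^{\frac{\theta-q}{q}-n}$), so the displayed exponent in \eqref{eq:cominMor} is best read as a misprint; you should either display the computation and record the exponent $n+\frac{q-\theta}{q}$, or note that for $R$ bounded this stronger bound implies the stated one. No further PDE input is needed, as you say.
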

\begin{proof}Recall that $L^{q,\theta}$ and $L^{\theta}(\gamma,q)$ are defined in Section~\ref{ssec:fn-sp}. Inequality~\eqref{eq:com-f-1st} comes directly from Proposition~\ref{prop:comp-B-xi}  and the norm definition. For~\eqref{eq:cominMor} we apply the H\"older inequality, as for~\eqref{eq:cominMorLor} we apply Proposition~\ref{prop:comp-B-xi} and Lemma~\ref{lem:LqinMarc} with $q=1$ and $t=\gamma$ getting
\[\begin{split}\int_{B_R} g  (|Du-D v|)\,dx &\leq c R^{1+n-\frac{n}{\gamma}}\left(\frac{\gamma}{\gamma-1}\right)\|\mu\|_{{\cal M}^{\gamma}(B_R)}\leq c R^{1+n-\frac{n}{\gamma}}\left(\frac{\gamma}{\gamma-1}\right)\|\mu\|_{L({\gamma},q)(B_R)}\\
&= c R^{n-\frac{\theta-\gamma}{\gamma}}R^{\frac{\theta-n}{\gamma}}\left(\frac{\gamma}{\gamma-1}\right)\|\mu\|_{L({\gamma},q)(B_R)}\leq c R^{n-\frac{\theta-\gamma}{\gamma}}\|\mu\|_{L^{\theta}(\gamma,q)(B_R)}.\end{split}\]
\end{proof}

\section{Preliminary estimates} \label{sec:prel-est}

This section provides preliminary estimates in the scale of the Lorentz the Morrey spaces.

\subsection{Preliminary Lorentz estimates}

We derive estimates on the maximal operator of gradient $Du$ of solutions $u=u_k$ to the problem with bounded data~\eqref{eq:main-k-for-SOLA}.

\subsubsection{Super-level sets estimates}

The important tool we apply is the following version of denisty lemma resulting from~\cite[Lemma~1.2]{CaPe}.
\begin{lem}[Krylov-Safonov density lemma]
\label{lem:covering}
Let $E,F\subset B_0\subset\rn$ be measurable sets with $B_0$ being a ball. Define\begin{equation*}
E^\kappa:=E\cap\kappa B_0\qquad\text{and} \qquad F^\kappa:=F\cap\kappa B_0
\end{equation*}
for every $\kappa\in(0,1]$. Assume that for some ${\delta}\in(0,1)$, $d\geq 1$, and $0<r_1<r_2\leq 1$ the following conditions are satisfied\begin{itemize}
\item $|E^{r_1}|\leq\frac{ {\delta}}{5^n}\left(\frac{r_2-r_1}{d}\right)^n|B_0|$
\item if $B$ is a ball such that $(d B)\subset B_0$, then \[|E\cap B|>\frac{ {\delta}}{5^n}|B|\implies B\subset F.\]
\end{itemize}
Then\begin{equation*}
|E^{r_1}|\leq {\delta}|F^{r_2}|.
\end{equation*}

\end{lem}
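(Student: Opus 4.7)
The plan is to prove this via a Vitali-type covering combined with a stopping-time selection of balls, in the classical Krylov--Safonov style. The goal is to produce a disjoint family $\{B_i\}$ inside $F^{r_2}$ whose $5$-enlargements cover $E^{r_1}$ and on which the density of $E^{r_1}$ is exactly $\delta/5^n$; then subadditivity plus disjointness yield the desired factor $\delta$ on the right-hand side.

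For each Lebesgue density point $x \in E^{r_1}$, I define the stopping radius
\[
\rho(x) := \inf\left\{\rho > 0 : \frac{|E^{r_1} \cap B_\rho(x)|}{|B_\rho(x)|} \leq \frac{\delta}{5^n}\right\}.
\]
The Lebesgue differentiation theorem gives density tending to $1$ as $\rho \to 0$, while hypothesis (i) forces the density to drop to $\delta/5^n$ no later than $\rho = (r_2-r_1)R_0/d$ (with $R_0$ denoting the radius of $B_0$), because at this value $|B_\rho(x)| = ((r_2-r_1)/d)^n |B_0|$ and hence $|E^{r_1}|/|B_\rho(x)| \leq \delta/5^n$. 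Thus $\rho(x)$ is positive, finite, and bounded by $(r_2-r_1)R_0/d$; continuity of $\rho \mapsto |E^{r_1}\cap B_\rho(x)|/|B_\rho(x)|$ yields equality at $\rho = \rho(x)$ and strict inequality $>\delta/5^n$ for every $\rho<\rho(x)$.

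Next, I apply the Vitali $5r$-covering lemma to the family $\{B_{\rho(x)/5}(x)\}_x$ to extract a countable disjoint subfamily $B_i := B_{\rho(x_i)/5}(x_i)$ whose enlargements $5B_i = B_{\rho(x_i)}(x_i)$ still cover $E^{r_1}$ up to a null set. Since $x_i \in r_1 B_0$ and $\rho(x_i) \leq (r_2-r_1)R_0/d$, triangle-inequality checks give $dB_i \subset B_0$ and $5B_i \subset r_2 B_0$. Because $\rho(x_i)/5 < \rho(x_i)$, the strict density inequality $|E^{r_1}\cap B_i|/|B_i| > \delta/5^n$ upgrades via $E^{r_1} \subset E$ to $|E \cap B_i|/|B_i| > \delta/5^n$; hypothesis (ii) then forces $B_i \subset F$, and combined with $B_i \subset r_2 B_0$ this gives $B_i \subset F^{r_2}$.

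Finally, the defining equality at the stopping scale reads $|E^{r_1}\cap 5B_i| = (\delta/5^n)|5B_i| = \delta |B_i|$. Using that $E^{r_1}$ is covered up to null set by $\bigcup_i 5B_i$ together with disjointness of $\{B_i\} \subset F^{r_2}$, summation yields
\[
|E^{r_1}| \leq \sum_i |E^{r_1}\cap 5B_i| = \delta \sum_i |B_i| \leq \delta |F^{r_2}|.
\]
The most delicate point is the very first step: stopping with respect to the density of $E^{r_1}$ (rather than of $E$) is what lets hypothesis (i) control the critical radius $\rho(x)$, while the inclusion $E^{r_1}\subset E$ ensures hypothesis (ii), which is phrased in terms of $E$, still triggers on the selected balls; matching these two quantitatively is the whole content of the argument.
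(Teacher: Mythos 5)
Your argument is correct: the stopping-radius construction with respect to the density of $E^{r_1}$, the Vitali $5r$-covering selection, the inclusions $dB_i\subset B_0$ and $B_i\subset r_2B_0$, and the final summation over disjoint $B_i\subset F^{r_2}$ all check out, and the quantitative matching of hypothesis (i) (bounding $\rho(x)$ by $(r_2-r_1)R_0/d$) with hypothesis (ii) (triggered via $E^{r_1}\subset E$) is handled properly. The paper itself does not prove this lemma but quotes it from Caffarelli--Peral (Lemma 1.2 there), and your proof is essentially the standard Krylov--Safonov/Calder\'on--Zygmund argument underlying that citation, so there is nothing to correct.
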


Let us prepare to apply the density lemma in consideration on super-level sets of the maximal operator evaluated in gradient of the solution. We employ the notation given by~\eqref{MDu-Mmu-def}. 
\begin{lem}\label{lem:Binc}
Suppose $u\in W^{1,G}(\Omega)$ is a weak solution to~\eqref{eq:main-k-for-SOLA}. Let   $H=H(n,G)>0$ be large fixed absolute constant. Assume further that there exist $T_0$, such that for every $T>T_0$ there exists $\ve=\ve(n,G,T)>0$, such that for every $\lambda>0$ and $B $ such that $2B\subset  B_0$ it holds that
\begin{equation}\label{if}
\Big|B\cap\{x\in B_0: \MDu(x)>HT\lambda\quad \text{ and }\quad\Mmu(x)\leq g(\ve\lambda)\}\Big|>\frac{|B|}{5^n G^{\chi}(HT)}.
\end{equation}
Then
\begin{equation}\label{then}
B\subset \{x\in B_0: \MDu(x)>HT\lambda\}.
\end{equation}
\end{lem}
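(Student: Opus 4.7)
\textbf{Proof plan for Lemma~\ref{lem:Binc}.} I would argue by contraposition. Assume \eqref{then} fails, so some point $x^\star\in B$ satisfies $\MDu(x^\star)\le HT\lambda$; I may also assume the super-level set on the left-hand side of \eqref{if} is nonempty, so there exists $x_\flat\in B$ with $\Mmu(x_\flat)\le g(\varepsilon\lambda)$ (otherwise \eqref{if} is trivially violated). The inclusion $2B\subset B_0$ gives $4B\subset 2B_0$, so testing these two control points against the ball $4B$ yields the scale-invariant estimates
\[
\barint_{4B}|Du|\,dx\ \lesssim\ HT\lambda,\qquad \frac{|\mu|(4B)}{(4R)^{n-1}}\ \lesssim\ g(\varepsilon\lambda).
\]

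Next, I would introduce the comparison map: let $v\in u+W_0^{1,G}(4B)$ solve the homogeneous problem $-\dv\,a(x,Dv)=0$ in $4B$. Proposition~\ref{prop:comp-B-xi} applied to $4B$, combined with the second estimate above, gives $\barint_{4B}g(|Du-Dv|)\,dx\lesssim g(\varepsilon\lambda)$; Jensen's inequality with the concave $g^{-1}$ converts this to $\barint_{4B}|Du-Dv|\,dx\lesssim\varepsilon\lambda$ and hence $\barint_{4B}|Dv|\,dx\lesssim HT\lambda$. Then the higher integrability \eqref{higher-int} of Proposition~\ref{prop:homo-problem} produces $\barint_{2B}G^{\chi}(|Dv|)\,dx\lesssim G^{\chi}(HT\lambda)$, which together with \eqref{inq:Morrey-continuity} and the Lebesgue-point argument yields a pointwise $L^{\infty}$-bound $|Dv|(x)\lesssim HT\lambda$ for a.e.\ $x\in B$.

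The technical core is the decomposition of the restricted maximal function. For $x\in B$, any ball $B_\rho(y)\subset 2B_0$ through $x$ with $B_\rho(y)\not\subset 4B$ must satisfy $\rho>R$; such a ball can be enlarged to a ball containing $x^\star$ whose average of $|Du|$ is controlled, up to a dimensional factor, by $\MDu(x^\star)\le HT\lambda$. Choosing the absolute constant $H$ large enough to absorb this dimensional factor, I reduce to the local maximal operator:
\[
\{x\in B:\MDu(x)>HT\lambda\}\ \subset\ \{x\in B:M_{4B}(|Du|\chi_{4B})(x)>c_*HT\lambda\}.
\]
Splitting $|Du|\le|Dv|+|Du-Dv|$ inside $M_{4B}$, the pointwise bound on $|Dv|$ makes the $Dv$-contribution empty once $H$ is chosen large. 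For the comparison contribution, the convexity of $g$ and the weak $(1,1)$ estimate for $M_{4B}$ give
\[
\bigl|\{M_{4B}(|Du-Dv|)>c_*HT\lambda/2\}\bigr|\ \lesssim\ \frac{1}{g(HT\lambda)}\int_{4B}g(|Du-Dv|)\,dx\ \lesssim\ |B|\,\frac{g(\varepsilon\lambda)}{g(HT\lambda)}.
\]
Using the power-type comparison \eqref{B-power-compar} for $g$ and $G^{\chi}$, I then choose $\varepsilon=\varepsilon(n,G,T)$ small enough so that the right-hand side is $<|B|/(5^{n}G^{\chi}(HT))$, contradicting \eqref{if}.

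The main obstacle I expect is the passage from the globally restricted maximal operator $\MDu$ to the locally restricted one $M_{4B}$: making the "far-scale" averages absorbable into $HT\lambda$ requires a careful geometric enlargement argument that respects the constraint $B_R\subset 2B_0$, and this is where the precise size of the absolute constant $H$ is fixed. The rest of the argument is a standard Calderón–Zygmund-type decomposition, but transplanted to the Orlicz setting via Jensen's inequality for $g$ and the higher integrability afforded by Proposition~\ref{prop:homo-problem}.
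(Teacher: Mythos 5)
Your overall skeleton matches the paper's argument (argue from a point of $B$ where the conclusion fails, extract a point where $\Mmu\leq g(\ve\lambda)$ from \eqref{if}, compare with the solution $v$ of the homogeneous problem on $4B$, use Proposition~\ref{prop:comp-B-xi} plus Jensen to get $\barint_{4B}|Du-Dv|\,dx\lesssim\ve\lambda$, split the maximal function, choose $\ve$, and handle the far scales of $\MDu$ by a geometric enlargement argument). But there is a genuine gap at the core: you claim that \eqref{higher-int} together with \eqref{inq:Morrey-continuity} and a Lebesgue-point argument gives a pointwise bound $|Dv|\lesssim HT\lambda$ a.e.\ in $B$. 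No such $L^\infty$ gradient bound is available here: the dependence $x\mapsto a(x,\cdot)$ is merely measurable, so solutions of the homogeneous problem do not have bounded gradients in general, and \eqref{inq:Morrey-continuity} is useless for this purpose since its right-hand side carries the factor $(\vr/R)^{-\beta}$ with $\beta>0$, which blows up as $\vr\to0$. This is precisely why the paper does \emph{not} make the $Dv$-contribution empty: it keeps $\barint_{2B}G^{\chi}(|Dv|/\lambda)\,dx\leq c$ from \eqref{higher-int} and applies a weak-type estimate to $G^{\chi}(|Dv|/\lambda)$, which produces the decay factor $1/G^{\chi}(HT)$ — the very factor appearing in the density threshold of \eqref{if} and in \eqref{eq:super-level-est}. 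Your plan, by contrast, would make the $Dv$-term vanish and derive the smallness solely from the $Du-Dv$ term via $g(\ve\lambda)/g(HT\lambda)$; that rests entirely on the false $L^\infty$ bound, so the step would fail.

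A secondary problem is the level at which you negate the conclusion. You take $\MDu(x^\star)\leq HT\lambda$, so all your far-scale averages and the (claimed) bound on $|Dv|$ come out as constants times $HT\lambda$; then ``choosing $H$ large to absorb the dimensional factor'' is circular, because the threshold $c_*HT\lambda$ scales with $H$ exactly as the bound does. The paper's proof instead works with a point $\wt x\in B$ where $\MDu(\wt x)\leq\lambda$ (this is the form actually needed when the lemma is fed into the Krylov--Safonov density lemma in Proposition~\ref{prop:super-level-est}, where $F=\{\MDu>\lambda\}$), so the far-scale contribution is at most $12^n\lambda$, which is beaten by $HT\lambda$ once $H>20^n$ and $T>1$; the comparison term is then of size $\ve\lambda/(HT\lambda)$ and the choice $\ve\simeq HT/G^{\chi}(HT)$ closes the argument. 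So besides repairing the $L^\infty$ claim, you should run the contradiction at level $\lambda$ rather than $HT\lambda$; otherwise the absorption of the geometric constants cannot be carried out.
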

\begin{proof}
The result is proven by contradiction. We suppose that~\eqref{if} holds, but~\eqref{then} does not. Thus we can consider $\wt{x}\in B$, such that \begin{equation}
\label{M<lambda}
\MDu(\wt{x})\leq \lambda.
\end{equation} Since $4B\subset 2B_0$, also
\begin{equation}
\label{Du<lambda}
\barint_{4B}|Du |dx\leq \lambda.
\end{equation}
Due to~\eqref{if}, for a fixed $\ve>0$ there exists $\bar{x}\in B$, such that $ \Mmu(\bar{x})\leq g(\ve \lambda),$
and consequently \begin{equation}
\label{mu4B}
 {|\mu|(\overline{4B})}{|4 B|^{ \frac{1}{n}-1}}\leq g(\ve\lambda).
\end{equation} 
We use the comparison estimates, between solution to~\eqref{eq:main} and $v\in u+W^{1,G}_0(B_R)$ solving~\eqref{eq:comp-map}. Note that the Jensen inequality   implies
\[\barint_{4B}|Du -Dv |dx= g^{-1}\circ g \left(\barint_{4B}|Du -Dv |dx\right)\leq  g^{-1} \left(\barint_{4B} g \left(|Du -Dv |\right)dx\right),\]
which estimated further due to Proposition~\ref{prop:comp-B-xi}   and~\eqref{mu4B} gives
\begin{equation}
\label{comp-4B-lambda}
\barint_{4B}|Du -Dv |dx\leq c g^{-1}  \left(
 {|\mu|(\overline{4B})}{|4 B|^{ \frac{1}{n}-1}}\right)  \leq c\, \ve\lambda .
\end{equation}
On the other hand, for fixed $\lambda$ function $\bar{v}=v/\lambda$ satisfies rescaled equation of a form~\eqref{eq:comp-map} with the operator $\bar{a}(\cdot)=a(\lambda \cdot)$ controlled by $\bar{G}(\cdot)=  {G\left( {\lambda}\cdot\right) },$  Proposition~\ref{prop:homo-problem}, {\it (ii)} implies that
\begin{equation}\label{comp-map-lambda} 
\begin{split}\barint_{2B} G^{\chi}\left(\frac{ |Dv|}{\lambda}\right)dx
= \barint_{2B}\bar{G}^{\chi}(|Dv|)dx&\leq \bar{c}_1\bar{G}^{\chi}\left(\barint_{4B}|Dv|dx\right)+\bar{c}_2\\
&=\bar{c}_1 {G}^{\chi}\left(\frac{1}{\lambda}\barint_{4B}|Dv|dx\right)+\bar{c}_2\leq  c,\end{split}
\end{equation}
where the last inequality is justified by~\eqref{Du<lambda}.

Then, passing to $B$ and using weak-type estimates, applying $G\in\Delta_2$,~\eqref{comp-map-lambda}, and~\eqref{comp-4B-lambda}, we obtain
\begin{equation}\label{super-level-HTeps}
\begin{split}
&|\{x\in B: \Mb(|Du|)(x)>HT\lambda\}|\\
&\qquad\leq 
|\{x\in 2B: \Mb(|Dv|)(x)>HT\lambda/2\}|+
|\{x\in 2B: \Mb(|Du-Dv|)(x)>HT\lambda/2\}|\\
 &\qquad\leq \frac{c }{G^{\chi}(HT)}\int_{2B}G^{\chi}\left(\frac{2|Dv|}{\lambda}\right)dx+\frac{c }{HT\lambda}\int_{2B}|Du-Dv|dx\\
 &\qquad\leq \frac{cc_{\Delta_2}}{G^{\chi}(HT)}\int_{2B}G^{\chi}\left(\frac{ |Dv|}{\lambda}\right)dx+\frac{c}{HT\lambda}\int_{2B}|Du-Dv|dx\\
 &\qquad\leq c^*|B|\left(\frac{1}{G^{\chi}(HT)}+\frac{\ve}{HT}\right).
\end{split}
\end{equation}
 Note that $c^*>1$. Considering \begin{equation}
\label{choice:eps-H}
\ve= \ve(T,H)=c^{*}\frac{HT}{G^{\chi}(HT) }\qquad\text{and}\qquad H= {20^n}{c^* }>20^n 
\end{equation}
 we obtain in~\eqref{super-level-HTeps}
\begin{equation}\label{B-super-level-T}
\begin{split}
|\{x\in B: \Mb(|Du|)(x)>HT\lambda\}|&\leq  \frac{2|B|}{20^n} \frac{1}{ G^{\chi}(HT) } \leq \frac{|B|}{4\cdot 5^n} \frac{1}{ G^{\chi}(HT) }. 
\end{split}
\end{equation}
To come back to $B_0$ we note that having arbitrary $\tilde{B}\subset 2B$ we can show for any $x\in B$ it holds that 
\begin{equation}
\label{for-B0-super-level-T}
\MDu(x)\leq \max\{\Mb(|Du|)(x),12^n\lambda\}.
\end{equation} considering three cases: $\tilde{B}\subset 2B$, ($\tilde{B}\not\subset 2B$ and $5\tilde{B}\subset 2B_0$), and ($\tilde{B}\not\subset 2B$ and $5\tilde{B}\not\subset 2B_0$).
\begin{itemize}
\item[{\it i)}] When $\tilde{B}\subset 2B$, the definition of the maximal operator enables to write
\[\barint_{\wt{B}}|Du|dy\leq \Mb(|Du|)(x).\]
\item[{\it ii)}] When $\wt{B}\not\subset 2B$ and $5\wt{B}\subset 2B_0$, then there exist $x,y\in\wt{B}$, such that $x\in B$ and $y\not\in 2B$. Let $r_B,r_{\wt{B}},x_B,x_{\wt{B}}$, be such that $B=B(x_B,r_{{B}})$ and ${\wt{B}}=B(x_{\wt{B}},r_{\wt{B}})$. To show that $B\subset 5\tilde{B}$ we fix arbitrary $z\in B$ and notice that
\[|x_{\wt{B}}-z|\leq |x_{\wt{B}}-x|+|x-z|<2r_B+r_{\wt{B}}<5r_{\wt{B}}.\]
Therefore $z\in 5{\wt{B}}$ and in the view of~\eqref{M<lambda} we have 
\[\barint_{\wt{B}}|Du|dy\leq 5^n\barint_{5\wt{B}}|Du|dy\leq 5^n \Mb(|Du|)(\wt{x})\leq 5^n\lambda.\]
\item[{\it iii)}] When $\tilde{B}\not\subset 2B$ and $5\tilde{B}\not\subset 2B_0$, and if $r_{{B_0}},x_{{B_0}}$ are such that $B_0=B(x_{B_0},r_{{B_0}})$, then there exists $z\in B$, such that $|x_{\wt{B}}-z|=5r_{\wt{B}}$ and $|x_{{B_0}}-z|>2r_{{B_0}}.$ For $x\in{\wt{B}}$ we have
\[2r_{B_0}\leq |x_{{B_0}}-z|\leq |x_{{B_0}}-x|+|x-x_{\wt{B}}|+|x_{\wt{B}}-z|\leq r_{B_0}+r_{\wt{B}}+5r_{\wt{B}}=r_{{B_0}}+6r_{\wt{B}}.\]
Consequently, $r_{{B_0}}\leq 6r_{\wt{B}}$ and, due to ~\eqref{M<lambda}, we get 
\[\barint_{\wt{B}}|Du|dy\leq 12^n\barint_{2 {B_0}}|Du|dy\leq 12^n \Mb(|Du|)(\wt{x})\leq 12^n\lambda.\]
\end{itemize}
Thus, we have~\eqref{for-B0-super-level-T}.

Taking into account~\eqref{B-super-level-T} and~\eqref{for-B0-super-level-T}, we get the super-level set estimate for $\MDu$, namely \[|\{x\in B: \MDu(x)>HT\lambda\}|  \leq \frac{|B|}{4\cdot 5^n} \frac{1}{ G^{\chi}(HT) }.\]
This contradicts with~\eqref{if}, which completes the proof.
\end{proof}

Now we are in position to derive  the main tool of the paper, i.e. the super-level set estimates.

\begin{prop}[Super-level set estimates]
\label{prop:super-level-est} Suppose $u\in W^{1,G}(\Omega)$ is a weak solution to~\eqref{eq:main-k-for-SOLA}.  Let $B$ be a ball such that $2B\subset\subset\Omega$ and $0<r_1<r_2\leq 1$. There exist constants $H=H(n,G)>>1$ and $c(n)\geq 1 $, such that the following holds true: for every $T>1$ there exists $\ve=\ve(n,G,T)\in(0,1),$ such that
\begin{equation}
\label{eq:super-level-est}\begin{split}
&|\{x\in r_1 B: \MDu(x)>HT\lambda\}|\\ &\qquad\qquad\leq\frac{1}{G^{\chi}(HT)}|\{x\in r_2 B: \MDu(x)> \lambda\}|+|\{x\in r_1B:\Mmu(x)> g(\ve\lambda)\}|\end{split}
\end{equation}
holds whenever\begin{equation}
\label{lambda>lambda0-def}
\lambda\geq \lambda_0:=\frac{c(n) }{(r_2-r_1)^n}\frac{G^{\chi}(HT)}{HT}\barint_{2B}|Du|\,dx
\end{equation}
\end{prop}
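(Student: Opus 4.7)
The plan is to view \eqref{eq:super-level-est} as the output of the Krylov--Safonov density lemma (Lemma~\ref{lem:covering}) applied to a carefully chosen pair of super-level sets, with the density threshold tuned to the decay factor $1/G^{\chi}(HT)$ we want to produce. Lemma~\ref{lem:Binc} already encodes the nontrivial ``density implies full inclusion'' step for the maximal operator of $Du$, so the proposition should follow by reorganizing that lemma into the density-lemma framework and separately controlling the first hypothesis by a weak-type estimate.

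Concretely, I would fix $H,\ve$ and the constant $c^*$ as in \eqref{choice:eps-H} from the proof of Lemma~\ref{lem:Binc}, take $B_0 = B$, and set
\[
E = \{x\in B_0:\MDu(x)>HT\lambda\}\cap\{x\in B_0:\Mmu(x)\leq g(\ve\lambda)\},\qquad
F = \{x\in B_0:\MDu(x)>\lambda\}.
\]
I then apply Lemma~\ref{lem:covering} with $\delta = 1/G^{\chi}(HT)$ and $d=2$ (matching the constraint ``$2\tilde B\subset B_0$'' used in Lemma~\ref{lem:Binc}). The second hypothesis of the density lemma is exactly Lemma~\ref{lem:Binc}: whenever a ball $\tilde B$ with $2\tilde B\subset B_0$ satisfies $|E\cap\tilde B| > \frac{\delta}{5^n}|\tilde B|$, Lemma~\ref{lem:Binc} forces $\tilde B\subset F$ (interpreting the conclusion \eqref{then} at level $\lambda$ rather than $HT\lambda$, as its own proof establishes).

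The first hypothesis is where the lower bound on $\lambda$ enters. Using the standard weak $(1,1)$ bound for the restricted maximal operator $\Mb$, together with the elementary comparison $\MDu\leq\max\{\Mb(|Du|),12^n\lambda\}$ that already appears in \eqref{for-B0-super-level-T}, I estimate
\[
|E^{r_1}| \leq |\{x\in B_0:\MDu(x)>HT\lambda\}|
\leq \frac{c(n)}{HT\lambda}\int_{2B_0}|Du|\,dx.
\]
Requiring the right-hand side to be bounded by $\frac{\delta}{5^n}\bigl(\frac{r_2-r_1}{2}\bigr)^n|B_0|$ gives exactly the threshold in \eqref{lambda>lambda0-def} after absorbing purely dimensional factors into the constant $c(n)$.

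Having verified both hypotheses, Lemma~\ref{lem:covering} yields $|E^{r_1}|\leq \delta\,|F^{r_2}|$. Finally, splitting
\[
\{\MDu>HT\lambda\}\cap r_1 B \subset E^{r_1}\cup\bigl(\{\Mmu>g(\ve\lambda)\}\cap r_1 B\bigr)
\]
and inserting the density-lemma bound on $|E^{r_1}|$ produces \eqref{eq:super-level-est}. The only delicate point is making the bookkeeping between $H$, $T$ and $\ve$ consistent with the choice \eqref{choice:eps-H} already fixed in Lemma~\ref{lem:Binc}; beyond that, the argument is a clean assembly of Lemma~\ref{lem:Binc} and the weak-type estimate via Lemma~\ref{lem:covering}.
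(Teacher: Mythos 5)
Your proposal follows the paper's proof essentially verbatim: the paper takes the same sets $E$ and $F$, checks the measure hypothesis of Lemma~\ref{lem:covering} with $\delta=1/G^{\chi}(HT)$ and $d=2$ via the weak $(1,1)$ estimate together with the choice of $\lambda_0$, and uses Lemma~\ref{lem:Binc} for the density hypothesis before splitting the super-level set as you do. Your remark that the conclusion \eqref{then} of Lemma~\ref{lem:Binc} should be read at level $\lambda$ (which is what its proof actually establishes and what the density lemma requires) is a correct and welcome clarification.
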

\begin{proof}
Take\[\begin{split}
E&=\{x\in B_0:\ \MDu>HT\lambda\ \ \text{ and }\ \ \Mmu\leq g(\ve\lambda)\},\\
F&=\{x\in B_0:\ \MDu> \lambda\}.
\end{split}\]

Then weak-type estimate implies\[\begin{split}
|E^{r_1}|\leq |E|&\leq |\{x\in B_0:\ \MDu>HT\lambda \}| \leq |\{x\in 2B_0:\ \MDu>HT\lambda \}|\\
&\leq \frac{c2^n|B_0|}{HT\lambda} \barint_{2B_0}|Du|dx.
\end{split}\]
Considering $H$ as in~\eqref{choice:eps-H} and $\lambda\geq \lambda_0$, we infer further
\[|E^{r_1}|\leq  \frac{ 2^n|B_0|(r_2-r_1)^n HT}{20^n HTG^{\chi}(HT) } = \frac{|B_0|}{5^n}\left(\frac{  r_2-r_1}{2}\right)^n\frac{1}{G^{\chi}(HT)}.\]
Therefore, Lemma~\ref{lem:covering} with $E,F$ as above, $ {\delta}=1/G^{\chi}(HT)$, and $d=2$ gives $|E^{r_1}|\leq {\delta}|F^{r_2}|,$
implying~\eqref{eq:super-level-est}.
\end{proof} 

\subsubsection{Lorentz estimates}\label{ssec:Lor-max}

This section is devoted to the proof of the final version of the main tool of the paper, i.e. Lorentz estimates. Recall that the notation given by~\eqref{MDu-Mmu-def}.

\begin{prop} \label{prop:maximal-est}
Suppose $u\in W^{1,G}(\Omega)$ is a weak solution to~\eqref{eq:main-k-for-SOLA} and $\chi$ is the~higher integrability exponent (see Proposition~\ref{prop:homo-problem}, {\it (ii)}). Then for every $(t,\gamma)\in[1, \chi i_G/(s_G-1))\times(0,\infty]$,
there exists a~constant $c=c(c,G,t,\gamma)$ for which
\begin{equation}
\label{grad-est-Lor} 
\avenorm{ g(|Du|)}_{L(t ,\gamma  )(B/2)}\leq c\,g\left( \barint_{2B }|Du|dx\right) +c \avenorm{ \Mbm(\mu) } _{L(t ,\gamma )(B)}
\end{equation}
holds for every $B$, such that $2B\subset\subset \Omega$.
\end{prop}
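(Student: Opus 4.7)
The plan is to integrate the super-level set decay of Proposition~\ref{prop:super-level-est} against the Lorentz weight along a dyadic sequence of levels. Since by Lebesgue differentiation $|Du(x)|\leq \MDu(x)$ almost everywhere on $B$ and $g$ is monotone, it suffices to bound $\|g(\MDu)\|_{L(t,\gamma)(B/2)}$ by the right-hand side of~\eqref{grad-est-Lor}.

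Fix $1/2\leq r_1<r_2\leq 1$, choose $\lambda_*\geq \lambda_0(r_1,r_2)$ as in~\eqref{lambda>lambda0-def}, and set $\lambda_k=(HT)^k\lambda_*$. The dyadic characterisation of the Lorentz (quasi)norm gives
\[
\|g(\MDu)\|_{L(t,\gamma)(r_1B)}^{\gamma}\simeq g(\lambda_*)^{\gamma}|r_1B|^{\gamma/t}+\sum_{k\geq 0}g(\lambda_{k+1})^{\gamma}\bigl|\{\MDu>\lambda_{k+1}\}\cap r_1B\bigr|^{\gamma/t}
\]
(with the $\sup$-variant when $\gamma=\infty$). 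Applying Proposition~\ref{prop:super-level-est} to each of the super-level sets, using $(a+b)^{\gamma/t}\lesssim a^{\gamma/t}+b^{\gamma/t}$, and exploiting $g(HT\lambda)\lesssim (HT)^{s_G-1}g(\lambda)$ together with $G^{\chi}(HT)\gtrsim (HT)^{\chi i_G}$ from~\eqref{B-power-compar}, I would arrive at
\[
\Phi(r_1)\leq \Theta\,\Phi(r_2)+c\,\|\Mbm(\mu)\|_{L(t,\gamma)(B)}^{\gamma}+c\,g(\lambda_*)^{\gamma}|B|^{\gamma/t},
\]
where $\Phi(r):=\|g(\MDu)\|_{L(t,\gamma)(rB)}^{\gamma}$ and $\Theta\lesssim (HT)^{\gamma[(s_G-1)-\chi i_G/t]}$. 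The parameter $\ve=\ve(n,G,T)$ of the super-level estimate enters only through the constant in front of $\|\Mbm(\mu)\|$ once one notices that $g(\lambda_{k+1})\lesssim C(T,\ve)\,g(\ve\lambda_k)$ by the comparison of $g$ with power functions in~\eqref{B-power-compar}, so after reindexing the $\mu$-sum is controlled by $\|\Mbm(\mu)\|_{L(t,\gamma)(B)}^{\gamma}$.

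The crucial observation is that the exponent of $HT$ in $\Theta$ is strictly negative precisely under the hypothesis $t<\chi i_G/(s_G-1)$, so $T$ can be fixed so large that $\Theta\leq 1/2$; this simultaneously fixes $\ve$. Since $g(\lambda_*)^{\gamma}\lesssim (r_2-r_1)^{-n s_g\gamma}\,g\bigl(\barint_{2B}|Du|\,dx\bigr)^{\gamma}$ by the choice of $\lambda_*$ and the growth of $g$, the standard absorbing iteration (in the spirit of Lemma~\ref{lem:absorb1}) applied to $\Phi$ on $[1/2,1]$ removes the self-referential term $\Theta\,\Phi(r_2)$ and delivers
\[
\Phi(1/2)\leq c\,g\Bigl(\barint_{2B}|Du|\,dx\Bigr)^{\gamma}|B|^{\gamma/t}+c\,\|\Mbm(\mu)\|_{L(t,\gamma)(B)}^{\gamma};
\]
taking $\gamma$-th roots and dividing by $|B|^{1/t}$ produces~\eqref{grad-est-Lor}. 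The main obstacle is the careful bookkeeping of the interplay between the exponents $i_G$, $s_G$, $\chi$ and $t$ and the radius-dependent factor $\lambda_0(r_1,r_2)$, which must line up exactly so that the bound $t<\chi i_G/(s_G-1)$ appears naturally and so that the absorbing iteration closes; the Marcinkiewicz endpoint $\gamma=\infty$ is handled by replacing sums by suprema with only cosmetic changes.
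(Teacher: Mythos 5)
Your overall strategy is the same as the paper's: pair the super-level set decay of Proposition~\ref{prop:super-level-est} with the Lorentz weight, use $t<\chi i_G/(s_G-1)$ to fix $T$ (and hence $\ve$) so that the self-referential term can be absorbed, control the low levels through $\lambda_0$ from~\eqref{lambda>lambda0-def}, iterate in the radius in the spirit of Lemma~\ref{lem:absorb1}, reduce to $\MDu$ via Lebesgue differentiation, and treat $\gamma=\infty$ with suprema. However, there is a genuine gap in the quantitative step you single out as the crux. Your dyadic characterisation $\|g(\MDu)\|^{\gamma}_{L(t,\gamma)(r_1B)}\simeq g(\lambda_*)^{\gamma}|r_1B|^{\gamma/t}+\sum_{k}g(\lambda_{k+1})^{\gamma}|\{\MDu>\lambda_{k+1}\}\cap r_1B|^{\gamma/t}$ with levels $\lambda_k=(HT)^k\lambda_*$ is \emph{not} uniform in $T$: on each block $[g(\lambda_k),g(\lambda_{k+1})]$ the weight varies by a factor comparable to $(HT)^{s_G-1}$ (Lemma~\ref{g<lambda}) while the distribution function is only controlled at one endpoint, so the upper direction of your ``$\simeq$'' carries a constant of order $(HT)^{\gamma(s_G-1)}$. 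Since the decay step itself costs another $g(\lambda_{k+1})^{\gamma}\lesssim (HT)^{\gamma(s_G-1)}g(\lambda_k)^{\gamma}$ when you shift indices to land on $\Phi(r_2)$, the coefficient you actually obtain is $\Theta\lesssim (HT)^{\gamma[2(s_G-1)-\chi i_G/t]}$, not the claimed $(HT)^{\gamma[(s_G-1)-\chi i_G/t]}$. Absorption then requires $t<\chi i_G/(2(s_G-1))$, which is strictly smaller than the stated range; since the higher integrability exponent $\chi$ is in general only slightly larger than $1$, this fails to reach $t=i_G/(s_G-1)$, i.e.\ exactly the exponent needed at the borderline $q=n i_G/(n s_G-n+i_G)$ in Theorem~\ref{theo:Lorentz-est}.

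The paper avoids this loss by not discretising at all: it multiplies~\eqref{eq:super-level-est}, raised to the power $\gamma/t$, by $g^{\gamma}(HT\lambda)\,d\lambda/\lambda$ and integrates in $\lambda$; on the left the weight $g^{\gamma}(HT\lambda)$ matches the level $HT\lambda$ exactly, so the change of variables is free, and the factor $(HT)^{\gamma(s_G-1)}$ is paid exactly once, on the right, via Lemma~\ref{g<lambda} (together with $(HT/\ve)^{\gamma(s_G-1)}$ on the $\mu$-term, which is harmless since $T$ is fixed structurally). If you insist on a dyadic scheme, you must decouple the bookkeeping from the Lorentz norm during the iteration: either take a grid with a fixed, $T$-independent ratio and apply the super-level estimate with jump $HT$ across blocks, or run the radius-absorption directly on the dyadic sums and convert to $\|\cdot\|_{L(t,\gamma)}$ only at the very end, after $T$ has been fixed, so the $(HT)^{\gamma(s_G-1)}$ conversion constant no longer multiplies $\Phi(r_2)$. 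With either repair your remaining ingredients (treatment of the $\mu$-term through $g(\ve\lambda_k)$, the bound on $g(\lambda_*)$ from~\eqref{lambda>lambda0-def}, the final application of an absorption lemma, and the $\gamma=\infty$ case) are consistent with the paper's proof.
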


\begin{proof} We will show Lorentz estimates for the maximal operator 
\begin{equation}
\label{grad-est-1} \avenorm{ g( \Mb(|Du|)) }_{L(t ,\gamma  )(B/2)}\leq c\,g\left( \barint_{2B }|Du|dx\right) +c \avenorm{ \Mbm(\mu) }_{L(t ,\gamma )(B)},
\end{equation}
which directly implies~\eqref{grad-est-Lor} via the Lebesgue differentiation theorem. First we concentrate on the case $\gamma<\infty$ and then $\gamma=\infty$.

\medskip

\textbf{Case $0<\gamma<\infty$.} When we raise both sides of~\eqref{eq:super-level-est} to power $\frac{\gamma}{t}$, then multiply by $g^{\gamma}(HT\lambda)/\lambda$ and integrate, we get 
\begin{multline*} \int_{\lambda_0}^{\lambda_1}g^{\gamma }(HT\lambda) |\{x\in r_1 B: g(\Mb(|Du|)(x))>g(HT\lambda)\}|^\frac{\gamma}{t}\frac{d\lambda}{\lambda}\\
   \leq \frac{c}{G^{\chi \gamma/t}(HT)}\int_{\lambda_0}^{\lambda_1}g^{\gamma }(HT\lambda) |\{x\in r_2 B:  \Mb(|Du|)(x)> \lambda\}|^\frac{\gamma}{t}\frac{d\lambda}{\lambda}\\
   + c\int_{\lambda_0}^{\lambda_1}g^{\gamma }(HT\lambda)  |\{x\in r_1B: \Mbm(\mu)(x) >  g(\ve\lambda) \}|^\frac{\gamma}{t}\frac{d\lambda}{\lambda}.
\end{multline*}
Therefore, changing variables and Lemma~\ref{g<lambda} imply
\begin{multline*}  \int_{g(HT\lambda_0)}^{g(HT\lambda_1)}\lambda^{\gamma} |\{x\in r_1 B: g(\Mb(|Du|)(x))> \lambda\}|^\frac{\gamma}{t}\frac{d\lambda}{\lambda}\\
   \leq  {c }\left(\frac{ (HT)^{s_G-1} }{G^{\chi/t}(HT)}\right)^{\gamma}\int_{\lambda_0}^{\lambda_1}g^\gamma(\lambda)|\{x\in r_2 B: g( \Mb(|Du|)(x))> g(\lambda)\}|^\frac{\gamma}{t}\frac{d\lambda}{\lambda}\\
   + c \left(\frac{ HT }{\ve}\right)^{\gamma(s_G-1)}\int_{\lambda_0}^{\lambda_1}g^\gamma(\ve\lambda) |\{x\in r_1B: \Mbm(\mu)(x) >  g(\ve\lambda) \}|^\frac{\gamma}{t}\frac{d\lambda}{\lambda}\\
   \leq  {c }\left(\frac{ (HT)^{s_G-1} }{G^{\chi/t}(HT)}\right)^{\gamma}\int_{g(\lambda_0)}^{g(\lambda_1)} \lambda^\gamma|\{x\in r_2 B:  g( \Mb(|Du|)(x))>  \lambda \}|^\frac{\gamma}{t}\frac{d\lambda}{\lambda}\\
   + c \left(\frac{ HT }{\ve }\right)^{\gamma(s_G-1)}\int_{g(\lambda_0)}^{g(\lambda_1)}\lambda^{\gamma  }|\{x\in r_1B: \Mbm(\mu)(x) >  \lambda \}|^\frac{\gamma}{t}\frac{d\lambda}{\lambda}.
\end{multline*}

We add to both sides the quantity  
\begin{multline*} \int_0^{g(HT\lambda_0)} \lambda^{\gamma }|\{x\in r_1 B: g(\Mb(|Du|)(x))> \lambda\}|^\frac{\gamma}{t}\frac{d\lambda}{\lambda} \leq \frac{ g^\gamma(HT\lambda_0) }{\gamma +1}|B|^\frac{\gamma}{t}\\
 \leq c(T) |B|^\frac{\gamma}{t}\left(\frac{1}{ r_2-r_1  } \right)^{n \gamma  (s_G-1)}g^\gamma\left(\barint_{2B }|Du|dx\right) ,
\end{multline*}
estimated in the above way due to definition of $\lambda_0$~\eqref{lambda>lambda0-def}. We extend the domain of integration on the right-hand side to obtain
\begin{multline} \label{est:0-HTL} \int_0^{g(HT\lambda_1)}\lambda^{\gamma }|\{x\in r_1 B: g(\Mb(|Du|)(x))> \lambda\}|^\frac{\gamma}{t}\frac{d\lambda}{\lambda}\\
   \leq c(T) |B|^\frac{\gamma}{t}\left(\frac{1}{ r_2-r_1  } \right)^{n \gamma  (s_G-1)}g^\gamma\left(\barint_{2B }|Du|dx\right)\\
   +c\left(\frac{ (HT)^{s_G-1} }{G^{\chi/t}(HT)}\right)^\gamma\int_{0}^{g(HT\lambda_1)}\lambda^{\gamma   }|\{x\in r_2 B: g(\Mb(|Du|)(x))> \lambda\}|^\frac{\gamma}{t}\frac{d\lambda}{\lambda}\\
   + c \left(\frac{ HT}{\ve }\right)^{\gamma(s_G-1)}\int_{0}^{\infty}\lambda^{\gamma}|\{x\in r_1B: \Mbm(\mu)(x) >  \lambda \}|^\frac{\gamma}{t}\frac{d\lambda}{\lambda}
\end{multline}
Furthermore, recall that $H=H(n,G)>0$ is a fixed (big) constant. We can choose $T_0=T_0(n,G)$ big enough for $T>T_0$ to satisfy \[c\left(\frac{  ( HT)^{s_G-1} }{G^{\chi/t}(HT)}\right)^{\gamma}\leq\frac{1}{2},\]
provided $t<\chi i_G/(s_G-1)$. Indeed, note that for large values of $r$ we have \[\left( r^{s_G-1}  G^{-\chi /t}(r)\right)'\leq\frac{1}{t}\Big( t(s_G-1)-\chi {i_G} \Big) {g(r)G^{-\chi/t}(r)}{r^{s_G-2}}.\] As for the last term of~\eqref{est:0-HTL} we observe that
\begin{equation*}\begin{split}
\int_0^{\infty}\lambda^{\gamma }|\{x\in  B:  \Mbm(\mu)(x) > \lambda\}|^\frac{\gamma }{t }\frac{d\lambda}{\lambda} 
&=\frac{1}{\gamma }\| \Mbm(\mu) \|^{\gamma }_{L(t  ,\gamma  )(B)}.\end{split}
\end{equation*}

We apply Lemma~\ref{lem:absorb1} with $R=1$ and
\[\phi(\kappa)= \int_0^{g(HT\lambda_1)}\lambda^{\gamma }|\{x\in \kappa B: g(\Mb(|Du|)(x))> \lambda\}|^\frac{\gamma  }{t  }\frac{d\lambda}{\lambda}\qquad\text{for}\qquad \kappa\in(0,1].\]
To do it we sum up the above remarks, so that the estimate~\eqref{est:0-HTL} becomes 
\[\phi(r_1)\leq \frac{1}{2}\phi(r_2)+c(T) |B|^\frac{\gamma}{t}\left(\frac{1}{ r_2-r_1  } \right)^{n \gamma  (s_G-1)}g^\gamma\left(\barint_{2B }|Du|dx\right)+\frac{1}{\gamma }\|\Mbm(\mu)  \|^{\gamma }_{L(t,\gamma )(B)} \]
and Lemma~\ref{lem:absorb1} implies
\[\int_0^{g(HT\lambda_1)}\lambda^{\gamma }|\{x\in  B/2: g(\Mb(|Du|)(x))> \lambda\}|^\frac{\gamma  }{t  }\frac{d\lambda}{\lambda}\leq c|B|^\frac{\gamma}{t} g^\gamma\left(\barint_{2B }|Du|dx\right) +c \| \Mbm(\mu) \|^{\gamma  }_{L(t  ,\gamma  )(B)}.\]
Now we let $\lambda_1\to\infty$ and get~\eqref{grad-est-1} for $0<\gamma<\infty$.

\bigskip

\textbf{Case $\gamma=\infty$.} Multiplying both sides of~\eqref{eq:super-level-est} by $g^t(HT\lambda)$ and computing supremum  we get
\begin{multline*}
\sup_{\lambda_0\leq\lambda\leq \lambda_1} g^t(HT\lambda) |\{x\in r_1 B: g(\Mb(|Du|)(x))>g(HT\lambda)\}|\\ \leq\frac{ (HT)^{t(s_G-1)} }{G^{\chi}(HT)}\sup_{\lambda_0\leq\lambda\leq \lambda_1} g^t(\lambda)|\{x\in r_2 B: g(\Mb(|Du|)(x))> g(\lambda)\}|\\ +\left(\frac{HT}{\ve}\right)^{t(s_G-1)} \sup_{\lambda_0\leq\lambda\leq \lambda_1} g^t(\ve\lambda)|\{x\in r_1B: \Mbm(\mu)(x) > g(\ve\lambda)\}|.\end{multline*}
As in the case of finite $\gamma$ we change the variables, use the definition of $\lambda_0$, and add  to both sides the initial term
\[\sup_{0\leq\lambda\leq g(HT\lambda_0)}\lambda^{t}|\{x\in r_2 B: g(\Mb(|Du|)(x))>\lambda\}|\leq g^t(HT\lambda_0) |B|\leq \frac{c|B| }{ \left(r_2-r_1  \right)^{n t }} g^t\left(\barint_{2B }|Du|dx\right) \]
to obtain
\begin{equation}
\begin{split}\label{to-est-inf}
&\sup_{ 0\leq \lambda\leq g(HT\lambda_1)}\lambda^{t}|\{x\in r_1 B: g(\Mb(|Du|)(x))> \lambda\}| 
 \leq  \\
&\qquad \qquad\qquad\qquad\qquad  \frac{c|B| }{ \left(r_2-r_1  \right)^{n t }}\  g^t\left(\barint_{2B }|Du|dx\right) \\
&\quad\qquad\qquad\qquad\qquad+\frac{(HT)^{t (s_G-1)}}{G^{\chi}(HT)}\sup_{ 0\leq\lambda\leq g(HT\lambda_1)}\lambda^{t  }|\{x\in r_2 B: g(\Mb(|Du|)(x))> \lambda\}|\\
&\quad\qquad\qquad\qquad\qquad+\left(\frac{HT}{\ve}\right)^{t }\sup_{ 0\leq\lambda\leq \lambda_1}\lambda^{t }|\{x\in r_1B: \Mbm(\mu)(x) > \lambda\}|.\end{split}
\end{equation}
We apply Lemma~\ref{lem:absorb1} with $R=1$ and
\[\phi(\kappa)= \sup_{0\leq \lambda\leq HT\lambda_1}\lambda^{t }|\{x\in \kappa B: g(\Mb(|Du|)(x))> \lambda\}| \qquad\text{for}\qquad \kappa\in(0,1].\]
Note that due to the upper bound on $t$, we can choose $T_0$, such that\[\frac{(HT)^{t(s_G-1) }}{G^{\chi}(HT)}\leq\frac{1}{2}.\] Recall that $H$ is an absolute constant and\[
\sup_{\lambda>0}\lambda^t |\{x\in r_1 B: \Mb(\mu)(x)> \lambda\}|=\|\Mbm(\mu)\|^t_{L(t,\infty)(r_1 B)} \]
from~\eqref{to-est-inf} we get 
\[\phi(r_1)\leq\frac{1}{2}\phi(r_2)+\frac{c|B| }{ \left(r_2-r_1  \right)^{n t }} g^t\left(\barint_{2B }|Du|dx\right) +\|\Mbm(\mu)\|^t_{L(t,\infty)(r_1 B)}.\]
Therefore, Lemma~\ref{lem:absorb1} implies that
\[\sup_{0\leq \lambda\leq HT\lambda_1}\frac{\lambda^{t }}{|B/2|}|\{x\in   B/2: g(\Mb(|Du|)(x))> \lambda\}|\leq c g^t\left(\barint_{2B }|Du|dx\right)+c\| \Mbm(\mu)\|^t_{L(t,\infty)(B)}.\]
To conlude~\eqref{grad-est-1} in the case of $\gamma=\infty$ it suffices to let $\lambda_1\to\infty$.
\end{proof}

\subsection{Preliminary Morrey estimates}

\begin{prop}[Preliminary Morrey estimates]\label{prop:grad-u-est}
Suppose $u\in W^{1,G}(\Omega)$ is a weak solution to~\eqref{eq:main-k-for-SOLA} and $q$ and $\theta$ satisfy~\eqref{q-Morrey}, then there exist a constant $c=c(n,\nu,s_G)>0$, such that
\begin{equation}\label{eq:Morrey-1st-est}
 [ g(|Du|)]_{L^{1,\frac{\theta-q}{q}}(\Omega_1)}\leq c\big({\rm dist}(\Omega_1,\pa \Omega_2)\big)^{ \frac{\theta-q}{q }-n} \|g(|Du|)\|_{L^1(\Omega_2)}+c\|\mu\|_{L^{q,\theta}(\Omega_2)}. \end{equation}
\end{prop}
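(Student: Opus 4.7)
My proof will proceed by a classical Campanato-type iteration: compare $u$ on each small ball $B_R(x_0)\subset\Omega_1$ with the solution of the associated homogeneous Dirichlet problem, combine the homogeneous Morrey decay from Proposition~\ref{prop:homo-problem}~(iv) with the measure-data comparison from Corollary~\ref{coro:comparison}, and iterate. Setting $\sigma:=(\theta-q)/q$ and $R_0:={\rm dist}(\Omega_1,\partial\Omega_2)$, the claim is equivalent to the scalar bound
\[\int_{B_\vr(x_0)} g(|Du|)\,dx \leq c\,\vr^{n-\sigma}\Big[R_0^{\sigma-n}\|g(|Du|)\|_{L^1(\Omega_2)}+\|\mu\|_{L^{q,\theta}(\Omega_2)}\Big]\]
for every ball $B_\vr(x_0)\subset\Omega_1$; for $\vr$ of order $R_0$ this is immediate from monotonicity of the integral together with $\vr^{\sigma-n}\leq c\,R_0^{\sigma-n}$, so the substance is at small scales.

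\textbf{One-step inequality.} Fix $x_0\in\Omega_1$, $R\leq R_0$, and let $v\in u+W^{1,G}_0(B_R(x_0))$ solve~\eqref{eq:comp-map}. Using $g\in\Delta_2$ (Lemma~\ref{lem:D2}) together with the triangle inequality gives $\int_{B_\vr}g(|Du|)\leq c\int_{B_\vr}g(|Dv|)+c\int_{B_\vr}g(|Du-Dv|)$ and analogously for $\int_{B_R}g(|Dv|)$. By~\eqref{eq:cominMor} of Corollary~\ref{coro:comparison}, $\int_{B_R}g(|Du-Dv|)\leq c\,R^{-\sigma}\|\mu\|_{L^{q,\theta}(\Omega_2)}$, and rewriting~\eqref{inq:Morrey-continuity} in integral form yields $\int_{B_\vr}g(|Dv|)\leq c(\vr/R)^{n-\beta}\int_{B_R}g(|Dv|)$ for $\vr\leq R/2$, with $\beta\in(0,n)$ supplied by Proposition~\ref{prop:homo-problem}~(iv). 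Combining these I arrive at the iterative inequality for $\phi(\vr):=\int_{B_\vr(x_0)}g(|Du|)\,dx$:
\[\phi(\vr)\leq c\Big(\frac{\vr}{R}\Big)^{n-\beta}\phi(R)+c\,R^{-\sigma}\|\mu\|_{L^{q,\theta}(\Omega_2)},\qquad 0<\vr\leq R/2\leq R_0/2.\]

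\textbf{Iteration and main obstacle.} A standard Campanato-style iteration lemma, applied along a geometric sequence of radii $\tau^k R_0$ with $\tau$ sufficiently small, propagates this recursion into $\phi(\vr)\leq c\,\vr^{n-\sigma}[R_0^{\sigma-n}\phi(R_0)+\|\mu\|_{L^{q,\theta}(\Omega_2)}]$; estimating $\phi(R_0)\leq\|g(|Du|)\|_{L^1(\Omega_2)}$ and dividing by $\vr^{n-\sigma}$ then yields the Morrey seminorm bound. The delicate point is ensuring the geometric balance $c\,\tau^{n-\beta}<\tau^{n-\sigma}$: this requires the decay rate $n-\beta$ of the homogeneous problem to dominate the target rate $n-\sigma$, i.e.\ $\sigma\geq\beta$. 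This is precisely what the upper bound on $q$ in~\eqref{q-Morrey} enforces; should the crude exponent $\beta$ coming out of Proposition~\ref{prop:homo-problem}~(iv) prove insufficient, a Gehring-type self-improvement via the higher integrability~\eqref{higher-int} sharpens it, mirroring the analogous step in the $p$-Laplace case of~\cite{min-grad-est}. The remainder is bookkeeping of the constants through the iteration.
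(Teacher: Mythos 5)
Your strategy is the paper's own: on each ball compare $u$ with the solution $v$ of the homogeneous Dirichlet problem, use the decay of Proposition~\ref{prop:homo-problem}~(iv) for $\int_{B_\varrho}g(|Dv|)\,dx$, control the comparison term through Corollary~\ref{coro:comparison}, and transfer the decay to $\phi(\varrho)=\int_{B_\varrho}g(|Du|)\,dx$ by iteration (the paper invokes Lemma~\ref{lem:absorb2} instead of iterating by hand, and checks the exponent balance $\gamma<\delta$ directly from the slightly enlarged open range~\eqref{wt-q-Morrey}; no separate Gehring step is needed, the higher integrability only enters through the definition of $\wt\chi$ there).

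There is, however, one step that fails as written. You quote the comparison estimate as $\int_{B_R}g(|Du-Dv|)\,dx\leq c\,R^{-\sigma}\|\mu\|_{L^{q,\theta}}$ with $\sigma=(\theta-q)/q>0$, so the perturbation in your recursion does not decay (it blows up) as $R\to0$, and no Campanato-type iteration can convert $\phi(\varrho)\leq c(\varrho/R)^{n-\beta}\phi(R)+cR^{-\sigma}\|\mu\|_{L^{q,\theta}}$ into $\phi(\varrho)\lesssim\varrho^{\,n-\sigma}$: iterating along $\varrho_k=\tau^kR_0$ leaves a term of order $\varrho^{-\sigma}\|\mu\|_{L^{q,\theta}}$, which after multiplying by $\varrho^{\sigma-n}$ is useless. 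The exponent must be $n-\sigma$: from Proposition~\ref{prop:comp-B-xi}, H\"older's inequality and the Morrey norm one gets $\int_{B_R}g(|Du-Dv|)\,dx\leq c\,R\,|\mu|(B_R)\leq c\,R^{1+n-\theta/q}\|\mu\|_{L^{q,\theta}}=c\,R^{\,n-\sigma}\|\mu\|_{L^{q,\theta}}$, consistent with the exponent $n-\frac{\theta-\gamma}{\gamma}$ in~\eqref{eq:cominMorLor} (the exponent printed in~\eqref{eq:cominMor} appears to drop the $n$, and you inherited that; the paper's proof effectively uses the corrected form, feeding ${\cal B}=R^{-\gamma}\int_{B_R}g(|Du-Dv|)\,dx$ with $\gamma=n-\sigma$ into Lemma~\ref{lem:absorb2}). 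With $R^{\,n-\sigma}$ in place of $R^{-\sigma}$, your one-step inequality and iteration become exactly the paper's application of Lemma~\ref{lem:absorb2} with decay exponent $n-\beta$ and target exponent $\gamma=n-\sigma$, the balance being the strict inequality $\gamma<\delta$ (your ``$\sigma\geq\beta$'' should be strict), and the remaining bookkeeping coincides with the paper's proof.
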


In fact we will show the result in the broader range of parametrs than~\eqref{q-Morrey}. Namely, the above estimate holds provided
\begin{equation}
\label{wt-q-Morrey}i_G\leq \theta\leq n\qquad\text{and}\qquad 1<q<\frac{\theta i_G\wt{\chi}}{\theta s_G-\theta+i_G\wt{\chi}} 
\end{equation} 
with some $\wt{\chi}=\wt{\chi}(n,i_G,s_G,\nu,L)>1$.

\begin{proof} 

When  $v\in u+W^{1,G}_0(B_R)$ is a solution to~\eqref{eq:comp-map} on $B_R$, we have 
\begin{equation*}
\begin{split}
\int_{B_\vr}g(|Du|)\,dx&\leq 
c\int_{B_\vr}g(|Du-Dv|)\,dx+
c\int_{B_\vr}g(|Dv|)\,dx\\
&\leq 
c\int_{B_R}g(|Du-Dv|)\,dx+c
\left(\frac{\vr}{R}\right)^{n-\beta}\int_{B_R}g(|Dv|)\,dx\\
&\leq 
c\int_{B_R}g(|Du-Dv|)\,dx+
c\left(\frac{\vr}{R}\right)^{n-\beta}\int_{B_R}g(|Du|)\,dx.
\end{split}\end{equation*}
We use above the Jensen inequality, extend the domain of the integration, apply Proposition~\ref{prop:homo-problem} {\it (ii)}, and the fact that $v$ is the solution to the homogeneous problem and thus a minimiser to the~variational formulation.

Let us denote \begin{equation}
\label{gamma:delta}
\wt{\chi}=\min\{\chi,1/\beta\},\qquad\gamma=n-\frac{\theta-q}{q},\qquad\text{and}\qquad \delta=n-\beta(s_G-1),
\end{equation} where $\chi$ is the higher integrability exponent coming from Proposition~\ref{prop:homo-problem}  {\it (ii)} and $\beta$ comes from Proposition~\ref{prop:homo-problem}  {\it (iv)}. Then $\gamma<\delta$. Indeed, due to~\eqref{wt-q-Morrey} we have \[\frac{\wt{\chi}}{\theta}\leq\frac{1}{i_G \beta} \quad\text{and}\quad q<\frac{\theta i_G\wt{\chi}}{\theta s_G-\theta+i_G\wt{\chi}}\leq \frac{\theta}{(s_G-1)\beta+1}.\]
Thus, we can apply Lemma~\ref{lem:absorb2} with \[\phi(\vr)=
\int_{B_\vr}g(|Du|)\,dx,\qquad {\cal B}=\frac{1}{R^{\gamma}}\int_{B_R}g(|Du-Dv|)\,dx,\] $0<\gamma<\delta$ as above  to get
\[\int_{B_\vr}g(|Du|)\,dx\leq c\left(\frac{ \vr}{R}\right)^{\gamma}\left\{ \int_{B_R}g(|Du|)\,dx+  
\int_{B_R}g(|Du-Dv|)\,dx\right\}\quad\text{for }\ \vr\leq R\leq \bar{R}.\]
Therefore, by recalling $\gamma$ from~\eqref{gamma:delta} we end with
\[\vr^{\frac{\theta-q}{q }-n}\int_{B_\vr}g(|Du|)\,dx\leq cR^{ \frac{\theta-q}{q }-n}\left\{\|g(|Du|)\|_{L^1(B_R)}+  
\int_{B_R}g(|Du-Dv|)\,dx\right\}\]
and consequently, for every $\Omega_1\subset\subset\Omega_2 \subset\subset\Omega$, taking into account~\eqref{eq:cominMor}, we get~\eqref{eq:Morrey-1st-est}.
\end{proof}

\section{Proofs of main results}\label{sec:proofs}
This section is devoted to presentation of main proofs and then providing a compact discussion. 
\subsection{Proofs}\label{ssec:main-proofs}
\begin{proof}[Proof of Theorem~\ref{theo:Lorentz-est}] Consider $u_k$ solving~\eqref{eq:main-k-for-SOLA}. Note that within our range of parameter $q<n$. We apply Proposition~\ref{prop:maximal-est}. Recall $(t,\gamma)\in[1, i_G\chi/(s_G-1))\times(0,\infty]$ and take $ {t}=\frac{nq}{n-q} $, we get the above inequality with $q  \in \left(1,\frac{n i_G\chi}{ns_G-n+i_G\chi}\right)$. Therefore, due to Lemma~\ref{lem:Riesz} {\it i)} we get the following inequality 
\[\begin{split} \avenorm{ g( |Du_k|)}_{L\left(\frac{n q}{n-q},\gamma\right)(B_{R/4})}&\leq cg\left(\barint_{B_R}|Du_k|dx\right)+c \avenorm{\Mbm(\mu_k) }_{L\left(\frac{n q}{n-q},\gamma \right)(B)}\\
 &\leq cg\left(\barint_{B_R}|Du_k|dx\right)+c\avenorm{\mu 
 }_{L(q,s)(B)}
,\end{split}\] for all ${B_R\subset\subset\Omega}$. We can pass to the limit with $k\to\infty$ according to assumptions on SOLA (Definition~\ref{def:SOLA}). The  proof of~\eqref{eq:Lorentz-est} can be concluded by a standard covering argument.
\end{proof}

\begin{proof}[Proof of Corollary~\ref{coro:LLogL-est}] Consider $u_k$ solving~\eqref{eq:main-k-for-SOLA}.  Let $B\subset\subset\Omega$. Proposition~\ref{prop:maximal-est} applied with $t=\gamma=n/(n - 1)$ (note that $p\leq n$) and then Lemma~\ref{lem:Riesz} {\it ii)} give
\[\begin{split}\avenorm{ g(|Du_k|)}_{L^\frac{n }{n-1}(B_{R/4})} &\leq c\,g\left( \barint_{B_R }|Du_k|dx\right) +c  \avenorm{ \Mbm(\mu_k)}_{L^\frac{n }{n-1}(B_{R/2})}\\&\leq c\,g\left( \barint_{B_R }|Du_k|dx\right) +c  (n)|B_{R/2}|^\frac{1}{n}\avenorm{ \mu
}_{L \log L(B_{R/2})}
,\end{split}\]
where we can pass to the limit with $k\to\infty$ according to assumptions on SOLA (Definition~\ref{def:SOLA}). Then we obtain the local version of~\eqref{eq:LLogL-est}. Its final form can be obtained via standard covering argument.
\end{proof}

\begin{proof}[Proof of Theorem~\ref{theo:Morrey-est}] As in the previous proofs we shall consider $u_k$ solving~\eqref{eq:main-k-for-SOLA} and after getting the estimates pass to the limit with $k\to\infty$ according to assumptions on SOLA (Definition~\ref{def:SOLA}). We skip $k$ in notation.

 We provide first the estimates for a problem defined on a unit ball $B_1\subset\rn$ and then rescale it to obtain the final estimates. Let us consider $\wt{u}$ solving\begin{equation}
\label{eq:tilde}
 -\dv\wt{a}(D\wt{u})=\wt{\mu}\in L^1( B_1).
\end{equation}
Proposition~\ref{prop:maximal-est} with $t=\gamma=\theta q/(\theta-q)$ yields then
\[\avenorm{ g( |D\wt{u}| )}_{L^{\frac{\theta q}{\theta-q}}(B_{1/8})}\leq cg\left(\barint_{B_{1/2}}|D\wt{u}|dx\right)+c\avenorm{ M^*_{1;B_{1/2}}(\wt{\mu})}_{L^{\frac{\theta q}{\theta-q}}(B_{1/2})}.
\]
We estimate the right-hand side above using the Jensen inequality to get
\begin{equation}
\label{est:tilde}\begin{split}\avenorm{ g( |D\wt{u}| )}_{L^{\frac{\theta q}{\theta-q}}(B_{1/8})}&\leq c\barint_{B_{1/2}}g\left(|D\wt{u}| \right)dx+c\avenorm{ M^*_{1;B_{1/2}}(\wt{\mu})}_{L^{\frac{\theta q}{\theta-q},\theta}(B_{1/2})}\\
&\leq c[g\left(|D\wt{u}| \right)]_{L^{1,\frac{\theta-q}{q}}(B_{1})}+c\avenorm{\wt{\mu}}_{L^{q,\theta}(B_{1/2})},\end{split}
\end{equation} 
where the second line can be obtained due to Lemma~\ref{lem:Emb-inq} {\it i)} and Lemma~\ref{lem:Riesz} {\it iii)}.

Going back to the original solution $u$ we consider a ball $B_\vr=B(x_0,\vr)\subset\subset \Omega$ and rescale the problem. For $y\in B_1$ we put
\[\wt{u}(y) :=u(x_0 +\vr y)/\vr,\quad \wt{\mu}(y) := \vr \mu(x_0 +\vr y),\quad\text{and}\quad \wt{a}(y,z)=a( x_0 + \vr y, z).\] Notice that $\wt{u}$ solves~\eqref{eq:tilde} and we have the estimate~\eqref{est:tilde} for it. Using Remark~\ref{rem:Mor-scal} we infer the estimate for $u$
\begin{equation*}
  \avenorm{ g( |D {u}|)}_{L^{\frac{\theta q}{\theta-q}}(B_{\vr/8})} \leq c\left\{[g\left(|D {u}| \right)]_{L^{1,\frac{\theta-q}{q}}(B_{\vr})}+\| {\mu}\|_{L^{q,\theta}(B_{\vr})}\right\}\vr^\frac{\theta-q}{q},
\end{equation*} 
which by standard covering argument and then by Proposition~\ref{prop:grad-u-est} implies
\begin{equation*}
\| g( |D {u}|)\|_{L^{\frac{\theta q}{\theta-q},\theta}(\Omega_1)}  \leq c \|g(|D {u}|)\|_{L^{1}({\Omega_2})}+\| {\mu}\|_{L^{q,\theta}({\Omega_2})} 
\end{equation*} 
with for $\Omega_1\subset\subset\Omega_2\subset\subset\Omega$ and $c=c(n,G,\nu, L,q,\theta, {\rm dist}(\Omega_1 , \pa\Omega_2 ))$. 

In order to conclude we again use the same scaling argument. For $y\in B_1$ we put 
\[ \bar{u}(y) :=u(x_0 +Ry )/R,\quad \bar{\mu}(y) := R \mu(x_0 +R y),\quad\text{and}\quad \bar{a}(y,z)=a( x_0 + R y, z).\]
and we have
\[\| g( |D \bar{u}|)\|_{L^{\frac{\theta q}{\theta-q},\theta}(B_{3/4})}  \leq c \|g(|D \bar{u}|)\|_{L^{1}({B_1})}+\| \bar{\mu}\|_{L^{q,\theta}({B_1})}. \]
Consequently, again by Remark~\ref{rem:Mor-scal}, we have 
\[\| g( |D  {u}|)\|_{L^{\frac{\theta q}{\theta-q},\theta}(B_{3R/4})}  \leq c R^{\frac{\theta-q}{q}-n}\|g(|D  {u}|)\|_{L^{1}(B_{R})}+\|  {\mu}\|_{L^{q,\theta}({B_R})}. \]
Finally, the final estimate is a consequence of application of Remark~\ref{rem:Mor-scal}.
\end{proof}

\begin{proof}[Proof of Theorem~\ref{theo:Lor-Mor}] Starting as in the proof of Theorem~\ref{theo:Lorentz-est}, but with $ {t}=\frac{\theta q}{\theta-q}$ and $\gamma=\frac{\theta s}{\theta-q}$ and apply Lemma~\ref{lem:Riesz} {\it iv)} we get the following inequality 
\[\begin{split} \avenorm{ g( |Du_k|)}_{L\left(\frac{\theta  q}{\theta-q},\frac{\theta s}{\theta -q}\right)(B_{R/4})}&\leq cg\left(\barint_{B_R}|Du_k|dx\right)+c \avenorm{\Mbm(\mu_k) } _{L\left(\frac{\theta q}{\theta-q},\frac{ns}{n-q} \right)(B)}\\
 &\leq cg\left(\barint_{B_R}|Du_k|dx\right)+c\avenorm{\mu 
 }_{L^\theta(q,\gamma)(B)}
,\end{split}\] for all ${B_R\subset\subset\Omega}$. Then we use it instead of~\eqref{est:tilde} in the reasoning of the proof of Theorem~\ref{theo:Morrey-est} to get the claim.
\end{proof}

\subsection{Comments on the proofs}\label{ssec:comments}

Careful inspection of the proofs of Theorems~\ref{theo:Lorentz-est} ~\ref{theo:Morrey-est}, and~\ref{theo:Lor-Mor} indicates that we actually prove  the expected result in the range of admissible $q$ shall being an open set slightly bigger than in the statement. For brevity we formulate  the main claims  under closed-ended condition capturing the interesting end points. In fact, the proof of Lorentz estimates from Theorem~\ref{theo:Lorentz-est} works provided
\begin{equation*}
 1<q<\frac{n i_G {\chi}}{n s_G-n+i_G {\chi}} \qquad\text{and}\qquad
0<s\leq\infty
\end{equation*} 
with $ {\chi}$ being the higher integrability exponent $ {\chi}= {\chi}(n,i_G,s_G,\nu,L)>1$, whereas the proof of Lorentz estimates from Theorem~\ref{theo:Lorentz-est} under the corresponding restrictions~\eqref{wt-q-Morrey} broader.

\appendix
\addcontentsline{toc}{section}{Appendices}

\section{Appendix}

\subsection{Function  spaces}\label{ssec:fn-sp}
In this section we define and present basic properties of several function  spaces, which are taken into account in the paper. In every definition $\Omega\subset\rn$ is assumed to be an open subset. By the local versions of the spaces defined in this section, we mean naturally those where the norm is finite on arbitrary compact subset of $\Omega$.

\begin{defi}[Lorentz and Marcinkiewicz space]\label{def:Lor:sp}
Let $q,\gamma>0$. A measurable map $f : \Omega\to\r^k$, $k\in\n$ belongs to the Lorentz space $L(q,\gamma)(\Omega)$ if and only if
\[\|f\|_{L(q,\gamma)(\Omega)}= \left(q \int_0^\infty
(\lambda^q |\{x\in\Omega: |f (x)|>\lambda\}|)^{\gamma/q}
\frac{d\lambda}{\lambda}\right)^\frac{1}{\gamma}<\infty.\]
The Marcinkiewicz space ${\cal M}^q(\Omega)=
L(q,\infty)(\Omega)$ is defined setting 
\[\|f\|_{{\cal M}^q(\Omega)}=\left(\sup_{\lambda>0}\lambda^q|\{x\in\Omega: |f (x)|>\lambda\}|\right)^\frac{1}{q}.\]
\end{defi}
Let us point out that the Lorentz spaces are intermediate to the Lebesgue spaces in the following sense: for $0<q<t<r\leq\infty$ 
\[L^r=L(r,r)\subset L(t,q)\subset L(t,t)=L^t\subset L(t,r)\subset L(q,q)= L^q.\]
In particular,
\[L^p\subset {\cal M}^p\subset L^{p-\ve}\]
where the inclusions are proper and for the second one consider a function $|x|^{-n/p}$.

\medskip

We shall make use of the following averaged norms
\[\begin{split}\avenorm{f}_{L(q,\gamma)(\Omega)}&= \left(q \int_0^\infty
\left(\lambda^q\frac{ |\{x\in\Omega: |f (x)|>\lambda\}|}{|\Omega|}\right)^{\gamma/q}
\frac{d\lambda}{\lambda}\right)^\frac{1}{\gamma},\\
\avenorm{f}_{{\cal M}^q(\Omega)}&=\left(\sup_{\lambda>0}\lambda^q\frac{|\{x\in\Omega: |f (x)|>\lambda\}|}{|\Omega|}\right)^\frac{1}{q}.\end{split}\]

\begin{defi}[Morrey space]\label{def:Mor:sp} Let $q\geq 1$ and $\theta\in[0,n]$.  A measurable map $f : \Omega\to\r^k$, $k\in\n$ belongs to the Morrey space $L^{q,\theta}(\Omega)$ 
if and only if \[\|f\|_{L^{q,\theta}(\Omega)}:= \sup\limits_{\substack{
B(x_0,R) \subset\rn\\ x_0\in\Omega}} R^{\frac{\theta-n}{q}} \|f\|_{L^q(B_R\cap \Omega)} <\infty.\]
\end{defi}

Combining the integrability and density conditions we consider also the followig spaces.

\begin{defi}[Lorentz-Morrey and Marcinkiewicz-Morrey spaces]\label{def:LorMor:sp} Let $q\geq 1$ and $\theta\in[0,n]$. We say that $f$ belongs to the Lorentz-Morrey space $L^{\theta}(t,q)(\Omega)$
if and only if\[\|f\|_{L^{\theta}(t,q)(\Omega)}:=\sup\limits_{\substack{
B(x_0,R) \subset\rn\\ x_0\in\Omega}}  R^\frac{\theta-n}{t}\|f\|_{L(t,q)(B_R\cap \Omega)}<\infty,\]
and, accordingly, $f$ belongs to the Marcinkiewicz-Morrey space ${\cal M}^{t,\theta}(\Omega)=L^\theta(t,\infty)(\Omega)$ if and only if\[\|f\|_{{\cal M}^{t,\theta}(\Omega)}:=\sup\limits_{\substack{
B(x_0,R) \subset\rn\\ x_0\in\Omega}}  R^\frac{\theta-n}{t}\|f\|_{{\cal M}^{t }(B_R\cap \Omega)}<\infty.\]
\end{defi}
Obviously, we have
\[{\cal M}^{ q,\theta} \subset {\cal M}^q=  {\cal M}^{ q,0} \quad\text{for }\ q>1,\ \theta\in[0,n],\]
and moreover
\[L^{q,\theta}\subset{\cal M}^{q,\theta}\subset L^{t,\theta} \quad\text{for }\ 1\leq t<q,\ \theta\in[0,n].\]
To visualise how this scale is different than the classical Lebesgue setting let us mention that despite $L^{1,0}=L^\infty$, there exist functions from $L^{1,\theta}$ for $\theta$ arbitrarily close to zero, which   do not belong to $L^q$ for any $q > 1$.

It is sometimes more convenient to consider the localized and averaged norms
 \[\begin{split}[f]_{L^{q,\theta}(\Omega)}:= \sup_{
B_R \subset\Omega} R^{\frac{\theta-n}{q}} \|f\|_{L^q(B_R)},&\qquad [f]_{L^{\theta}(t,q)(\Omega)}:=\sup_{B_R \subset \Omega}  R^\frac{\theta-n}{t}\|f\|_{L(t,q)(B_R)},\\ [f]_{{\cal M}^{t,\theta}(\Omega)}&:=\sup_{
B_R \subset\Omega} R^\frac{\theta-n}{t}\|f\|_{{\cal M}^{t }(B_R)},\end{split}\]
\[\begin{split}
\avenorm{f}_{L^{q,\theta}(\Omega)}&:= \sup\limits_{\substack{
B(x_0,R) \subset\rn\\ x_0\in\Omega}} R^{\frac{\theta-n}{q}} \avenorm{f}_{L^q(B_R\cap \Omega)},\\
\avenorm{f}_{L^{\theta}(t,q)(\Omega)}&:=\sup\limits_{\substack{
B(x_0,R) \subset\rn\\ x_0\in\Omega}}  R^\frac{\theta-n}{t}\avenorm{f}_{L(t,q)(B_R\cap \Omega)},\\
\avenorm{f}_{{\cal M}^{t,\theta}(\Omega)}&:=\sup\limits_{\substack{
B(x_0,R) \subset\rn\\ x_0\in\Omega}}  R^\frac{\theta-n}{t}\avenorm{f}_{{\cal M}^{t }(B_R\cap \Omega)}.\end{split}\]

\begin{rem}\label{rem:Mor-scal} Let us consider $f\in L^{q,\theta} (B)$ with $B=B(x_0,R)$ and $\wt{f}(y) := f (x_0 + Ry)$ for $y$ from the unit ball $B_1$, it follows
\[[\wt{f}]_{ L^{q,\theta} (B_1)} = R^{-\theta/q} [f ]_{ L^{q,\theta} (B)}\qquad\text{and}\qquad \|\wt{f}\|_{ L^{q,\theta} (B_1)} = R^{-\theta/q} \|f \|_{ L^{q,\theta} (B)}.\]
\end{rem}
\begin{rem} Let $f\in L^{ q,\theta}(B_R )$ be a map, $q\geq 1$ and $\theta\in[0, n]$, then\[ \|f \|_{L^{q,\theta}(B_{R/2})}\leq 6^\frac{n-\theta}{q}[f]_{L^{q,\theta}(B_{3R/4})}.\]
\end{rem}
\begin{proof} Consider $B_\vr=B(y,\vr)$ such that $B(y,\vr)\cap B_{R/2}\neq \emptyset$. If $B_\vr\subset B_{R/2}$, then
\[\vr^{\theta-n}\int_{B_\vr \cap B_{R/2}} |f|^qdx\leq \vr^{\theta-n}\int_{B_\vr  } |f|^qdx\leq [f]^q_{L^{q,\theta}(B_{3R/4})}.\]
Otherwise, if $B_\vr \not\subset B_{ 3R/4}$, then $\vr\geq R/8$  and we can estimate
\[\vr^{\theta-n}\int_{B_\vr \cap B_{R/2}} |f|^qdx\leq \left(\frac{R}{8}\right)^{\theta-n}\int_{B_{3R/4}} |f|^qdx\leq 6^{n-\theta}[f]^q _{L^{q,\theta}(B_{3R/4})}.\]\end{proof} 

We shall consider data in the Orlicz space $L\log L$, where the modular function $t\mapsto t\log(e+t)$ satisfies $\Delta_2$-condition, but is growing essentially less rapidly than $t^{1+\ve}$ for any $\ve>0$. 

\begin{defi}[$L\log L$-spaces]\label{def:LLogL:sp} Let $\Omega\subset\rn$  be an open subset of finite
measure  and $k\geq 1$. We define the space $L\log  L(\Omega)$ as a subset of integrable functions $f:\Omega\to\r^k$ such that
\[\int_\Omega |f | \log (e + |f |) dx <\infty,\]
endowed with a norm

\[\begin{split}\|f\|_{L \log  L(\Omega)}&=\inf\left\{\lambda>0:\quad\int_\Omega\left|\frac{f}{\lambda}\right|\log\left(e +\left|\frac{f}{\lambda}\right| \right)dx\leq 1\right\}\\
&\simeq \int_\Omega\left| {f} \right|\log\left(e + \frac{|f|}{\barint_{\Omega}|f(y)|dy}  \right)dx<\infty.\end{split}\]
Moreover, we define  $L \log  L^\theta(\Omega)$ as a subset of integrable functions $f:\Omega\to\r^k$ such that
\[\begin{split}\|f\|_{L \log  L^\theta(\Omega)}&=\sup\limits_{\substack{
B(x_0,R) \subset\rn\\ x_0\in\Omega}}  R^{\theta} \|f\|_{L\log L(B_R\cap \Omega)}\\
&\simeq\sup\limits_{\substack{
B(x_0,R) \subset\rn\\ x_0\in\Omega}}  R^{\theta-n}\int_{B_R}\left| {f} \right|\log\left(e + \frac{|f|}{\barint_{B_R}|f(y)|dy}  \right)dx<\infty.\end{split}\]
\end{defi}

 \subsection{Basics}

The classical reference for this section is~\cite{adams-hedberg}, most of the needed estimates can be found in~\cite{min-grad-est}. 
 
 Let us present basic embedding inequalities.
 
 \begin{lem}\label{lem:Emb-inq}
 Let $B\subset \rn$ be a ball, $n\geq 1$, and  $f:\rn\to\R$  is a locally integrable function supported in $B$. If $1<q<\theta\leq n$,  then there exists $c>0$ such that for every it holds that
\[\avenorm{ f}_{L^1(B)}\leq c \|f\|_{L^{1,\frac{\theta-q}{q}}(B)}\leq c [f]_{L^{1,\frac{\theta-q}{q}}(2B)}.\]
 \end{lem}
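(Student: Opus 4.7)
The statement splits into two independent inequalities, each of which I would handle by direct inspection of the definition of $\|\cdot\|_{L^{q,\theta}}$ and $[\cdot]_{L^{q,\theta}}$. Let me write $\theta':=(\theta-q)/q$, and recall $0<\theta'<n$ since $1<q<\theta\leq n$.

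For the first inequality my plan is simply to test the Morrey supremum at $B$ itself. Taking $x_0$ to be the center and $R$ the radius of $B$ in the sup defining $\|f\|_{L^{1,\theta'}(B)}$ gives $R^{\theta'-n}\|f\|_{L^1(B)}\le \|f\|_{L^{1,\theta'}(B)}$. Rearranging,
\[
\avenorm{f}_{L^1(B)}=\frac{\|f\|_{L^1(B)}}{|B|}\le c_n R^{-\theta'}\|f\|_{L^{1,\theta'}(B)},
\]
and the $R$-factor may be absorbed into the constant $c$ once the radius of $B$ is fixed (as is the case in the applications, where $B$ is a dyadic scaled ball of comparable size).

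For the second inequality I would fix an admissible pair $(y,r)$ with $y\in B$ in the supremum defining $\|f\|_{L^{1,\theta'}(B)}$ and distinguish two regimes based on how $r$ compares with the radius $R$ of $B$:
\textit{(a)} If $r\le R$, then $y\in B$ yields $B_r(y)\subset B(x_B,R+r)\subset 2B$, so $B_r(y)$ is itself admissible in the sup defining $[f]_{L^{1,\theta'}(2B)}$, and the support of $f$ lets me replace $B_r(y)\cap B$ with $B_r(y)$ free of charge.
\textit{(b)} If $r>R$, then $\theta'-n<0$ gives $r^{\theta'-n}\le R^{\theta'-n}$, while $\mathrm{supp}\,f\subset B$ gives $\|f\|_{L^1(B_r(y)\cap B)}\le \|f\|_{L^1(B)}$; the ball $B$ is itself contained in $2B$, so testing $[f]_{L^{1,\theta'}(2B)}$ against $B$ yields $R^{\theta'-n}\|f\|_{L^1(B)}\le [f]_{L^{1,\theta'}(2B)}$.
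Taking supremum over $(y,r)$ combines the two cases into the desired bound with an absolute constant.

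There is no genuine obstacle; the two steps are elementary consequences of the definitions. The only point that requires a modest amount of care is the bookkeeping in case \textit{(b)} of the second inequality, where one has to use the support hypothesis on $f$ together with the fact that $\theta'<n$ to trade a small ball outside $2B$ for the admissible test ball $B\subset 2B$. The support hypothesis $\mathrm{supp}\,f\subset B$ is essential precisely here, since otherwise the Morrey seminorm on the left could see mass of $f$ in $B_r(y)\setminus B$ not controlled by the right-hand side.
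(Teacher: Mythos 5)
The paper does not prove this lemma at all: it is stated in the Appendix among the ``basic'' estimates, with the proof implicitly delegated to the classical references (\cite{adams-hedberg}, \cite{min-grad-est}). So there is no argument of the paper to compare against; your direct two-step verification from the definitions is correct and is essentially the argument one would write. Two remarks. First, your caveat about the radius factor in the first inequality is exactly right and worth stating: testing the supremum at $B$ itself only gives $\avenorm{f}_{L^1(B)}\leq c\,R^{-\frac{\theta-q}{q}}\|f\|_{L^{1,\frac{\theta-q}{q}}(B)}$, and a simple scaling of $f$ and $B$ shows the inequality cannot hold with a constant independent of the radius of $B$; this is consistent with how the paper uses the lemma, namely only at unit scale ($B=B_{1/2}$, $2B=B_1$) in the proof of Theorem~\ref{theo:Morrey-est}, so the constant there is harmless. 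Second, a small inaccuracy in your closing comment: the support hypothesis ${\rm supp}\,f\subset B$ is not what rescues case \textit{(b)} of the second inequality. In the paper's Definition~\ref{def:Mor:sp} the inner norm in $\|f\|_{L^{1,\frac{\theta-q}{q}}(B)}$ is taken over $B_r(y)\cap B$, so mass of $f$ outside $B$ never enters the left-hand side regardless of the support of $f$; your invocation of the support is harmless but the claim that it is ``essential'' there is not correct (and in fact your argument gives the second inequality with constant $c=1$). The support assumption matters only for the way the lemma is combined with the maximal-operator estimates elsewhere, not for the two inequalities themselves.
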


\begin{lem}[Lemma~6,~\cite{min-grad-est}]\label{lem:LqinMarc}
Suppose $A\subset\rn$ is measurable. If $h\in{\cal M}^t(A)$, then $h\in L^q(A)$ for every $q\in[1,t)$ and
\[\|h\|_{L^q(A)}\leq\left(\frac{t}{t-q}\right)^\frac{1}{q}|A|^{\frac{1}{q}-\frac{1}{t}}\|h\|_{{\cal M}^t(A)}.\]
\end{lem}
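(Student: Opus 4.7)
The plan is to start from the layer-cake (distribution function) representation of the $L^q$-norm and split the integration at a level $\lambda_0$ to be optimized. Writing $M:=\|h\|_{{\cal M}^t(A)}$ and $\mu_h(\lambda):=|\{x\in A:|h(x)|>\lambda\}|$, I would use the two elementary bounds that are available:
\[\mu_h(\lambda)\leq |A|\quad\text{(trivial)}\qquad\text{and}\qquad \mu_h(\lambda)\leq \frac{M^t}{\lambda^t}\quad\text{(Marcinkiewicz/Chebyshev)}.\]
The first one is sharp for small $\lambda$, the second one for large $\lambda$, so the idea is to use the trivial bound on $(0,\lambda_0)$ and the Marcinkiewicz bound on $(\lambda_0,\infty)$.

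Concretely, I would write
\[\|h\|_{L^q(A)}^q = q\int_0^\infty \lambda^{q-1}\mu_h(\lambda)\,d\lambda \leq q|A|\int_0^{\lambda_0}\lambda^{q-1}\,d\lambda + qM^t\int_{\lambda_0}^\infty \lambda^{q-t-1}\,d\lambda,\]
which, since $q<t$, evaluates to
\[\|h\|_{L^q(A)}^q\leq |A|\lambda_0^q+\frac{q}{t-q}M^t\lambda_0^{q-t}.\]

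The next step is to minimize the right-hand side over $\lambda_0>0$. Differentiating and setting the derivative to zero gives $|A|\lambda_0^t=M^t$, i.e. $\lambda_0=M|A|^{-1/t}$. Substituting back, both terms become $M^q|A|^{1-q/t}$ times harmless constants that add up to $1+\frac{q}{t-q}=\frac{t}{t-q}$, so
\[\|h\|_{L^q(A)}^q\leq \frac{t}{t-q}\,M^q |A|^{1-q/t},\]
and taking $q$-th roots yields exactly the asserted inequality.

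There is essentially no obstacle here: the proof is a one-paragraph calculation, the only thing to be careful about is that the integral $\int_{\lambda_0}^\infty\lambda^{q-t-1}\,d\lambda$ converges (which needs $q<t$, an assumption of the lemma) and that the optimization in $\lambda_0$ is performed exactly, so that the constant $(t/(t-q))^{1/q}$ comes out with the stated sharp form rather than a weaker universal constant. Since $h$ is only assumed to be in ${\cal M}^t(A)$ and not in $L^t(A)$, I would make sure not to invoke any integral quantity of $h$ beyond its distribution function, and note that the result also applies (with the same proof) when $A$ has infinite measure provided $|A|<\infty$ is assumed, which is the hypothesis used in the statement.
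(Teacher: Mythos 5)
Your proof is correct and is the standard layer-cake argument with an optimized splitting level; the paper itself states this lemma without proof, simply citing Mingione's Lemma~6, and your computation reproduces exactly the argument given there, including the sharp constant $\bigl(\tfrac{t}{t-q}\bigr)^{1/q}$. The only blemish is the final remark about ``$A$ has infinite measure provided $|A|<\infty$ is assumed,'' which is self-contradictory as written --- the estimate is only meaningful for $|A|<\infty$, since the factor $|A|^{\frac1q-\frac1t}$ has a positive exponent --- but this does not affect the validity of the proof.
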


We use the following estimates on maximal operators.

\begin{lem}\label{lem:Riesz} 
 Let $B\subset \rn$ be a ball, $n\geq 2$, and  $\mu:\rn\to\R$  is a locally integrable function supported in $B$. See~\eqref{MDu-Mmu-def} for notation.
 \begin{itemize}
 \item[i)]
If $1<q<n$ and $s \in(0,\infty]$, then there exists $c = c(n,q,s)$ such that for every it holds that \[\| \Mbm(\mu)\|_{L\left(\frac{n q}{n-q},s\right)(B)}\leq\ c \|\mu \|_{L(q,s)(B)}.\] 
 \item[ii)]
There exists $c = c(n)$ such that for every it holds that
 \[\| M_{1,B}^*(\mu)\|_{L^\frac{n}{n-1}(B)}\leq c(n)|B|^\frac{1}{n} \| \mu\|_{L \log L(B)}.\]
 \item[iii)] If $1<q<\theta\leq n$,  then there exists $c >0$ such that for every it holds that
\[\| M_{1;B}^*(\mu)\|_{L^{\frac{\theta q}{\theta-q},\theta}(B)}\leq c \|\mu\|_{L^{q,\theta}(B)}.\]
 \item[iv)] If $1<q<\theta\leq n$ and $s\in(0,\infty)$,  then there exists $c >0$ such that for every it holds that
\[\|M_{1;B}^*(\mu)\|_{L\left(\frac{\theta q}{\theta-q},\frac{\theta s}{\theta-q}\right)(B)}\leq c \|\mu\|_{L^{\theta}(q,s)(B)}.\]
\end{itemize} 
\end{lem}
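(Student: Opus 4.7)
All four estimates will be deduced from the pointwise comparison
\[
M^{*}_{1;B}(\mu)(x)\ \le\ c(n)\,I_{1}\bigl(|\mu|\mathds{1}_{B}\bigr)(x),\qquad x\in B,\qquad I_{1}\nu(x):=\int_{\rn}\frac{d\nu(y)}{|x-y|^{n-1}}.
\]
Indeed, if $x\in B_{R}$ with $B_{R}\subset B$, then $B_{R}\subset B(x,2R)$ and $|y-x|^{1-n}\ge (2R)^{1-n}$ on $B_R$; since the weight $|B_R|^{1/n-1}\simeq R^{1-n}$ enters the definition of $M^{*}_{1;B}$, one gets
\[
R^{1-n}\,|\mu|(B_{R})\ \le\ R^{1-n}\,|\mu|\bigl(B(x,2R)\bigr)\ \le\ c(n)\int_{B(x,2R)}|y-x|^{1-n}\,d|\mu|(y)\ \le\ c(n)\,I_{1}(|\mu|\mathds{1}_{B})(x),
\]
and taking the supremum over admissible $B_{R}$ yields the pointwise bound. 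With this in hand, every assertion reduces to a known mapping property of $I_{1}$ applied to $|\mu|\mathds{1}_{B}$.

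\textbf{Four parts.} For \emph{(i)} I would invoke O'Neil's Lorentz-space boundedness of the Riesz potential: $I_{1}:L(q,s)(\rn)\to L\bigl(nq/(n-q),s\bigr)(\rn)$ is continuous for $1<q<n$, $s\in(0,\infty]$, and the Lorentz quasi-norm of $|\mu|\mathds{1}_{B}$ on $\rn$ equals $\|\mu\|_{L(q,s)(B)}$. For \emph{(ii)} the ingredient is Stein's borderline estimate, $\|I_{1}\nu\|_{L^{n/(n-1)}(B)}\le c(n)|B|^{1/n}\|\nu\|_{L\log L(B)}$, where the factor $|B|^{1/n}$ is dictated by the homogeneity of $I_{1}$ under dilations. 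For \emph{(iii)} I would use Adams' fractional-integration theorem on Morrey spaces, $I_{1}:L^{q,\theta}(\rn)\to L^{\theta q/(\theta-q),\theta}(\rn)$ for $1<q<\theta\le n$, and for \emph{(iv)} the Lorentz--Morrey refinement due to Adams and Lewis, which combines the Morrey gain of integrability with O'Neil's Lorentz interpolation to give $I_{1}:L^{\theta}(q,s)(\rn)\to L^{\theta}\bigl(\theta q/(\theta-q),\theta s/(\theta-q)\bigr)(\rn)$.

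\textbf{Main obstacle.} The genuine analytic input is imported from the classical references above; the only technical point requiring care is the passage from the $\rn$-wide statements of O'Neil, Stein and Adams--Lewis to the localized quantities appearing in the lemma. On the right-hand side one must observe that extending $\mu$ by zero outside $B$ does not enlarge the relevant norm: for the Lorentz scale this is immediate from the distribution-function definition, while for $L^{q,\theta}$ and $L^{\theta}(q,s)$ a short covering argument using balls of radius comparable to $\mathrm{diam}(B)$ shows $\|\nu\mathds{1}_{B}\|_{L^{q,\theta}(\rn)}\le c(n)\|\nu\|_{L^{q,\theta}(B)}$ and similarly for the Lorentz--Morrey scale. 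On the left, restricting $I_{1}(|\mu|\mathds{1}_{B})$ to $B$ only decreases the norm. I expect part \emph{(iv)} to be the most delicate of the four, since it requires simultaneously exploiting the Morrey decay and the Lorentz-scale refinement; however, the reduction to the Adams--Lewis estimate leaves no additional work beyond the pointwise comparison and the localisation remarks above.
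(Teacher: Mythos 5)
Your reduction of all four parts to the pointwise bound $M^{*}_{1;B}(\mu)\le c(n)\,I_{1}(|\mu|\mathds{1}_{B})$ followed by the classical mapping theorems of O'Neil, Stein, Adams and Adams--Lewis is correct and is exactly the route the paper relies on: the lemma is stated in the appendix without proof, with a pointer to the references where precisely this Riesz-potential argument is carried out. The only detail worth re-checking is the normalization behind the factor $|B|^{1/n}$ in part \emph{(ii)} (it comes from which of the two equivalent $L\log L$ expressions one dilates, rather than from the homogeneity of $I_1$ alone), but this does not affect the structure of your argument.
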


We shall use two different classical absorption lemmas, both to be find in~\cite{Giusti}.
\begin{lem}[\cite{Giusti}, Lemma~6.1]
\label{lem:absorb1}
Let $\phi:[R/2,3R/4]\to[0,\infty)$ be a function such that
\[\phi(r_1)\leq \frac{1}{2}\phi(r_2)+\mathcal{A}+\frac{\mathcal{B}}{(r_2-r_1)^\beta}\qquad\text{for every}\qquad R/2\leq r_1<r_2\leq 3{R}/4\]
with $\mathcal{A,B}\geq 0$ and $\beta>0$. Then there exists $c=c(\beta)$, such that\[\phi(R/2)\leq c \left(\mathcal{A}+\frac{\mathcal{B}}{R^\beta}\right).\]
\end{lem}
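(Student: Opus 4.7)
The plan is a classical iteration along a geometric sequence of radii inside $[R/2, 3R/4]$. Fix a parameter $\tau \in (0,1)$ to be chosen later, and set $r_0 = R/2$ together with $r_i = R/2 + (R/4)(1-\tau^i)$ for $i \geq 1$. Then $(r_i)$ is strictly increasing with $r_i \nearrow 3R/4$, and $r_{i+1} - r_i = (R/4)(1-\tau)\tau^i$. Applying the hypothesis to each consecutive pair $(r_i, r_{i+1})$ yields
\[
\phi(r_i) \leq \tfrac{1}{2}\phi(r_{i+1}) + \mathcal{A} + \frac{4^\beta (1-\tau)^{-\beta}\,\mathcal{B}}{R^\beta \tau^{i\beta}}.
\]

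Iterating this inequality $k$ times gives
\[
\phi(R/2) \leq 2^{-k}\phi(r_k) + \mathcal{A}\sum_{i=0}^{k-1} 2^{-i} + \frac{4^\beta (1-\tau)^{-\beta}\,\mathcal{B}}{R^\beta}\sum_{i=0}^{k-1} (2\tau^\beta)^{-i}.
\]
The geometric series in the last term forces the requirement $2\tau^\beta > 1$, i.e.\ $\tau \in (2^{-1/\beta}, 1)$; once $\tau$ is so chosen, that sum is bounded by a constant depending only on $\beta$. Letting $k \to \infty$, and using that $\phi$ is bounded on $[R/2, 3R/4]$ so that $2^{-k}\phi(r_k) \to 0$, one arrives at the stated estimate with $c = c(\beta)$.

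The principal subtlety is that the blow-up factor $(r_2-r_1)^{-\beta}$ becomes worse as the two radii are brought closer, so a naive choice such as $\tau = 1/2$ only permits iteration for $\beta < 1$. The balancing condition $\tau^\beta > 1/2$ (forcing $\tau$ closer to $1$ as $\beta$ grows) is what enables the geometric decay from the factor $1/2$ in the hypothesis to dominate the singularity for \emph{every} $\beta > 0$. A minor but necessary point is the (local) boundedness of $\phi$, which is implicit in Giusti's framework and automatic in the applications of this paper, where the relevant $\phi$ is an integral of a fixed integrable quantity over subsets of a bounded region.
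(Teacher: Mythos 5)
Your proof is correct and is exactly the standard geometric-radius iteration behind Giusti's Lemma~6.1, which the paper cites rather than reproves: the choice of the sequence $r_i = R/2 + (R/4)(1-\tau^i)$ and the balancing condition $2\tau^\beta>1$, i.e.\ $\tau\in(2^{-1/\beta},1)$, are precisely the ingredients of that argument. Your remark on the boundedness of $\phi$ is apt — it is an explicit hypothesis in Giusti's formulation and holds in every application made in this paper — so nothing further is needed.
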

\begin{lem}[\cite{Giusti}, Lemma~7.3]
\label{lem:absorb2}
Let $\phi:[0,\bar{R}]\to[0,\infty)$ be a non-decreasing function such that
\[\phi(\vr)\leq c_0\left(\frac{\vr}{R}\right)^{\delta }\phi(R)+ {\mathcal{B}}R^{\gamma}\quad\text{for every }\vr\leq R\leq \bar{R},\]
with some $0<\gamma<\delta $ and ${\mathcal{B}}>0$. Then there exists $c=c(c_0,\gamma)$, such that\[\phi(\vr)\leq c\left\{\left(\frac{\vr}{R}\right)^{\gamma}\phi(R)+  {\mathcal{B}}\vr^{\gamma}\right\}\quad\text{for every }\vr\leq R\leq \bar{R}.\]
\end{lem}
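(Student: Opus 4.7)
The plan is the classical dyadic iteration argument for this type of Campanato-style lemma. I would begin by choosing a discretization parameter $\tau\in(0,1)$, small enough so that $c_0\tau^{\delta}\leq \tfrac12\tau^{\gamma}$; this is possible precisely because $\gamma<\delta$, and the choice of $\tau$ depends only on $c_0$, $\gamma$, and $\delta$. Applying the hypothesis with $\vr=\tau R$ then yields
\[
\phi(\tau R)\leq \tfrac12\tau^{\gamma}\phi(R)+\mathcal{B}R^{\gamma}.
\]

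Next I would iterate this inequality along the geometric sequence $R_k=\tau^k R$. Replacing $R$ by $\tau^k R$ in the displayed estimate above gives the recursion
\[
\phi(\tau^{k+1}R)\leq \tfrac12\tau^{\gamma}\phi(\tau^k R)+\mathcal{B}(\tau^k R)^{\gamma}.
\]
A straightforward induction on $k$ shows
\[
\phi(\tau^k R)\leq \tau^{k\gamma}\phi(R)+C\mathcal{B}(\tau^k R)^{\gamma}
\]
with $C=2\tau^{-\gamma}$; the inductive step requires only that $1+C\tau^{\gamma}/2\leq C$, which holds for this choice of $C$. Here both the absorption of the $\phi$-term (using $\tfrac12\leq 1$) and the stabilization of the $\mathcal{B}$-term (as a finite geometric-series bound) are the key points.

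Finally, for arbitrary $\vr\leq R$, I would pick the unique $k\in\mathbb{N}$ with $\tau^{k+1}R\leq \vr\leq \tau^k R$ and use monotonicity of $\phi$: $\phi(\vr)\leq \phi(\tau^k R)$. Rewriting $\tau^{k\gamma}=(\tau^k R/R)^{\gamma}\leq \tau^{-\gamma}(\vr/R)^{\gamma}$ and $(\tau^k R)^{\gamma}\leq \tau^{-\gamma}\vr^{\gamma}$ converts the discrete estimate into the continuous one
\[
\phi(\vr)\leq c\Bigl\{(\vr/R)^{\gamma}\phi(R)+\mathcal{B}\vr^{\gamma}\Bigr\},
\]
with $c=c(c_0,\gamma,\delta)$, which is the claim.

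There is no real obstacle here beyond bookkeeping; the only subtle point is getting the inductive constant $C$ right so that the $\mathcal{B}$-term on the right at level $k+1$ absorbs both the contribution $\tfrac12\tau^{\gamma}\cdot C(\tau^k R)^{\gamma}$ coming from the previous step and the new additive $\mathcal{B}(\tau^k R)^{\gamma}$. Choosing $C=2\tau^{-\gamma}$ makes this work, and the monotonicity hypothesis is used exactly once, in the final passage from the discrete sequence $\tau^k R$ to general $\vr$.
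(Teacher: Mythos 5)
Your proof is correct and is exactly the standard dyadic-iteration argument that the paper's cited source (Giusti, Lemma~7.3) uses: fix $\tau$ with $c_0\tau^{\delta}\leq\frac12\tau^{\gamma}$, iterate along $\tau^kR$, and interpolate to general $\vr$ using monotonicity. One trivial slip: the inductive step actually requires $1+C\tau^{\gamma}/2\leq C\tau^{\gamma}$ (not $\leq C$), but your choice $C=2\tau^{-\gamma}$ satisfies this with equality, so the argument goes through; note also that $c$ inevitably depends on $\delta$ through $\tau$, as you correctly record.
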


For the following inequality we refer to~\cite{Baroni-Riesz} or \cite{CGZG} without the growth restrictions, where the constant can be moved out of the modular function under $\Delta_2$-condition. 
\begin{lem}[The Sobolev inequality] \label{lem:Sob}
Let $h\in C^1(0,\infty)$ be an increasing and convex function satisfying~\eqref{ig-sg}. Assume further that $B_R\subset\rn$ is a ball.  Then there exists a constant $c=c(n,s_h)$, such that
\[\barint_{B_R} h^{\frac{n}{n-1}}(|u|)dx\leq c \left(\barint_{B_R} h(|Du|)dx\right)^{\frac{n}{n-1}}\]
for every weakly differentiable function $u\in W_0^{1,h}(B_R )$.
\end{lem}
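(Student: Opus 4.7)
The plan is to lift the classical Gagliardo–Nirenberg–Sobolev inequality to the Orlicz setting via a pointwise chain-rule together with a Young-inequality decoupling. Since $h\in C^{1}(0,\infty)$ is convex and increasing and $u\in W_{0}^{1,h}(B_{R})$, the composition $v:=h(|u|)$ belongs to $W_{0}^{1,1}(B_{R})$ with $|Dv|\leq h'(|u|)\,|Du|$ almost everywhere. First I would apply the classical $L^{1}$-Sobolev inequality to $v$, which yields
\[
\Bigl(\int_{B_{R}} h(|u|)^{n/(n-1)}\,dx\Bigr)^{(n-1)/n}\ \leq\ c(n)\int_{B_{R}} h'(|u|)\,|Du|\,dx.
\]

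Next I would decouple $h'(|u|)$ from $|Du|$ through Young's inequality with the complementary pair $(h,\widetilde h)$, giving $h'(|u|)\,|Du|\leq h(|Du|)+\widetilde h\bigl(h'(|u|)\bigr)$. Since $h$ is convex, the supremum defining $\widetilde h(h'(t))$ is attained at $s=t$, so $\widetilde h(h'(t))=t\,h'(t)-h(t)$, and the growth assumption $t\,h'(t)\leq s_{h}\,h(t)$ from~\eqref{ig-sg} gives $\widetilde h(h'(|u|))\leq (s_{h}-1)\,h(|u|)$. Combining these bounds yields
\[
\Bigl(\int_{B_{R}} h(|u|)^{n/(n-1)}\,dx\Bigr)^{(n-1)/n}\ \leq\ c(n,s_{h})\Bigl(\int_{B_{R}} h(|Du|)\,dx+\int_{B_{R}} h(|u|)\,dx\Bigr).
\]

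The remaining task is to absorb $\int h(|u|)$ into $\int h(|Du|)$ using the fact that $u\in W_{0}^{1,h}(B_{R})$, via an Orlicz--Poincar\'e bound. The cleanest route is to rescale to the unit ball, setting $\tilde u(y):=u(x_{0}+Ry)/R$ on $B_{1}$, to prove the inequality there where Poincar\'e-type constants depend only on $n$ and $s_{h}$, and then undo the scaling. The two-sided control $h(ct)/h(t)\in[c^{i_{h}},c^{s_{h}}]$ for $c\geq 1$ coming from~\eqref{B-power-compar} ensures that $\Delta_{2}$-type rescaling factors stay under control. After returning to $B_{R}$, rewriting the integrals as $\int_{B_{R}}=|B_{R}|\barint_{B_{R}}$ and raising to the $n/(n-1)$-power produces the averaged form claimed in the statement.

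The main obstacle is the last step: the $L^{1}$ Gagliardo--Nirenberg--Sobolev bound is not homogeneous in the natural scaling of the averaged inequality, so one must verify that the combination of the Orlicz--Poincar\'e absorption and the conversion between $\int_{B_{R}}$ and $\barint_{B_{R}}$ does not introduce a spurious $R$-dependence in the constant. This is precisely where~\eqref{B-power-compar} and the two-sided growth condition $1\leq i_{h}\leq s_{h}<\infty$ are essential, since they pin $h(Rt)/h(t)$ and $h(t/R)/h(t)$ to power-type factors that can be balanced against the $|B_{R}|$-weights appearing after averaging, leaving a constant that depends only on $n$ and $s_{h}$.
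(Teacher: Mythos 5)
The paper does not actually prove this lemma --- it is quoted from \cite{Baroni-Riesz} and \cite{CGZG} --- so there is no internal proof to compare against; your strategy (set $v=h(|u|)$, apply the classical $L^1$ Gagliardo--Nirenberg inequality, then decouple $h'(|u|)\,|Du|$ via Young's inequality and $\wt{h}(h'(t))=th'(t)-h(t)\le (s_h-1)h(t)$) is indeed the standard route in those references. One small caveat before the main point: after the plain Young inequality the coefficient of $\int h(|u|)$ is $c(n)(s_h-1)$, which is not small, so it cannot literally be ``absorbed''; you need either the weighted form $h'(|u|)|Du|\le \delta\,\wt{h}(h'(|u|))+h(|Du|/\delta)$ with $\delta=\delta(n,s_h)$ chosen small on the unit ball, or, as you suggest, a genuine Orlicz--Poincar\'e bound $\int_{B_1}h(|\tilde u|)\,dx\le c(n,s_h)\int_{B_1}h(|D\tilde u|)\,dx$. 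Either closes the argument on $B_1$.

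The serious gap is the final rescaling step. Setting $\tilde u(y)=u(x_0+Ry)/R$ turns $h(|Du|)$ into $h(|D\tilde u|)$ but turns $h(|u|)$ into $h(R|\tilde u|)$, so what your argument proves after transporting back to $B_R$ is the scale-invariant estimate $\barint_{B_R}h^{n/(n-1)}(|u|/R)\,dx\le c\,\big(\barint_{B_R}h(|Du|)\,dx\big)^{n/(n-1)}$, not the stated one. The stated inequality, with $h(|u|)$ on the left and $c=c(n,s_h)$, is in fact false for large $R$: take $u(x)=R\phi(x/R)$ with a fixed $\phi\in C_c^\infty(B_1)$; the right-hand side is independent of $R$, while the left-hand side equals $\barint_{B_1}h^{n/(n-1)}(R|\phi|)\,dy\ge R^{\,i_h n/(n-1)}\barint_{B_1}h^{n/(n-1)}(|\phi|)\,dy\to\infty$ by \eqref{B-power-compar}. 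So the ``balancing against the $|B_R|$-weights'' you invoke at the end cannot work: the averaging factors cancel exactly between the two sides (that is precisely what makes the $|u|/R$ version scale-invariant), and the residual factor $h(Rt)/h(t)\in[R^{i_h},R^{s_h}]$ has nothing left to cancel against. The lemma as printed is missing the $1/R$ inside $h$ --- note that where it is applied, in the proof of Proposition~\ref{prop:homo-problem}\,\textit{(iv)}, the argument is $|v-(v)_{B_\vr}|/\vr$ --- and for that corrected, scale-invariant statement your outline (run the argument on $B_1$, then rescale) is correct.
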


\begin{lem}[cf.~\cite{adams-fournier}]\label{lem:G*} Suppose $h$ is an increasing and convex function and $\wt{h}$ is its Young conjugate, then there exists a constant $c$, such that for every $t>0$ we have
\[\wt{h}\left(h(t)/t\right)\leq ch(t).\]
\end{lem}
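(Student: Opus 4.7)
The plan is to use the very definition of the Young conjugate together with monotonicity of the ratio $h(r)/r$, obtaining in fact the clean bound with $c=1$.

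First I would write out the definition $\wt{h}(s) = \sup_{r>0}(rs - h(r))$ and substitute $s = h(t)/t$ to get
\[
\wt{h}\!\left(\frac{h(t)}{t}\right) = \sup_{r>0}\Big(r\,\frac{h(t)}{t} - h(r)\Big).
\]
Next I would use the standard consequence of convexity of $h$ with $h(0)=0$ (the normalization implicit in calling $h$ a Young-type function in the paper): the difference quotient $r\mapsto h(r)/r$ is non-decreasing. This is where all the work is.

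Then I would split the supremum into two ranges. For $r\geq t$, monotonicity gives $h(r)/r \geq h(t)/t$, so $r\,h(t)/t - h(r) \leq 0$, and those values of $r$ contribute nothing. For $r\leq t$, I would simply drop the non-positive term $-h(r)$ and estimate
\[
r\,\frac{h(t)}{t} - h(r) \;\leq\; r\,\frac{h(t)}{t} \;\leq\; t\,\frac{h(t)}{t} \;=\; h(t).
\]
Taking the supremum over $r>0$ then gives the claim with constant $c=1$.

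The main obstacle (such as it is) is only to make sure the hypothesis on $h$ is enough to guarantee $h(0)=0$ (or at least that $(h(r)-h(0))/r$ is monotone), so that the ratio argument applies; this follows from $h$ being increasing and convex on $[0,\infty)$ with $h(0)\geq 0$, and one may reduce to $h(0)=0$ by replacing $h$ with $h-h(0)$ (which only decreases $\wt{h}$). An alternative, essentially equivalent, route would be to use differentiability: at the point $r^*\leq t$ where $h'(r^*)=h(t)/t$ (which exists by convexity and $h(t)/t\leq h'(t)$), the supremum is attained and equals $r^*\,h(t)/t - h(r^*)\leq h(t)$; this version gives the same constant but needs mild regularity of $h$, which is available under \eqref{ig-sg}.
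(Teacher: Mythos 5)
Your proof is correct; the paper does not prove this lemma at all (it is cited from the literature), and your argument via the definition of the conjugate, the monotonicity of $r\mapsto h(r)/r$, and the split at $r=t$ is exactly the standard one, yielding the clean constant $c=1$. The only caveat is that the statement genuinely needs $h(0)=0$ (for $h(0)>0$ the claim fails with a uniform constant as $t\to0$, and the reduction "replace $h$ by $h-h(0)$" does not quite repair $\wt{h}(h(t)/t)$ since $h(t)/t>h_0(t)/t$); this is harmless here because every function the lemma is applied to in the paper is a Young function vanishing at the origin.
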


\begin{lem}\label{g<lambda} Suppose $H$ is an increasing and convex function $H\in C^1(0,\infty)$ satisfying $\Delta_2$-condition. If $h(t)=H'(t)$, then there exists a constant $c$, such that for every $t>0$ and $\lambda>1$, we have
\[h(\lambda t)\leq s_H \lambda^{s_H-1} h(t).\]
\end{lem}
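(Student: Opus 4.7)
The plan is to convert the supremum condition defining $s_H$ into a differential inequality, integrate to obtain a power-type bound on $H$ itself, and then use convexity to transfer this bound from $H$ back to $h=H'$.

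First I would observe that the definition $s_H=\sup_{t>0} th(t)/H(t)$ together with $h=H'$ yields the pointwise inequality
\[
\frac{d}{dt}\log H(t) \;=\; \frac{h(t)}{H(t)} \;\leq\; \frac{s_H}{t},\qquad t>0.
\]
Integrating between $t$ and $\lambda t$ for $\lambda>1$ produces the classical power-type upper bound $H(\lambda t)\leq \lambda^{s_H}H(t)$, which is essentially the content of \eqref{B-power-compar}.

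Next I would rearrange the defining bound as $h(s)\leq s_H H(s)/s$ and apply it at $s=\lambda t$, chaining with the previous step:
\[
h(\lambda t) \;\leq\; s_H\,\frac{H(\lambda t)}{\lambda t} \;\leq\; s_H\,\lambda^{s_H-1}\,\frac{H(t)}{t}.
\]
The final step is to replace $H(t)/t$ on the right-hand side by $h(t)$. By convexity of $H$ together with the standard Young-function normalisation $H(0)=0$, the mean value theorem gives some $\xi\in(0,t)$ with $H(t)/t=h(\xi)$, and monotonicity of $h=H'$ (convexity of $H$ again) yields $h(\xi)\leq h(t)$. Stringing the three displays together delivers exactly $h(\lambda t)\leq s_H\lambda^{s_H-1}h(t)$.

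I do not expect a serious obstacle; the only point worth flagging is that the lemma is stated abstractly, so the inequality $H(t)/t\leq h(t)$ tacitly uses $H(0)=0$. Within the Orlicz framework of the paper this is automatic (and is also forced by $s_H<\infty$ together with $H$ being nontrivial and convex), but it should be noted explicitly because without it the comparison between $H(t)/t$ and $h(t)$ in the last step would break down.
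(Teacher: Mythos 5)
Your proposal is correct and follows essentially the same route as the paper: bound $h(\lambda t)\leq s_H H(\lambda t)/(\lambda t)$, use the power-type comparison $H(\lambda t)\leq \lambda^{s_H}H(t)$ (the paper cites the monotonicity of $t\mapsto H(t)/t^{s_H}$, you re-derive it by integrating $h/H\leq s_H/t$ — the same fact), and finish with $H(t)/t\leq h(t)$ from convexity and $H(0)=0$. Your explicit flagging of the $H(0)=0$ normalisation is a fair point that the paper's terse proof leaves implicit, but it is not a substantive difference.
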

\begin{proof}Indeed, since $H\in  \Delta_2$, we have $h(t)\leq s_H H(t)/t$. On the other hand, ${H(\lambda t)}/{(\lambda t)^{s_H}}\leq    {H(t)}/{ t ^{s_H}}$ and $\Delta_2$-condition implies that $t\mapsto H(t)/t^{s_H}$ is non-increasing. 
\end{proof}

\bibliographystyle{plain}
\bibliography{ICgradest}

\end{document}